\newtheorem{theorem}{Theorem}[section]
\newtheorem{proposition}[theorem]{Proposition}
\newtheorem{corollary}[theorem]{Corollary}
\newtheorem{remark}[theorem]{Remark}
\newtheorem{lemma}[theorem]{Lemma}
\newtheorem{definition}[theorem]{Definition}
\numberwithin{equation}{section}
\newcommand{\R}{\mathbb{R}}
\newcommand{\N}{\mathbb{N}}
\newcommand{\Z}{\mathbb{Z}}
\newcommand{\D}{\mathbb{L}}
\renewcommand{\P}{\mathcal{P}}
\newcommand{\C}{\mathbb{C}}
\newcommand{\A}{\mathcal{A}}
\newcommand{\Y}{\mathbf{Y}}
\newcommand{\F}{\mathcal{F}}
\newcommand{\X}{\mathbf{X}}
\newcommand{\T}{\mathbb{T}}
\renewcommand{\epsilon}{\varepsilon}
\renewcommand{\phi}{\varphi}
\begin{document}

\title{Interpolation of Holomorphic functions}


\author{Pablo Jim\'enez-Rodr\'iguez}

\begin{abstract} 
Interpolation Theory gives techniques for constructing spaces from two initial Banach spaces. We provide several conditions under which the restriction of a holomorphic map $f:X_0+X_1 \rightarrow Y_0+Y_1$ to the interpolated spaces (using some specific interpolation methods), where $f_{|X_0}:X_0 \rightarrow Y_0$ is compact, is also compact and holomorphic. 
\end{abstract}

\maketitle

\section{Introduction and preliminaries}

Interpolation theory has proved to be a very important area of study within Functional Analysis and has provided a rich variety of new techniques when studying Banach spaces in general, and $L_p$ spaces in particular. We refer the interested reader to \cite{Be} and the references therein for a complete introduction to this theory. Interpolation theory is still a very fruitful field of research, and one can consult the references \cite{ArMo,AsKruMas,CamFerManMaNa,CoDo,CoSe,Fer,HaRo, MasSin,Qiu} for a sample of recent papers published in this area. \\

Let $\mathcal{C}$ be the class of all compatible couples of Banach spaces (that is, those pairs $(X_0,X_1)$ of Banach spaces for which there exists a topological vector space $\A$ so that $X_0 \subseteq \A \supseteq X_1$ as subspaces). For $(X_0,X_1) \in \mathcal{C}$, we can endow $X_0 \cap X_1$ with the norm 
$$
\|x\|_{X_0 \cap X_1}=\max\{\|x\|_{X_0},\|x\|_{X_1}\},
$$
and $(X_0 \cap X_1,\| \cdot \|_{X_0 \cap X_1})$ is a Banach space. \\

Similarly, we may define a norm on $X_0+X_1$ by 
$$
\|x\|_{X_0+X_1}=\inf\{\|x_0\|_{X_0}+\|x_1\|_{X_1} \, : \, x=x_0+x_1 , \, x_i \in X_i \},
$$
and $(X_0+X_1,\| \cdot \|_{X_0+X_1})$ is also a Banach space (see, for instance, \cite{Be}, lemma 2.3.1, p. 24). \\

An {\bf interpolation method} (or {\bf functor}) is a function $F$ that gives, for any pair $(X_0,X_1)$ in the class $\mathcal{C}$, a Banach space $F(X_0,X_1)$ so that
$$
X_0 \cap X_1 \subseteq F(X_0,X_1) \subseteq X_0+X_1,
$$
and all the inclusions are continuous. \\

Given two compatible couples of Banach spaces $(X_0,X_1)$ and $(Y_0,Y_1)$, we will denote $f:(X_0,X_1) \rightarrow (Y_0,Y_1)$ to refer to a function $f:X_0+X_1 \rightarrow Y_0+Y_1$ so that, for $j=0,1$, $f(X_j) \subseteq Y_j$. All the theorems to appear will be interpreted differently depending on the interpolation method that will be taken into consideration. \\

Instead of studying properties of the spaces that arise when using an interpolation method, interpolation theory often tries to study the extent to which properties of linear functionals on the extremal spaces ($X_0$ and $X_1$) are maintained in the interpolated spaces (that is, the spaces that appear when the interpolation method is applied). In the second half of the 20$^{th}$ century, some authors started to consider other properties of functions, obtaining interpolation results for invertible functions or for compact linear operators, which is the focus of the results to come. The question that will interest us is to what extent compactness for a linear operator on one of the extremal spaces ($X_0$ or $X_1$) is enough to guarantee compactness of the operator on the interpolated spaces. \\

Despite the amount of work this question has motivated, it remains unsolved. Yet, it keeps attracting the attention of mathematicians, and some partial answers have been given. One of the first results on interpolation of compact operators dealt with the concrete case of $L_p$ spaces (Krasnolsel'skii, \cite{Kras}), and in 1964 the first abstract results of this kind appeared (Lions and Peetre, \cite{LiPe}, and Calder\'on, \cite{Calderon}). Cwikel studied the problem in the particular case where the interpolation method that is considered is the {\it classical real interpolation method} (see, for example, \cite{Cwi}. The interested reader can also refer to the papers by F. Cobos, D.E. Edmunds and A.J.B. Potter, \cite{CoEdPo}, or Cobos and Peetre, \cite{CoPe}). \\

With respect to the complex interpolation method (which we will introduce in Definition \ref{Cal}), Cwikel, N. Krugljak and M. Mastylo proved in 1996 (\cite{CwiKrugMas}) that the problem of whether compactness of an operator between Banach couples extends to the interpolated spaces (if the operator is compact in one of the extremal spaces) can be reduced to the case where the spaces $Y_0, \, Y_1$ and $X_0$ are reflexive and $X_0$ is compactly embedded into $Y_0$. In \cite{CwiKal}, Cwikel and Kalton completely solved the problem for the particular cases where the Banach couple $(X_0,X_1)$ is a couple of Banach lattices of measurable functions or when $X_0$ is a UMD-space (without extra conditions on the couple $(Y_0,Y_1)$). In 2010, Cwikel proved (\cite{Cwi2}) that the compactness of the operator over the interpolated spaces is guaranteed when $(Y_0,Y_1)$ is a couple of complexified Banach lattices of measurable functions on a common measure space, if one of the following conditions is satisfied (without extra conditions on the couple $(X_0,X_1)$):
\begin{enumerate}
\item $Y_0$ or $Y_1$ has absolutely continuous norm, or
\item $Y_0$ and $Y_1$ have the Fatou property.
\end{enumerate}

The results of this paper will focus not only on linear operators, but on homogeneous polynomials and on holomorphic functions (whose definition we will recall in Definition \ref{holomorphic}). \\

Before dealing with the definitions that we will need in the concrete topic of interpolation, we will need to introduce the theory of Fourier Series, in the more general setting of functions defined on Banach spaces:
\begin{definition}[\cite{CwiKal}] \label{Fourier}
Let $X$ be a Banach space (over $\C$) and $f \in L^2(\T,X)$, where $\T=\{|z|=1\}$. For $k$ in $\Z$, define the {\bf $k^{th}$ Fourier coefficient} as
$$
\hat{f}(k)={1 \over 2\pi i}\int_0^{2\pi}e^{-ikt}f(e^{it})dt.
$$
The {\bf Fourier Transform} is defined to be
$$
\begin{array}{crcl}
\mathfrak{F}:& L^2(\T,X) & \longrightarrow & \ell^2(X) \\
& f & \mapsto & \{\hat{f}(k)\}_{k=1}^{\infty},
\end{array}
$$
and it is a linear isometry.
\end{definition}

We will also make use of the theory of holomorphic functions defined on Banach spaces (over $\C$). 

\begin{definition}[\cite{ArScho}] \label{holomorphic}
Let $X$ and $Y$ be Banach spaces over $\C$ and let $f:X \rightarrow Y$ be a function. We say that $f$ is {\bf holomorphic} if, for every $x \in X$, there exists a radius $r>0$ and a sequence of continuous polynomials $\{P_mf(x):X \rightarrow Y\}_{m=0}^{\infty}$ so that $P_mf(x)$ is $m-$homogeneous and, for every $y$ with $\|x-y\|_X<r$, we can write
$$
f(y)=\sum_{m=0}^{\infty}P_mf(x)(y-x),
$$
where the convergence is uniform on every compact subset of $B^{\|\cdot\|_{X}}(x,r)$, or, equivalently, in one particular ball $B^{\|\cdot\|_X}(x,r')$. \\
The series $\sum_{m=0}^{\infty}P_mf(x)(y-x)$ is called the {\bf Taylor series} around $x$. We will also denote $\mathcal{H}(X;Y)=\{f:X \rightarrow Y \, : \, f \text{ is holomorphic}\}.$ \\

Notice that the polynomials given by the Taylor series can be calculated by means of the $m^{th}$ Fr\'echet derivative
$$
P_mf(x)={d^mf(x) \over m!}.
$$
For a holomorphic function $f:X \rightarrow Y$, we will denote the {\bf radius of convergence} of $f$ at $x \in X$ as
$$
R^c(x)=\sup\bigg\{R>0 \, : \, f(y)=\sum_{m=1}^{\infty}{d^mf(x) \over m!}(y-x) \text{ uniformly for }\|y-x\|_X \leq r<R \bigg\}.
$$
We will make use of \cite{Mu}, Theorem 12, where it is shown that $R^c(x)=R^b(x)=\sup \{r>0 \, : \, f(B(x,r)) \text{ is bounded}\}.$

\end{definition}

\begin{definition}[\cite{ArScho}]
Let $X$ and $Y$ be two Banach spaces and $f:X \rightarrow Y$ be a holomorphic mapping. We say that $f$ is {\bf compact} if, for every $x \in X$, there exists $r_x>0$ so that $f(B(x,r_x))$ is relatively compact in $Y$.
\end{definition}

In \cite{ArScho}, Proposition 3.4, the authors give a characterization of a compact holomorphic mapping in terms of the polynomials of its Taylor series. More concretely, it is shown that if $f:X \rightarrow Y$ is holomorphic, then it is compact if and only if ${d^mf(x) \over m!}$ is compact, for every $x$ in $X$ and natural number $m$. \\

Since we are dealing with polynomials, it will also be useful to recall the definition of the polar of a homogeneous polynomial:
\begin{definition} \label{polar}
Let $P:X \rightarrow Y$ be a homogeneous polynomial of degree $m$. Then there exists a unique multilinear symmetric form, the {\bf polar} of $P$, denoted by $\tilde{P}:\stackrel{m}{\overbrace{X \times \ldots \times X}} \rightarrow Y$, so that $P(x)=\tilde{P}(\stackrel{m}{\overbrace{x, \ldots,x}})$. Furthermore, the {\bf polarization identity} gives a very precise formula for recovering the polar from the polynomial:
$$
\tilde{P}(\stackrel{m}{\overbrace{x_1, \ldots ,x_m}})={1 \over 2^mm!} \sum_{\epsilon_1, \ldots, \epsilon_m= \pm 1}\epsilon_1 \cdot \ldots \cdot \epsilon_mP(\epsilon_1x_1+\ldots +\epsilon_mx_m).
$$
For a polynomial $P:X \rightarrow Y$, let us denote $\|P\|_{X \rightarrow Y}=\sup \{\|Px\|_{Y} \, : \, \|x\|_X \leq 1 \}$ and, for a multilinear $n$-form $\|L\|_{X \rightarrow Y}=\sup \{\|L(x_1, \ldots x_n)\|_{Y} \, : \, \|x_i\|_X \leq 1, \, 1 \leq i \leq n \}$. \\
In the future, we will denote, for a Banach space $X$, $B_X=\{x \in X \, : \, \|x\|_X \leq 1\}$, if there is no mistake about what norm we are using. \\
Trivially, $\|P\|_{X \rightarrow Y} \leq \|\tilde{P}\|_{X \rightarrow Y}$. Reciprocally, Martin's theorem (\cite{Di}, Proposition 1.8, p. 10) allows us to write, for a $m$-homogeneous polynomial $P$,
$$
\|\tilde{P}\|_{X \rightarrow Y} \leq {m^m \over m!}\|P\|_{X \rightarrow Y}.
$$
We will denote by $\mathcal{P}(^nX)$ the space of all homogeneous polynomials of degree $n$ from $X$ to $\C$ and $\mathcal{L}(^nX)$ the space of all multilinear $n$-symmetric forms from $X$ to $\C$. Given two compatible couples of Banach spaces $(X_0,X_1), \, (Y_0,Y_1) \in \mathcal{C}$ we will also denote by $\mathcal{P}(^n(X_0,X_1),(Y_0,Y_1))$ the space of all homogeneous polynomials $P:X_0+X_1 \rightarrow Y_0+Y_1$ so that $P$ is homogeneous of degree $n$ and $P:X_0+X_1 \rightarrow Y_0+Y_1, \,P:X_j \rightarrow Y_j \, (j=0,1)$ are continuous.

\end{definition}

Calder\'on asked in \cite{Calderon} the following analogous problem for bilinear operators: if we are given compatible couples of Banach spaces $(X_0^{(1)}, X_1^{(1)}), \, (X_0^{(2)}, X_1^{(2)})$ and $(Y_0,Y_1)$ and we consider a {\it bilinear} operator 
$$
T:(X_0^{(1)} + X_0^{(2)}) \times (X_1^{(1)} + X_1^{(2)}) \rightarrow Y_0 \cap Y_1
$$ 
that satisfies that $T(X_0^{(j)} \times X_1^{(j)}) \subseteq Y_j$ is bounded for $j=0,1$ and $T_{|X_0\times X_0}$ is compact, can we guarantee that $T_{F(X_0 \times X_0,X_1 \times X_1)}$ is compact, for a certain interpolation functor $F$? We refer to \cite{Be}, page 96, for the corresponding definitions for multilinear operators. Fern\'andez and da Silva (\cite{FerSil}) studied some particular cases under the real method, and recently in 2017, Fern\'andez-Cabrera and Mart\'inez (\cite{FerMar}) studied how the real method worked with a function parameter, and studied also the complex method. We would like to stress that, even though we will be working with polynomials, the questions concerning compact multilinear operators and compact polynomials will be analogous.\\

Let next $\D$ denote the set $\{ z \in \mathbb{C} \, : \, 1 < |z| < e\}$ and, for a compatible couple of Banach spaces $(X_0,X_1)$, define the function space $\F\{X_0,X_1\}$ as follows:
$$
\begin{aligned}
\F\{X_0,X_1\}=\big\{\phi:\overline{\D} \rightarrow X_0+X_1 \text{ so that } & \phi \in \mathcal{H}(\D,X_0+X_1), \\
& \phi:\{|z|=e^{j}\} \rightarrow X_j \text{ continuously}, \, j=0,1 \big\}.
\end{aligned}
$$ 

This space is a Banach space if given the norm 
$$
\|\phi\|_{\F\{X_0,X_1\}}=\max\left\{\max\{\|\phi(e^{it})\|_{X_0},\|\phi(e^{1+it})\|_{X_1}\} \, : \, t \in [0,2\pi] \right\}.
$$

Notice that, if $\phi \in \F\{X_0,X_1\}$, then automatically $\phi \in L^2(\T,X_0+X_1)$.

\begin{definition}[\cite{CwiKal}] \label{Cal}
Define the {\bf Calder\'on Complex Interpolation method} as the functor that associates to each value $0< \theta<1$ the intermediate space
$$
[X_0,X_1]_{\theta}=\{\phi(e^{\theta}) \, : \, \phi \in \F\{X_0,X_1\}\},
$$
which is a Banach space if endowed with the norm
$$
\|x\|_{[X_0,X_1]_{\theta}}=\inf\{\|\phi\|_{\F\{X_0,X_1\}} \, : \, x=\phi(e^{\theta}) \text{ for some }\phi \in \F\{X_0,X_1\}\}.
$$
In the following, if there is no confusion about which spaces are to be interpolated,  we will denote $\mathbf{X}_{\theta}=[X_0,X_1]_{\theta}$.
\end{definition}

\begin{definition}[\cite{CwiKal}]
For each $0<\theta<1$, define the {\bf Peetre Interpolation method} as the method which proceeds as follows: For each value $0<\theta<1$,
$$
\begin{aligned}
\left<X_0,X_1 \right>_{\theta}=\left\{x=\sum_{k \in \Z}x_k \, :\right. & \, x_k \in X_0 \cap X_1   \\
& \left.\text{ and }\sum_{k \in \Z}e^{(j-\theta)k}x_k \text{ is unconditionally convergent in }X_j, \, j=0,1 \right\}.
\end{aligned}
$$
This space is a Banach space, endowed with the norm
$$
\|x\|_{\left<X_0,X_1 \right>_{\theta}}=\inf \max_{j=0,1} \sup \bigg\|\sum_{k \in \Z}\lambda_ke^{(j-\theta)k}x_k \bigg\|_{X_j},
$$
where the supremum is taken over all complex valued $(\lambda_k)_{k=-\infty}^{\infty}$ with $|\lambda_k| \leq 1$ for all $k$, and the infimum is taken over all representations $x=\sum_{k \in \Z}x_k$ as above.

\end{definition}

With the Calder\'on complex method, we have the following classical Interpolation Theorem, due to Riesz and Thorin:

\begin{theorem}[M. Riesz, G.O. Thorin, \cite{Be}, p. 2]
Let $0 < p_i, q_i<\infty, \, 0<\theta<1$. Assume $p_0 \neq p_1$ and $q_0 \neq q_1$ and define $p, \, q$ by
$$
{1 \over p}={1-\theta \over p_0}+{\theta \over p_1}, \, {1 \over q}={1-\theta \over q_0}+{\theta \over q_1}.
$$
Assume that 
$$
T:L_{p_0}(U,d\mu)\rightarrow L_{q_0}(V,d\nu) \quad \text{and} \quad T:L_{p_1}(U,d\mu) \rightarrow L_{q_1}(V,d\nu)
$$
are linear operators bounded by $M_0$ and $M_1$, respectively. \\
Then, 
$$
T:L_{p}(U,d\mu) \rightarrow L_{q}(V,d\nu)
$$
is bounded and continuous with norm $M \leq M_0^{1-\theta}M_1^{\theta}$. \\
We remark that if $d\mu=d\nu$ we have $[L_{p_0}(A_0),L_{\infty}(A_1)]_{\theta}=L_p([A_0,A_1]_{\theta})$, for every compatible couple of Banach spaces $A_0, \, A_1$.
\end{theorem}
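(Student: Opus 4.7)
The classical path here is through Hadamard's three-lines lemma applied to an analytic family of functions that interpolates between $f,g$ at the center of a strip and suitably modified versions on the boundary. First, by density of simple functions in $L_p,L_{q'}$ (using $p_i,q_i<\infty$, whence $p,q<\infty$) and by the duality $\|Tf\|_q=\sup\{|\int_V(Tf)g\,d\nu|:g\text{ simple},\,\|g\|_{q'}\le 1\}$, the task reduces to producing, for any simple $f=\sum_k|a_k|e^{i\alpha_k}\chi_{A_k}$ with $\|f\|_p\le 1$ and any simple $g=\sum_j|b_j|e^{i\beta_j}\chi_{B_j}$ with $\|g\|_{q'}\le 1$, the single scalar bound $|\int_V T(f)\,g\,d\nu|\le M_0^{1-\theta}M_1^{\theta}$.

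Setting $P(z)=(1-z)/p_0+z/p_1$ and $Q(z)=(1-z)/q_0'+z/q_1'$, so that $P(\theta)=1/p$ and $Q(\theta)=1/q'$, the plan is to introduce the entire simple-function-valued families
$$
f_z=\sum_k|a_k|^{pP(z)}e^{i\alpha_k}\chi_{A_k},\qquad g_z=\sum_j|b_j|^{q'Q(z)}e^{i\beta_j}\chi_{B_j}
$$
on the closed strip $\overline{S}=\{0\le\re z\le 1\}$, with the convention $0^w:=0$, so that $f_\theta=f$ and $g_\theta=g$. Because each $f_z$ is a finite linear combination of indicators, $f_z\in L_{p_0}\cap L_{p_1}$ and $T(f_z)$ is unambiguously defined.

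Next, define $F(z)=\int_V T(f_z)\,g_z\,d\nu$. Finiteness of the sums and linearity of $T$ make $F$ entire and bounded on $\overline{S}$. On the line $\re z=0$ a direct computation gives $|f_{it}|^{p_0}=|f|^p$ pointwise, hence $\|f_{it}\|_{p_0}\le 1$, and likewise $\|g_{it}\|_{q_0'}\le 1$; H\"older combined with the $L_{p_0}\to L_{q_0}$ hypothesis yields $|F(it)|\le M_0\|f_{it}\|_{p_0}\|g_{it}\|_{q_0'}\le M_0$. The parallel computation on $\re z=1$ produces $|F(1+it)|\le M_1$, so Hadamard's three-lines lemma delivers $|F(\theta)|\le M_0^{1-\theta}M_1^{\theta}$, which is exactly the required estimate since $F(\theta)=\int_V T(f)\,g\,d\nu$.

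The only genuinely delicate point is the design of the exponents $pP(z),q'Q(z)$: they are engineered precisely so that the boundary moduli land in $L^{p_0},L^{q_0'}$ on $\re z=0$ and in $L^{p_1},L^{q_1'}$ on $\re z=1$, while the center $z=\theta$ reproduces the original pairing; once this is in place, the holomorphy and boundedness of $F$ are routine. The concluding remark $[L_{p_0}(A_0),L_\infty(A_1)]_\theta=L_p([A_0,A_1]_\theta)$ is a separate identification of the Calder\'on functor on $L^p$-valued Banach couples; following Definition \ref{Cal}, it is verified by associating to each intermediate element an admissible function on the strip with the prescribed pointwise values, and I would cite this directly from Bergh--L\"ofstr\"om rather than reprove it here.
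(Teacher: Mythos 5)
This theorem is not proved in the paper at all: it is quoted as a classical result from \cite{Be} (p.~2), so there is no internal argument to compare yours against. Your proposal is the standard Thorin complex-variable proof and it is essentially correct: the exponent functions $P(z)$, $Q(z)$ are the usual ones, the boundary identities $|f_{it}|^{p_0}=|f|^{p}$ and $|g_{it}|^{q_0'}=|g|^{q'}$ (and their analogues on $\re z=1$) give $\|f_{it}\|_{p_0}\le 1$, $\|g_{it}\|_{q_0'}\le 1$, and $F(z)=\int_V T(f_z)g_z\,d\nu$ is a finite sum of terms $c_{kj}\,|a_k|^{pP(z)}|b_j|^{q'Q(z)}$, hence entire and bounded on the closed strip, so the three-lines lemma yields $|F(\theta)|\le M_0^{1-\theta}M_1^{\theta}$ and the duality over simple functions converts this into $\|Tf\|_q\le M_0^{1-\theta}M_1^{\theta}$ for simple $f$, with the passage to general $f\in L_p$ by density being routine. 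Two caveats are worth recording. First, the duality step and the use of $\|\cdot\|_p$, $\|\cdot\|_{q'}$ as norms require $p_i,q_i\ge 1$; the statement as printed allows $0<p_i,q_i<\infty$, so your argument covers the classical range $1\le p_i,q_i<\infty$ (which is the range actually treated in \cite{Be}) but not sub-unit exponents, where $L_q$ is only quasi-normed and the dual pairing argument breaks down. Second, your decision to cite rather than reprove the identification $[L_{p_0}(A_0),L_{\infty}(A_1)]_{\theta}=L_p([A_0,A_1]_{\theta})$ is appropriate: that remark concerns the vector-valued Calder\'on functor and is logically independent of the scalar interpolation inequality you establish; it is likewise only quoted, not proved, in the paper.
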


More generally, we will make use of the following generalization of the Interpolation Theorem for linear operators, which can be consulted in \cite{Be} (theorem 4.4.1, p. 96):

\begin{theorem} \label{multilinear}
Let $(X_0^{(j)},X_1^{(j)})_{j=1}^n, \, (Y_0,Y_1)$ be compatible couples of Banach spaces. Assume that 
$$
T:(X_0^{(1)} \cap X_1^{(1)}) \times \cdots \times (X_0^{(n)} \cap X_1^{(n)}) \rightarrow Y_0 \cap Y_1
$$
is an $n-$multilinear form and satisfies $\|T(x_1, \ldots ,x_n)\|_{Y_j} \leq M_j$, for $x_l \in B_{X_j^{(l)}}$, $1 \leq l \leq n, \, j=0,1$. \\
Then, for any $0<\theta<1$, $T$ can be uniquely extended to a multilinear mapping 
$$
T:E:=[X_0^{(1)},X_1^{(1)}]_{\theta} \times \cdots \times [X_0^{(n)} , X_1^{(n)}]_{\theta} \rightarrow [Y_0,Y_1]_{\theta},
$$
with $\|T\|_{E \rightarrow [Y_0,Y_1]_{\theta}} \leq M_0^{1-\theta}M_1^{\theta}$.
\end{theorem}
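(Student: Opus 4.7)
The plan is to adapt Hadamard's three-lines argument---the classical proof of the Riesz--Thorin theorem---to the multilinear setting. It is cleanest to reparameterize by the strip $S=\{w\in\C:0<\mathrm{Re}(w)<1\}$ via $z=e^w$, under which $\phi\in\F\{X_0,X_1\}$ corresponds to a $2\pi i$-periodic holomorphic $\psi(w)=\phi(e^w)$ on $\overline S$ with $\phi(e^\theta)=\psi(\theta)$; the strip and annular formulations give identical interpolation spaces and norms.

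Since $X_0^{(l)}\cap X_1^{(l)}$ is dense in $[X_0^{(l)},X_1^{(l)}]_\theta$ (a standard feature of Calder\'on's method), I first reduce to the case $x_l\in X_0^{(l)}\cap X_1^{(l)}$; the unique extension of $T$ to all of $E$ then follows from the resulting estimate by density and continuity. Fix $\epsilon>0$, pick $\phi_l\in\F\{X_0^{(l)},X_1^{(l)}\}$ with $\phi_l(e^\theta)=x_l$ and $\|\phi_l\|_\F\le\|x_l\|_{[X_0^{(l)},X_1^{(l)}]_\theta}+\epsilon$, and by a further density step replace them by Laurent polynomials with coefficients in the intersection, so that $\phi_l(z)\in X_0^{(l)}\cap X_1^{(l)}$ for every $z\in\overline\D$ and $T(\phi_1(z),\ldots,\phi_n(z))\in Y_0\cap Y_1$ is well-defined. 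Setting $\psi_l(w)=\phi_l(e^w)$, the key auxiliary function is
\begin{equation*}
F(w)=M_0^{w-\theta}M_1^{\theta-w}\,T\bigl(\psi_1(w),\ldots,\psi_n(w)\bigr),
\end{equation*}
holomorphic on $S$, continuous on $\overline S$, and bounded. Direct computation using the multilinear bounds for $T$ yields
\begin{equation*}
\|F(it)\|_{Y_0},\ \|F(1+it)\|_{Y_1}\ \le\ M_0^{1-\theta}M_1^\theta\prod_{l=1}^n\|\phi_l\|_\F,
\end{equation*}
while $F(\theta)=T(x_1,\ldots,x_n)$. The definition of the interpolation norm then gives $\|T(x_1,\ldots,x_n)\|_{[Y_0,Y_1]_\theta}\le M_0^{1-\theta}M_1^\theta\prod_l(\|x_l\|_{[X_0^{(l)},X_1^{(l)}]_\theta}+\epsilon)$, and sending $\epsilon\to 0$ gives the desired bound.

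The main obstacle is technical: the scalar prefactor $M_0^{w-\theta}M_1^{\theta-w}$ is not $2\pi i$-periodic in $w$, so $F$ does not descend to a single-valued function on $\overline\D$ and is not literally an element of $\F\{Y_0,Y_1\}$ as defined in the paper. This forces the estimate to be carried out in the strip formulation and transferred via the equivalence of the two descriptions of Calder\'on's complex method. Ensuring that the $\phi_l$ are intersection-valued (through the Laurent-polynomial approximation) is a secondary but essential technical point, needed both to make $T(\phi_1(z),\ldots,\phi_n(z))$ meaningful throughout $\overline\D$ and to guarantee the required boundedness of $F$ on the strip.
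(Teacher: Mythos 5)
You should first be aware that the paper contains no proof of Theorem \ref{multilinear}: it is quoted verbatim from \cite{Be} (Theorem 4.4.1, p.~96), so your attempt can only be measured against the classical argument, and your outline is indeed that classical three-lines scheme (near-optimal representing functions, the auxiliary function $M_0^{w-\theta}M_1^{\theta-w}T(\psi_1(w),\ldots,\psi_n(w))$, boundary estimates, then the definition of the interpolation norm). Your two technical flags are the right ones to raise; on the first, note additionally that if the annulus and strip descriptions are only known to give \emph{equivalent} norms, the sharp constant $M_0^{1-\theta}M_1^{\theta}$ survives only if the whole argument is run in the strip setting, as in \cite{Be} -- so the cleanest course is to abandon the annulus picture for this theorem altogether rather than ``transfer'' an estimate into it.

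The genuine gap is the ``further density step.'' For $T(\phi_1(z),\ldots,\phi_n(z))$ to be defined and holomorphic you do need the $\phi_l$ to be analytic as maps into $X_0^{(l)}\cap X_1^{(l)}$, and Laurent polynomials with intersection coefficients (or Calder\'on's finite sums $\sum_k e^{\delta z^2+\lambda_k z}a_k$ in the strip) serve this purpose; moreover such functions are dense in $\F\{X_0^{(l)},X_1^{(l)}\}$, since the coefficients $\hat{\phi}(k)$ can be computed on either boundary circle and hence lie in the intersection. But density in $\F$ is not what your argument uses: you need a competitor \emph{in this special class} which hits $x_l$ at $e^{\theta}$ \emph{exactly} while its $\F$-norm exceeds $\|x_l\|_{[X_0^{(l)},X_1^{(l)}]_\theta}$ by at most $\epsilon$, and this does not follow from approximation in $\F$-norm. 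The de la Vall\'ee Poussin means $S_N\phi_l$ converge to $\phi_l$ in $\F$-norm but do not preserve the value at $e^{\theta}$; repairing the value by adding the constant $x_l-S_N\phi_l(e^{\theta})$ costs the $X_0^{(l)}\cap X_1^{(l)}$-norm of that error, which is not dominated by its $[X_0^{(l)},X_1^{(l)}]_\theta$-norm (weighted couples give easy counterexamples); and passing to the limit in $T\bigl(S_N\phi_1(e^{\theta}),\ldots,S_N\phi_n(e^{\theta})\bigr)$ instead would require continuity of $T$ with respect to the interpolation norms, i.e.\ exactly the statement being proved -- the hypotheses only give continuity of $T$ for the intersection norms. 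So, as written, the reduction is circular. To close it you must either prove (or cite) the lemma that for $a\in X_0\cap X_1$ the norm $\|a\|_{[X_0,X_1]_\theta}$ can be computed up to $\epsilon$ using intersection-valued analytic competitors passing exactly through $a$, or restructure the proof by extending $T$ by continuity from tuples $\bigl(\phi_1(e^{\theta}),\ldots,\phi_n(e^{\theta})\bigr)$ with $\phi_l$ special and then verifying separately that this extension agrees with the given $T$ on the product of the intersections -- a verification which meets the same difficulty and is where the actual work in the proof of \cite{Be}, Theorem 4.4.1 lies. The remaining ingredients of your outline (the boundary estimates, the three-lines step, letting $\epsilon\to 0$, and uniqueness of the extension by density of the intersections) are fine.
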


Calder\'on's method behaves very well with procedures like reiteration of the interpolation method. More concretely, the result below can be also found in \cite{Be}, Theorem 4.2.2, p. 91:

\begin{theorem} \label{Ber}
Let $0 \leq \theta \leq 1$. Then 
\begin{enumerate}
\item[a)] $X_0 \cap X_1$ is dense in $[X_0,X_1]_{\theta}$ (using the $\|\cdot\|_{\theta}$-norm).
\item[b)]
$$
[X_0,X_1]_{\theta}=\Big[\overline{X_0 \cap X_1}^{\| \cdot \|_0},X_1\Big]_{\theta}=\Big[X_0,\overline{X_0\cap X_1}^{\| \cdot \|_1}\Big]_{\theta}=\Big[\overline{X_0 \cap X_1}^{\| \cdot \|_0},\overline{X_0\cap X_1}^{\| \cdot \|_1}\Big]_{\theta}.
$$
\item[c)]The space $[X_0,X_1]_{j}$ is a closed subspace of $X_j$ for $j=0,1$, with equality of norms.
\item[d)]
$$
[X_0,X_1]_{\theta}=\Big[[X_0,X_1]_{0},[X_0,X_1]_{1}\Big]_{\theta}.
$$
\end{enumerate}
\end{theorem}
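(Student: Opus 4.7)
The plan is to establish part (a) via a Laurent-series / Fej\'er-averaging approximation, and then deduce (b), (c), (d) from it together with a short ``test function'' trick for the boundary parameters. For (a), given $x\in\X_\theta$ and $\epsilon>0$, I would choose $\phi\in\F\{X_0,X_1\}$ with $\phi(e^\theta)=x$ and $\|\phi\|_{\F}<\|x\|_\theta+\epsilon$. Since $\phi$ is holomorphic on the annulus $\D$ and continuous on $\overline{\D}$, it admits a Laurent expansion $\phi(z)=\sum_{n\in\Z}a_n z^n$. Computing $a_n$ via contour integration on $|z|=1$ shows $a_n\in X_0$; doing the same on $|z|=e$ shows $e^n a_n\in X_1$, so in fact $a_n\in X_0\cap X_1$. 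The Fej\'er averages $\sigma_N\phi(z)=\sum_{|n|<N}(1-|n|/N)a_n z^n$ then take values in $X_0\cap X_1$ for every $z$, and on the two boundary circles reduce to the Fej\'er sums of the continuous maps $t\mapsto\phi(e^{it})\in X_0$ and $t\mapsto\phi(e^{1+it})\in X_1$, which converge uniformly in the respective norms. Thus $\sigma_N\phi\to\phi$ in $\F\{X_0,X_1\}$, and $\sigma_N\phi(e^\theta)\in X_0\cap X_1$ approximates $x$ in the $\|\cdot\|_\theta$-norm.

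Part (b) then follows immediately: continuous embedding gives one inclusion, and for the other the approximants $\sigma_N\phi$ above already lie in $\F\{\overline{X_0\cap X_1}^{\|\cdot\|_0},X_1\}$ with unchanged norm (their boundary values on $|z|=1$ already lie in $X_0\cap X_1$), so being Cauchy in that smaller complete space they converge there to some $\psi$ with $\psi(e^\theta)=x$ and $\|\psi\|_\F\le\|\phi\|_\F$; the other two equalities are treated identically.

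For (c), $\|x\|_{X_j}\le\|x\|_{\X_j}$ is immediate from $\|\phi(e^j)\|_{X_j}\le\|\phi\|_\F$. The reverse direction uses the test function $\phi_n(z)=z^{-n}x$ (for $j=0$) or $\phi_n(z)=z^{n}x/e^n$ (for $j=1$) applied to $x\in X_0\cap X_1$: for $j=0$, one has $\phi_n(1)=x$, $\|\phi_n(e^{it})\|_{X_0}=\|x\|_{X_0}$ and $\|\phi_n(e^{1+it})\|_{X_1}=e^{-n}\|x\|_{X_1}$, so for $n$ large enough $\|\phi_n\|_\F=\|x\|_{X_0}$, giving $\|x\|_{\X_0}\le\|x\|_{X_0}$. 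Combining with the Fej\'er density from (a) and completeness of $\X_j$, one identifies $\X_j$ with $\overline{X_0\cap X_1}^{\|\cdot\|_j}$, a closed subspace of $X_j$ with equal norm. Part (d) is then automatic: by (c), $\F\{\X_0,\X_1\}\subseteq\F\{X_0,X_1\}$ with equal norms, giving one inclusion after evaluating at $e^\theta$; conversely, every $\phi\in\F\{X_0,X_1\}$ has boundary values in $\overline{X_0\cap X_1}^{\|\cdot\|_j}=\X_j$ (again by Fej\'er), so $\phi\in\F\{\X_0,\X_1\}$ with unchanged norm.

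The main technical obstacle is the Fej\'er-convergence step: one must show that convolution with the Fej\'er kernel is an approximate identity on $C(\T;X_j)$ for each of the Banach spaces $X_j$. This is the standard scalar proof transferred to the Bochner-integrable setting, using uniform continuity of the boundary function on the compact circle; once it is in place, the rest of the theorem reduces to the norm inequalities, the one-line $z^{\pm n}$ trick, and the density arguments above.
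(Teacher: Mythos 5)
This theorem is not proved in the paper at all: it is quoted verbatim from Bergh--L\"ofstr\"om (\cite{Be}, Theorem 4.2.2), so there is no internal proof to compare against; your argument should be measured against the standard one, transplanted to the annulus picture used here. In that respect your plan is the natural one (and matches the smoothing device the paper itself uses later via the de la Vall\'ee Poussin means $S_N$): the Laurent coefficients of $\phi\in\F\{X_0,X_1\}$, computed on the two boundary circles, do lie in $X_0\cap X_1$; vector-valued Fej\'er summation is a legitimate approximate identity on $C(\T,X_j)$; and parts (a), (b), (c) as you argue them are essentially correct, including the $z^{\pm n}$ test-function computation and the identification $[X_0,X_1]_j=\overline{X_0\cap X_1}^{\|\cdot\|_{X_j}}$ with equal norms via density plus completeness.

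There is, however, a genuine gap in the second inclusion of (d). You assert that any $\phi\in\F\{X_0,X_1\}$ belongs to $\F\{[X_0,X_1]_0,[X_0,X_1]_1\}$ ``with unchanged norm'' because its boundary values lie in $\overline{X_0\cap X_1}^{\|\cdot\|_{X_j}}$. But membership in $\F\{[X_0,X_1]_0,[X_0,X_1]_1\}$ demands more than the boundary conditions: by the definition of $\F$, the function must map all of $\overline{\D}$ into $[X_0,X_1]_0+[X_0,X_1]_1$ and be holomorphic on $\D$ \emph{as a map into that space}, whose norm is an infimum over fewer decompositions and may be strictly larger than the restriction of $\|\cdot\|_{X_0+X_1}$. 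Fej\'er convergence on the two boundary circles says nothing directly about the interior values or about holomorphy in the smaller sum space, so ``(again by Fej\'er)'' does not carry this step. The claim is true, but it needs an argument; the cheapest repair is to reuse your own part-(b) mechanism instead: the Laurent polynomials $\sigma_N\phi$ have coefficients in $X_0\cap X_1\subseteq[X_0,X_1]_0\cap[X_0,X_1]_1$, hence trivially lie in $\F\{[X_0,X_1]_0,[X_0,X_1]_1\}$; by (c) the two $\F$-norms agree on them, so they form a Cauchy sequence there with $\|\sigma_N\phi\|_{\F}\leq\|\phi\|_{\F}$; evaluating at $e^{\theta}$ and using completeness of $\big[[X_0,X_1]_0,[X_0,X_1]_1\big]_{\theta}$ together with the continuous embedding of both interpolation spaces into $X_0+X_1$ identifies the limit with $x=\phi(e^{\theta})$ and gives the required norm bound. (Alternatively one can show $\phi$ itself lies in the smaller $\F$-space via the two-contour Cauchy/Poisson representation of the annulus, but some such argument must be supplied.)
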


We will make use of the following result, proved by Lions and Peetre (\cite{LiPe}, ch. IV, Theorem 1.1, p. 29):

\begin{theorem} \label{Lions}
Let $X_0, \, X_1$ be Banach spaces and $0<\theta<1$. Then,
\begin{enumerate}
\item \label{part1} there exists a constant $C>0$ so that, for every $x \in X_0 \cap X_1$,
$$
\|x\|_{[X_0,X_1]_{\theta}} \leq C\|x\|_{X_0}^{1-\theta}\|x\|_{X_1}^{\theta}.
$$
\item \label{part2} There exists a constant $C'>0$ so that, for every $x \in [X_0,X_1]_{\theta}$ and $t>0$, one can find $x_0 \in X_0$ and $x_1 \in X_1$ satisfying:
$$
\begin{aligned}
x&=x_0+x_1, \\
\|x_0\|_{X_0} &\leq C't^{\theta}\|x\|_{[X_0,X_1]_{\theta}}, \\
\|x_1\|_{X_1} &\leq C't^{\theta-1}\|x\|_{[X_0,X_1]_{\theta}}.
\end{aligned}
$$
\end{enumerate}
\end{theorem}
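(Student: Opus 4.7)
The unifying idea is to produce, from the data, a function $\phi\in\F\{X_0,X_1\}$ with $\phi(e^\theta)=x$ whose two boundary norms can be read off directly, and then use these bounds either as the target norm (for part (1)) or as inputs to a Banach-valued Poisson reconstruction of $x$ at $e^\theta$ (for part (2)). Since the annulus $\D=\{1<|z|<e\}$ is not simply connected, the only single-valued power functions on $\D$ are $z^n$ with $n\in\Z$; this integer constraint is the source of the constants $C$ and $C'$, but costs only a factor of $e^{1/2}$ because any real exponent can be rounded to an integer.

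For part (1), we may assume $x\neq 0$. Set $\alpha^{*}=\log(\|x\|_{X_0}/\|x\|_{X_1})$, choose $n\in\Z$ with $|n-\alpha^{*}|\leq 1/2$, and define
$$
\phi(z)=e^{-n\theta}z^n x.
$$
Then $\phi\in\F\{X_0,X_1\}$, $\phi(e^\theta)=x$, $\|\phi(e^{it})\|_{X_0}=e^{-n\theta}\|x\|_{X_0}$, and $\|\phi(e^{1+it})\|_{X_1}=e^{n(1-\theta)}\|x\|_{X_1}$. Because $e^{-\alpha^{*}\theta}\|x\|_{X_0}=\|x\|_{X_0}^{1-\theta}\|x\|_{X_1}^{\theta}=e^{\alpha^{*}(1-\theta)}\|x\|_{X_1}$, both boundary norms are dominated by $e^{1/2}\|x\|_{X_0}^{1-\theta}\|x\|_{X_1}^{\theta}$, giving part (1) with $C=e^{1/2}$.

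For part (2), fix $\epsilon>0$ and choose $\phi\in\F\{X_0,X_1\}$ with $\phi(e^\theta)=x$ and $\|\phi\|_{\F\{X_0,X_1\}}\leq(1+\epsilon)\|x\|_{[X_0,X_1]_\theta}$. The key tool is a Banach-valued Poisson formula for the annulus: there exist nonnegative kernels $P_0(\theta,\cdot),P_1(\theta,\cdot)$ on $[0,2\pi]$ with $\int_0^{2\pi}(P_0+P_1)\,ds=1$, such that for every $\psi\in\F\{X_0,X_1\}$
$$
\psi(e^\theta)=\int_0^{2\pi}P_0(\theta,s)\psi(e^{is})\,ds+\int_0^{2\pi}P_1(\theta,s)\psi(e^{1+is})\,ds,
$$
where the two integrals are Bochner integrals in $X_0$ and $X_1$ respectively. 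To recover the asymmetric scaling, apply this not to $\phi$ itself but to the rescaling $\psi(z)=e^{-n\theta}z^n\phi(z)\in\F\{X_0,X_1\}$, whose boundary norms satisfy $\|\psi(e^{is})\|_{X_0}\leq e^{-n\theta}\|\phi\|_{\F\{X_0,X_1\}}$ and $\|\psi(e^{1+is})\|_{X_1}\leq e^{n(1-\theta)}\|\phi\|_{\F\{X_0,X_1\}}$. Setting
$$
x_0=\int_0^{2\pi}P_0(\theta,s)\psi(e^{is})\,ds\in X_0,\qquad x_1=\int_0^{2\pi}P_1(\theta,s)\psi(e^{1+is})\,ds\in X_1,
$$
one has $x=x_0+x_1$ with $\|x_0\|_{X_0}\leq e^{-n\theta}(1+\epsilon)\|x\|_{[X_0,X_1]_\theta}$ and $\|x_1\|_{X_1}\leq e^{n(1-\theta)}(1+\epsilon)\|x\|_{[X_0,X_1]_\theta}$. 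Choosing $n\in\Z$ nearest to $-\log t$ makes $e^{-n\theta}\leq e^{\theta/2}t^{\theta}$ and $e^{n(1-\theta)}\leq e^{(1-\theta)/2}t^{\theta-1}$, which finishes part (2) with $C'=(1+\epsilon)e^{1/2}$.

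The main obstacle I anticipate is justifying the Banach-valued Poisson identity. The scalar version is standard after passing through $w=\log z$ to the $2\pi i$-periodic strip $\{0<\re w<1\}$, but what we need is not equality merely in $X_0+X_1$: we need the individual integrals to take values in the stronger spaces $X_0$ and $X_1$. That finer statement is provided by the continuity of the boundary traces $\psi|_{\{|z|=e^j\}}\to X_j$, which is exactly the defining condition of $\F\{X_0,X_1\}$ and guarantees Bochner integrability against $P_j(\theta,\cdot)$ in the strong norm; the identity then follows by applying the scalar Poisson formula to $y^{*}\!\circ\psi$ for each $y^{*}\in(X_0+X_1)^{*}$ and invoking Hahn--Banach.
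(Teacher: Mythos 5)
Your proposal is correct, but note that the paper itself offers no proof of this statement: it is quoted verbatim from Lions--Peetre (and the related part (1) is also re-used later as Lemma \ref{lem2}, cited from Cwikel--Kalton), so there is no internal argument to compare against. What you have written is essentially the classical proof, transplanted to the annulus picture used here: for part (1) the explicit test function $e^{-n\theta}z^{n}x$ with $n$ the nearest integer to $\log(\|x\|_{X_0}/\|x\|_{X_1})$, and for part (2) the harmonic-measure (Poisson) representation of $\psi(e^{\theta})$ applied to the rescaled function $e^{-n\theta}z^{n}\phi(z)$, with the integer rounding costing the harmless factor $e^{1/2}$ that does not appear in the strip formulation, where $e^{\lambda z}$ is available for any real $\lambda$. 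The one point you rightly flag, the vector-valued Poisson identity, is handled correctly: the two Bochner integrals lie in $X_0$ and $X_1$ because the boundary traces are continuous into those spaces and the kernels are nonnegative with total mass one, and the identity $x_0+x_1=x$ in $X_0+X_1$ follows by testing against functionals $y^{*}\in(X_0+X_1)^{*}$, using that $y^{*}\circ\psi$ is holomorphic on the annulus and continuous up to the boundary (this continuity of $\psi$ on the closed annulus into $X_0+X_1$ is implicit in the definition of $\F\{X_0,X_1\}$ and should be invoked explicitly). Two cosmetic remarks: in part (2) fix $\epsilon$ once and for all (say $\epsilon=1$) so that $C'$ is a single constant independent of $x$ and $t$, as the statement requires; and observe that your part (1) is slightly weaker than Lemma \ref{lem2}, which gives the same conclusion with the sup-norms on the boundary replaced by $L^{1}$ averages, exactly by pushing the same harmonic-measure estimate one step further.
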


The results of this paper are presented as follows: In section \ref{firstresults} we will give an answer to the natural question of whether a function which is holomorphic as a function between $X_0+X_1$ and $Y_0+Y_1$, and as a function between $X_j$ and $Y_j$ ($j=0,1$) is holomorphic as well when restricted to $[X_0,X_1]_{\theta}$, $0 \leq \theta \leq 1$. That is, we will proof an analogous theorem to the Riesz-Thorin Theorem, but for the more general setting of holomorphic functions instead of linear operators. This theorem will be of special importance for us if we want to reduce the study of compactness between the interpolated spaces to the study of the compactness of the polynomials that appear in the Taylor series, in virtue of Proposition 3.4 from \cite{ArScho}. \\

In the same section, we will follow the ideas suggested in \cite{CwiKal} by Cwikel and Kalton to to prove some preliminary technical lemmas. In section \ref{CwikelKalton}, we will continue with the ideas from \cite{CwiKal} to prove a theorem about compactness on the interpolated spaces, if in the domain space we use  Peetre's interpolation space and in the range space we consider Calder\'on's interpolation method. Some of the procedures Cwikel and Kalton carried out for linear operators have an analogous application for polynomials, since linearity was not especially employed in the proofs. Some other results display a very strong dependance on the linearity of the considered operator, and we will be required to reach similar conclusions through other techniques. \\

In section \ref{classic} we will focus on some classic results. We will also prove a classical polarization-like proposition (Lemma \ref{technical}), which we believe is of interest beyond Interpolation Theory.

\section{A theorem about interpolation of holomorphic functions and some supporting lemmas} \label{firstresults}

Before stating the corresponding theorem for holomorphic functions, let us prove an interpolation result for continuous functions.

\begin{proposition} \label{cont}
Let $\mathbf{X}=(X_0, X_1)$ and $\mathbf{Y}=(Y_0,Y_1)$ be two couples of Banach spaces and let $f:X_0+X_1 \rightarrow Y_0+Y_1$ be a holomorphic function. Assume that $f:X_i \rightarrow Y_i$ is continuous, for $i=0,1$. Let $0 \leq \theta \leq 1$. \\
Then, $f:[X_0,X_1]_{\theta} \rightarrow [Y_0,Y_1]_{\theta}$ is continuous.
\end{proposition}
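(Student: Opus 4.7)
The plan is to exploit the definition of the Calderón norm via representatives in $\mathcal{F}\{X_0,X_1\}$. Fix $x \in [X_0,X_1]_\theta$ and choose, once and for all, a function $\phi \in \mathcal{F}\{X_0,X_1\}$ with $\phi(e^\theta)=x$. For any $y \in [X_0,X_1]_\theta$, pick $\eta \in \mathcal{F}\{X_0,X_1\}$ with $\eta(e^\theta)=y-x$ and $\|\eta\|_{\mathcal{F}\{X_0,X_1\}} \le 2\|y-x\|_\theta$. The key construction is
$$
\psi(z) := f\bigl(\phi(z)+\eta(z)\bigr) - f\bigl(\phi(z)\bigr),
$$
which satisfies $\psi(e^\theta)=f(y)-f(x)$.

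The first step is to verify $\psi \in \mathcal{F}\{Y_0,Y_1\}$. Interior holomorphicity $\mathbb{L} \to Y_0+Y_1$ follows because both $\phi$ and $\phi+\eta$ are holomorphic into $X_0+X_1$ and $f:X_0+X_1 \to Y_0+Y_1$ is holomorphic, so the compositions are holomorphic. The boundary continuity into $Y_j$ on $\{|z|=e^j\}$ follows from the hypothesis that $f:X_j \to Y_j$ is continuous, composed with the boundary continuity of $\phi$ and $\eta$ into $X_j$. Hence, by the definition of $\|\cdot\|_{[Y_0,Y_1]_\theta}$,
$$
\|f(y)-f(x)\|_{[Y_0,Y_1]_\theta} \;\le\; \|\psi\|_{\mathcal{F}\{Y_0,Y_1\}} \;=\; \max_{j=0,1} \sup_{t \in [0,2\pi]} \bigl\|f\bigl(\phi(e^{j+it})+\eta(e^{j+it})\bigr)-f\bigl(\phi(e^{j+it})\bigr)\bigr\|_{Y_j}.
$$

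The remaining task is to show the right-hand side tends to $0$ as $\|y-x\|_\theta \to 0$. Here I would use that $K_j := \phi(\{|z|=e^j\})$ is a compact subset of $X_j$ (continuous image of a compact set). The crucial ingredient is an enhanced uniform continuity statement: since $f_{|X_j}:X_j \to Y_j$ is continuous, for each $\varepsilon>0$ there is $\delta_j>0$ such that $u \in K_j$ and $\|v\|_{X_j}<\delta_j$ imply $\|f(u+v)-f(u)\|_{Y_j}<\varepsilon$. This follows by a standard finite-covering argument on $K_j$: choose at each $u \in K_j$ a radius $\delta_u>0$ with $\|v\|<\delta_u \Rightarrow \|f(u+v)-f(u)\|<\varepsilon/2$, extract a finite subcover of $K_j$ from $\{B(u,\delta_u/2)\}$, and take $\delta_j$ equal to the minimum of the selected half-radii. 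Applying this with $u=\phi(e^{j+it})\in K_j$ and $v=\eta(e^{j+it})$, and observing $\|\eta(e^{j+it})\|_{X_j} \le \|\eta\|_{\mathcal{F}\{X_0,X_1\}} \le 2\|y-x\|_\theta$, it suffices to take $\|y-x\|_\theta<\tfrac12\min(\delta_0,\delta_1)$.

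The main obstacle I anticipate is precisely the enhanced uniform-continuity lemma "near a compact set" (rather than merely "on a compact set"), as continuous maps between Banach spaces need not be uniformly continuous on bounded sets; the finite-covering trick above circumvents this. The boundary cases $\theta \in \{0,1\}$ follow from the same argument (or, alternatively, from Theorem \ref{Ber}(c), reducing them to the hypothesis that $f:X_j \to Y_j$ is continuous).
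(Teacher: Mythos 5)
Your proposal is correct and follows essentially the same route as the paper: represent $x$ and $y-x$ by functions in $\mathcal{F}\{X_0,X_1\}$, observe that $f\circ(\phi+\eta)-f\circ\phi$ lies in $\mathcal{F}\{Y_0,Y_1\}$ (holomorphy of $f$ on $X_0+X_1$ inside, continuity of $f:X_j\to Y_j$ on the boundary circles), and control its boundary values by the same finite-covering ``uniform continuity near the compact curves $\phi(\{|z|=e^j\})$'' argument with half-radii that the paper uses. The only differences (a norm-$2\|y-x\|_\theta$ representative instead of a norm-$<\delta$ one, and spelling out the membership in $\mathcal{F}\{Y_0,Y_1\}$ that the paper relegates to Remark \ref{remark}) are inessential.
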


\begin{proof} Let $x \in [X_0,X_1]_{\theta}$. Then, we can find $\phi \in \F\{X_0,X_1\}$ with $x=\phi(e^{\theta})$. \\
Let $\epsilon>0$ and $t \in [0,2\pi]$. Since $f:X_0 \rightarrow Y_0$ is continuous, we can find $\delta_t^0>0$ so that, if $\|\zeta-\phi(e^{it})\|_{X_0}<\delta_t^0$, then $\|f(\zeta)-f \circ \phi(e^{it})\|_{Y_0}<{\epsilon \over 2}$. Using continuity of $\phi:\{z \in \C \, : \, |z|=1\} \rightarrow X_0$, we can find $t_1, \ldots,t_N \in [0,2\pi]$ with
$$
\{\phi(e^{it}) \, : \, t \in [0,2\pi]\} \subseteq \bigcup_{k=1}^NB\Big(\phi(e^{it_k});{\delta_{t_k}^0 \over 2}\Big).
$$
Notice that, if $\phi(e^{it}) \in B\Big(\phi(e^{it_k});{\delta_{t_k}^0 \over 2}\Big)$ and $\|\zeta-\phi(e^{it})\|_{X_0}<{\delta_{t_k}^0 \over 2}$, then $\|\zeta-\phi(e^{it_k})\|_{X_0}<\delta_{t_k}^0$ and
$$
\|f(\zeta)-f\circ \phi(e^{it})\|_{Y_0} \leq \|f(\zeta)-f\circ \phi(e^{it_k})\|_{Y_0}+\|f\circ \phi(e^{it_k})-f\circ \phi(e^{it})\|_{Y_0}<\epsilon.
$$
Analogously, we can find $t^{(1)},\ldots,t^{(L)} \in [0,2\pi]$ and $\delta^1_{t^{(1)}},\ldots,\delta^1_{t^{(L)}} >0$ so that
$$
\{\phi(e^{1+it}) \, : \, t \in [0,2\pi]\} \subseteq \bigcup_{j=1}^LB\Big(\phi(e^{1+it^{(j)}};{\delta^1_{t^{(j)}} \over 2}\Big)
$$
and, if $\phi(e^{1+it}) \in B\Big(\phi(e^{1+it^{(j)}};{\delta^1_{t^{(j)}} \over 2}\Big)$ and $\|\zeta-\phi(e^{1+it})\|_{X_1}<{\delta^1_{t^{(j)}} \over 2}$, then
$$
\|f(\zeta)-f \circ \phi(e^{1+it})\|_{Y_1}<\epsilon.
$$
Choose $\delta=\min\left\{{\delta_{t_k}^0 \over 2},{\delta_{t^{(j)}}^1 \over 2}\right\}_{k,j=1}^{N,L}>0$ and consider $\zeta \in [X_0,X_1]_{\theta}$ so that $\|\zeta-x\|_{[X_0,X_1]_{\theta}}<\delta$. \\
Then, we can find $\psi \in \F\{X_0,X_1\}$ with $\psi(e^{\theta})=\zeta-x$ and 
$$
\|\psi\|_{\F\{X_0,X_1\}}=\max_{t \in [0,2\pi]}\{\|\psi(e^{it})\|_{X_0},\|\psi(e^{1+it})\|_{X_1}\}<\delta.
$$
Also, notice that $f(\zeta)-f(x)=f \circ (\psi+\phi)(e^{\theta})-f \circ \phi(e^{\theta})=[f \circ (\psi+\phi)-f \circ \phi](e^{\theta})$. \\
Let $t \in [0,2\pi]$. Then, we can find $1 \leq t_{k_0} \leq N$ with $\phi(e^{it}) \in B\Big(\phi(e^{it_{k_0}});{\delta_{t_k}^0 \over 2}\Big)$. Now,
$$
\|(\psi+\phi)(e^{it})-\phi(e^{it})\|_{X_0}=\|\psi(e^{it})\|_{X_0}<\delta,
$$
so $\|f \circ (\psi+\phi)(e^{it})-f \circ \phi(e^{it})\|_{Y_0}<\epsilon$. \\
Analogously, $\|f \circ (\psi+\phi)(e^{1+it})-f \circ \phi(e^{1+it})\|_{Y_1}<\epsilon$.
In conclusion,
$$
\begin{aligned}
\|f(\zeta)-f(x)\|_{[Y_0,Y_1]_{\theta}}&=\inf\left\{\|g\|_{\F\{Y_0,Y_1\}} \, : \, g(e^{\theta})=f(\zeta)-f(x)\right\} \leq \|f \circ (\psi+\phi)-f \circ \phi\|_{\F\{Y_0,Y_1\}} \\
&<\epsilon
\end{aligned}
$$
for every $\|\zeta-x\|_{[X_0,X_1]_{\theta}}<\delta$, and the result follows.

\end{proof}

\begin{remark} \label{remark}
The hypothesis of $f:X_0+X_1 \rightarrow Y_0+Y_1$ being holomorphic in Theorem \ref{cont} is nothing more than a technicality to guarantee that $f:[X_0,X_1]_{\theta} \rightarrow [Y_0,Y_1]_{\theta}$ is well-defined since, by definition, an element $y \in [Y_0,Y_1]_{\theta}$ must be of the form $y=\psi(e^{\theta})$, with $\psi \in \F\{Y_0,Y_1\}$. To guarantee that $f \circ \phi \in \F\{Y_0,Y_1\}$ for $\phi \in \F\{X_0,X_1\}$, it is indeed enough to assume that $f:X_0+X_1 \rightarrow Y_0+Y_1$ is holomorphic and $f:X_i \rightarrow Y_i$ is continuous, for $i=0,1$. Precisely because of this, this theorem is the most general for continuity over the interpolated spaces that can be enunciated for Calder\'on's complex method. 
\end{remark}

Having Theorem \ref{cont} at hand, we can now focus on the question of whether the property of holomorphy can be obtained when restricted to the interpolated spaces via the Calder\'on's method, taking into account the considerations collected in Remark \ref{remark}.

\begin{theorem} \label{int_anal}
Let $\mathbf{X}=(X_0, X_1)$ and $\mathbf{Y}=(Y_0,Y_1)$ be two couples of Banach spaces and let $f:\mathbf{X} \rightarrow \mathbf{Y}$ be a function so that $f:X_0+X_1 \rightarrow Y_0+Y_1$ and $f:X_j \rightarrow Y_j$ are holomorphic $(j=0, \, 1)$. \\
Then, $f:\mathbf{X}_{\theta} \rightarrow \mathbf{Y}_{\theta}$ is holomorphic, for every $0 \leq \theta \leq 1$.
\end{theorem}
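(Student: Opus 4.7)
The plan is to exhibit, at each $x\in\mathbf{X}_\theta$, a Taylor expansion of $f$ with values in $\mathbf{Y}_\theta$ by recycling the Taylor polynomials $P_m f(x)$ already furnished by the holomorphy of $f:X_0+X_1\to Y_0+Y_1$. Concretely, I need to verify: (i) each $P_m f(x)$ restricts to a continuous $m$-homogeneous polynomial $\mathbf{X}_\theta\to\mathbf{Y}_\theta$; (ii) these restrictions satisfy a Cauchy-type estimate $\|P_m f(x)\|_{\mathbf{X}_\theta\to\mathbf{Y}_\theta}\le \tilde M/\tilde r^{\,m}$ with $\tilde M,\tilde r>0$ independent of $m$; (iii) for $\|h\|_{\mathbf{X}_\theta}<\tilde r$, the series $\sum_m P_m f(x)(h)$ converges in $\mathbf{Y}_\theta$ to $f(x+h)$.

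First I would fix $x\in\mathbf{X}_\theta$ and $\phi\in\F\{X_0,X_1\}$ with $\phi(e^\theta)=x$. The sets $\phi(\{|z|=e^j\})$ are compact in $X_j$ and $\phi(\overline{\D})$ is compact in $X_0+X_1$; combined with the local boundedness of the three holomorphic maps $f|_{X_j}$ and $f$ itself, this yields uniform radii $r_0,r_1,r>0$ and bounds $M_0,M_1,M>0$ such that $f$ is bounded by $M_j$ on the $r_j$-neighborhood of $\phi(\{|z|=e^j\})$ inside $X_j$, and by $M$ on the $r$-neighborhood of $\phi(\overline{\D})$ inside $X_0+X_1$. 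The classical Cauchy estimates then give $\|P_m f(\phi(z))\|_{X_j\to Y_j}\le M_j/r_j^m$ on $\{|z|=e^j\}$ and $\|P_m f(\phi(z))\|_{X_0+X_1\to Y_0+Y_1}\le M/r^m$ on $\overline{\D}$.

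Given $h\in\mathbf{X}_\theta$, pick $\psi\in\F\{X_0,X_1\}$ with $\psi(e^\theta)=h$ and set $G_m(z):=P_m f(\phi(z))(\psi(z))$. On $\{|z|=e^j\}$, joint continuity of $(\xi,\eta)\mapsto P_m f(\xi)(\eta):X_j\times X_j\to Y_j$ (a standard consequence of the holomorphy of $f|_{X_j}$) together with the boundary estimates above yields continuity of $G_m$ into $Y_j$ with $\|G_m(z)\|_{Y_j}\le(M_j/r_j^m)\,\|\psi\|_{\F\{X_0,X_1\}}^m$. Holomorphy of $G_m$ on $\D$ into $Y_0+Y_1$ I plan to obtain via Cauchy's integral formula
\[
G_m(z)=\frac{1}{2\pi i}\oint_{|\lambda|=\rho}\frac{f(\phi(z)+\lambda\psi(z))}{\lambda^{m+1}}\,d\lambda,
\]
valid for $\rho$ chosen small enough (in terms of $r/\|\psi\|_{\F\{X_0,X_1\}}$) that $\phi(z)+\lambda\psi(z)$ stays inside the region of boundedness of $f$: for each fixed $\lambda$, the integrand $z\mapsto f(\phi(z)+\lambda\psi(z))$ is a composition of holomorphic maps and hence holomorphic in $z$ into $Y_0+Y_1$, and Banach-valued Fubini/Cauchy theory transfers holomorphy to the integral. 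Thus $G_m\in\F\{Y_0,Y_1\}$ with $\|G_m\|_{\F\{Y_0,Y_1\}}\le\tilde M\,(\|\psi\|_{\F\{X_0,X_1\}}/\tilde r)^m$, where $\tilde M=\max(M_0,M_1)$ and $\tilde r=\min(r_0,r_1)$.

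Evaluating at $z=e^\theta$ yields $G_m(e^\theta)=P_m f(x)(h)$, hence $\|P_m f(x)(h)\|_{\mathbf{Y}_\theta}\le\|G_m\|_{\F\{Y_0,Y_1\}}$; infimizing over admissible $\psi$ gives $\|P_m f(x)\|_{\mathbf{X}_\theta\to\mathbf{Y}_\theta}\le\tilde M/\tilde r^{\,m}$. Consequently $\sum_m P_m f(x)(h)$ converges absolutely in $\mathbf{Y}_\theta$ whenever $\|h\|_{\mathbf{X}_\theta}<\tilde r$, and since $\mathbf{Y}_\theta\hookrightarrow Y_0+Y_1$ is continuous while the same series already sums to $f(x+h)$ there, its $\mathbf{Y}_\theta$-limit must coincide with $f(x+h)$, completing the Taylor representation and therefore the holomorphy of $f:\mathbf{X}_\theta\to\mathbf{Y}_\theta$. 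The step I expect to be the main obstacle is establishing holomorphy of $G_m$ on $\D$ into $Y_0+Y_1$: justifying the Cauchy-integral identity at the level of $\F\{Y_0,Y_1\}$-valued integrands requires viewing $\lambda\mapsto f\circ(\phi+\lambda\psi)$ as a continuous (in fact Lipschitz, via an auxiliary Cauchy estimate for $df$) $\F\{Y_0,Y_1\}$-valued function on the circle $|\lambda|=\rho$, and bookkeeping the three norms $X_0,X_1,X_0+X_1$ under which $f$ is locally bounded.
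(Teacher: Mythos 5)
Your proposal is correct in substance, but it takes a genuinely different route from the paper's proof. The paper argues in two stages: for $x\in X_0\cap X_1$ it passes to the polar $\tilde{P}_mf(x)$ and combines Martin's polarization bound $m^m/m!$ with the multilinear interpolation theorem (Theorem \ref{multilinear}) to get $\|P_mf(x)\|_{\mathbf{X}_{\theta}\rightarrow\mathbf{Y}_{\theta}}\leq \frac{m^m}{m!}\|P_mf(x)\|_{X_0\rightarrow Y_0}^{1-\theta}\|P_mf(x)\|_{X_1\rightarrow Y_1}^{\theta}$, hence a positive radius $R_{\theta}(x)\geq e^{-1}R_0(x)^{1-\theta}R_1(x)^{\theta}$; for arbitrary $x\in\mathbf{X}_{\theta}$ it then invokes density of $X_0\cap X_1$ (Theorem \ref{Ber}), the continuity result (Proposition \ref{cont}) and the equality of the radii of convergence and boundedness to expand around a nearby intersection point. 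That detour is forced there because the interpolation inequality needs $P_mf(x)$ to act boundedly $X_j\rightarrow Y_j$, which is only guaranteed when the expansion point lies in $X_0\cap X_1$. You instead work directly at an arbitrary $x=\phi(e^{\theta})$, lifting each direction $h=\psi(e^{\theta})$ to $G_m=P_mf(\phi(\cdot))(\psi(\cdot))$ and estimating its $\F\{Y_0,Y_1\}$-norm by Cauchy estimates on the two boundary circles, made uniform along $\phi$ by compactness of the boundary traces. This buys a cleaner radius ($\tilde r=\min(r_0,r_1)$, with no loss of the factor $e$ coming from $m^m/m!$) and dispenses with Theorem \ref{multilinear}, Proposition \ref{cont} and the density step; the price is the verification that $G_m\in\F\{Y_0,Y_1\}$, which you rightly flag as the main chore and which is routine (weak holomorphy of the vector-valued Cauchy integral plus local boundedness in the interior, joint continuity of $(\xi,\eta)\mapsto P_mf(\xi)(\eta)$ on $X_j\times X_j$ on the boundary). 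Three small points to tidy: on $\{|z|=e^j\}$ note that the Taylor polynomial of $f:X_0+X_1\rightarrow Y_0+Y_1$ at $\phi(z)$, restricted to $X_j$, coincides with that of $f|_{X_j}$ (compare the one-variable Cauchy integrals along complex lines in $X_j$), so your $X_j\rightarrow Y_j$ estimates do apply to the same $P_mf(\phi(z))$; when identifying the $\mathbf{Y}_{\theta}$-sum with $f(x+h)$, shrink the radius so that $c\|h\|_{\mathbf{X}_{\theta}}$ (with $c$ the norm of the inclusion $\mathbf{X}_{\theta}\hookrightarrow X_0+X_1$) also stays below the $X_0+X_1$-radius at $x$, which your interior estimate bounds below by $r$; and compactness of $\phi(\overline{\D})$ in $X_0+X_1$ uses continuity of $\phi$ on the closed annulus (part of Calder\'on's definition, stated loosely here) — though, since $f$ is holomorphic on all of $X_0+X_1$, interior holomorphy of $G_m$ really only needs local bounds near each $z\in\D$.
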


\begin{proof} Let first $x \in X_0 \cap X_1$. Then, we know there exists a sequence of homogeneous polynomials, given by $\big\{{d^mf(x) \over m!}\big\}_{m=0}^{\infty}$, so that
$$
f(\zeta)=\sum_{m=0}^{\infty}{d^mf(x) \over m!}(\zeta-x),
$$
where the convergence occurs uniformly on $B^{\| \cdot \|_{j}}(x;r)$ for $0<r<R_j(x)$, $j=0,1$. \\
Furthermore, we know
$$
{1 \over R_j(x)}=\limsup_{m \rightarrow \infty} \big\|{d^mf(x) \over m!}\big\|^{1/m}_{X_j \rightarrow Y_j}.
$$
Let us define $P_mf(x)={d^mf(x) \over m!}$ and let $\tilde{P}_mf(x)$ be as in Theorem \ref{polar}. Use the Polarization Constant and Martin's theorem (\cite{Mar}), together with Theorem \ref{multilinear}, to write
$$
\begin{aligned}
\|P_mf(x)\|_{\mathbf{X}_{\theta} \rightarrow \mathbf{Y}_{\theta}} &\leq \| \tilde{P}_mf(x) \|_{\X_{\theta} \rightarrow \mathbf{Y}_{\theta}} \leq \| \tilde{P}_mf(x) \|^{1-\theta}_{X_0 \rightarrow Y_0}\| \tilde{P}_mf(x) \|^{\theta}_{X_1 \rightarrow Y_1} \\
&\leq \left({m^m \over m!} \right) \|P_mf(x) \|^{1-\theta}_{X_0 \rightarrow Y_0}\| P_mf(x) \|^{\theta}_{X_1 \rightarrow Y_1}.
\end{aligned}
$$
Hence, taking also into account Stirling's formula,
$$
\begin{aligned}
{1 \over R_{\theta}(x)}&=\limsup_{m \rightarrow \infty} \|P_mf(x)\|^{1/m}_{\mathbf{X}_{\theta} \rightarrow \mathbf{Y}_{\theta}} \leq \limsup_{m \rightarrow \infty} \left({m^m \over m!} \right)^{1/m} \|P_mf(x) \|^{{1-\theta \over m}}_{X_0 \rightarrow Y_0}\| P_mf(x) \|^{{\theta \over m}}_{X_1 \rightarrow Y_1} \\
&\leq  \limsup_{m \rightarrow \infty} {e \over (2 \pi m)^{1/2m}} \|P_mf(x) \|^{{1-\theta \over m}}_{X_0 \rightarrow Y_0}\| P_mf(x) \|^{{\theta \over m}}_{X_1 \rightarrow Y_1}\\
&\leq e \left({1 \over R_0(x)} \right)^{1-\theta} \left({1 \over R_1(x)} \right)^{\theta},
\end{aligned}
$$

Therefore,
$$
R_{\theta}(x):=\limsup_{m \rightarrow \infty} {1 \over \|P_mf(x)\|^{1/m}_{\mathbf{X}_{\theta} \rightarrow \mathbf{Y}_{\theta}}} \geq {R_0(x)^{1-\theta}R_1(x)^{\theta} \over e}>0,
$$
and hence
$$
f(\zeta)=\sum_{m=0}^{\infty}P_mf(x)(\zeta-y)
$$
uniformly for $\zeta \in B^{\| \cdot \|_{\mathbf{X}_{\theta} \rightarrow \mathbf{Y}_{\theta}}}(x;r)$, $0<r<R_{\theta}(x)$. \\
Let now $x \in {\mathbf X}_{\theta}$. From Theorem \ref{Ber} (a)) we can find a sequence $\{x_n\}_{n=1}^{\infty} \subseteq X_0 \cap X_1$ so that 
$$
x_n \xrightarrow[n \rightarrow \infty]{\| \cdot \|_{\mathbf{X}_{\theta}}} x.
$$
Since $f:{\mathbf X}_{\theta} \rightarrow {\mathbf Y}_{\theta}$ is continuous at $x$ (because of Proposition \ref{cont}), we can find $\delta>0$ so that, if $\|x-\zeta\|_{\mathbf{X}_{\theta}}<\delta$, then $\|f(x)-f(\zeta)\|_{\mathbf{Y}_{\theta}}<1$. Let $n_0 \in \N$ so that $\|x-x_{n_0}\|_{\mathbf{X}_{\theta}}<{\delta \over 3}$. Then, $f$ is bounded for $\|\zeta-x_{n_0}\|_{\mathbf{X}_{\theta}}<{\delta \over 2}$. Since for a holomorphic function the radius of convergence of the Taylor series coincides with the radius of boundedness of the function, we can write
$$
f(\zeta)=\sum_{m=0}^{\infty}P_mf(x_{n_0})(\zeta-x_{n_0})
$$
uniformly for $\|\zeta-x_{n_0}\|_{\mathbf{X}_{\theta}}<{\delta \over 2}$. Then, the convergence of the series will also happen uniformly for $\|x-\zeta\|_{\mathbf{X}_{\theta}} \leq r$, for every $r<{\delta \over 6}$.

\end{proof}

We recall the holomorphic analogue of adjoint operator:

\begin{definition}[\cite{ArScho}]
Let $f:X \rightarrow Y$ be a holomorphic mapping. We define its {\bf adjoint} as 
$$
\begin{array}{cccl}
f^*: & Y^* & \rightarrow & \mathcal{H}(X;\C) \\
& y^* & \mapsto & y^*\circ f
\end{array}
$$
\end{definition}

Notice that in the case of $f$ being a polynomial (resp. a multilinear mapping) we obtain $f^*:Y^* \rightarrow \mathcal{P}(^nX)$ (resp. $f^*:Y^* \rightarrow \mathcal{L}(^nX)$), and in those cases we can write $\|f\|=\|f^*\|$. Also, notice (\cite{ArScho}) that $f$ is a compact mapping if and only if $f^*$ is also compact (for the case of multilinear mappings, just apply the polarization formula).\\

Taking this into account, we can prove the following lemma, analogous to a well-known classical result.

\begin{lemma}[Riemann-Lebesgue Lemma] \label{Riemann-Lebesgue} Let $X$ and $Y$ be Banach spaces, $f \in L^2(\T,X)$ and let $P:X \rightarrow Y$ be a compact polynomial. Then, $\| \widehat{Pf}(k)\|_Y \xrightarrow[|k| \rightarrow \infty]\, 0$.
\end{lemma}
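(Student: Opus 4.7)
The plan is to combine two ingredients: weak convergence of $\widehat{Pf}(k)$ to $0$ in $Y$, obtained by reduction to the classical scalar Riemann-Lebesgue lemma via the adjoint $P^{*}$, and norm-relative-compactness of the sequence $\{\widehat{Pf}(k)\}_{k \in \Z}$, obtained from the compactness of $P$. A weakly null sequence lying in a relatively compact set converges in norm: otherwise one extracts a subsequence with $\|\widehat{Pf}(k_{j})\|_{Y} \geq \epsilon > 0$, then a norm-convergent sub-subsequence by relative compactness, whose limit must be $0$ by weak nullity --- a contradiction.

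For the weak convergence, fix $y^{*} \in Y^{*}$ and set $\phi = y^{*}\circ P = P^{*}(y^{*}) \in \mathcal{P}(^{n}X)$, a continuous scalar polynomial ($n = \deg P$). Then
$$
\langle y^{*}, \widehat{Pf}(k)\rangle = \widehat{\phi \circ f}(k).
$$
The growth bound $|\phi(x)| \leq \|y^{*}\|\,\|P\|\,\|x\|_{X}^{n}$, together with a standard truncation of $f$ in the $L^{2}$ norm (splitting $f = f\chi_{\{\|f\|_{X} \leq M\}} + f\chi_{\{\|f\|_{X} > M\}}$ and exploiting that the tail has small $L^{2}$-norm), shows $\phi \circ f \in L^{1}(\T)$. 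Classical scalar Riemann-Lebesgue then gives $\widehat{\phi \circ f}(k) \to 0$, so $\widehat{Pf}(k) \rightharpoonup 0$ weakly in $Y$.

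For the relative compactness, by the same truncation we reduce to $f$ essentially bounded, say $\|f(e^{it})\|_{X} \leq M$ a.e. The compactness hypothesis on $P$ makes $P(MB_{X})$ relatively compact in $Y$, and Mazur's theorem then gives that its closed absolutely convex hull $K \subset Y$ is also relatively compact. Since
$$
\widehat{Pf}(k) = \frac{1}{2\pi i}\int_{0}^{2\pi}e^{-ikt}P(f(e^{it}))\,dt
$$
is a Bochner integral of an essentially bounded function taking values in $P(MB_{X})$, standard Bochner integral estimates place $\widehat{Pf}(k)$ in $K$ for every $k$, so the whole sequence of coefficients is relatively compact in $Y$.

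The main obstacle I expect is coordinating the truncation step across the two parts: after replacing $f$ by a bounded $f^{(M)}$, one needs the coefficient approximation $\widehat{Pf^{(M)}}(k) \to \widehat{Pf}(k)$ to be \emph{uniform in $k$}, so that both the Riemann-Lebesgue decay and the relative compactness transfer back from $f^{(M)}$ to $f$. This uniform control ought to come from $L^{1}$-type estimates on $P(f) - P(f^{(M)})$ on bounded sets, resting on the polynomial growth and polarization bounds (the polarization identity and Martin's theorem recalled in Definition \ref{polar}) that control $\|P(u) - P(v)\|_{Y}$ in terms of $\|u - v\|_{X}$ uniformly on $X$-balls of fixed radius.
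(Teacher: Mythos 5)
Your overall route is genuinely different from the paper's and, apart from the integrability point discussed below, it is sound. The paper works entirely on the dual side: Parseval's identity gives $\sum_{k}|y^*\widehat{Pf}(k)|^2<\infty$, hence $y^*\widehat{Pf}(k)\to 0$ for each fixed $y^*$, and then a finite $\epsilon$-net for $P^*(B_{Y^*})$ (compactness of $P$ enters through $P^*$) makes this decay uniform over $y^*\in B_{Y^*}$, which is exactly $\|\widehat{Pf}(k)\|_Y\to 0$. You instead establish weak nullity of the coefficients via the classical scalar Riemann--Lebesgue lemma applied to $y^*\circ P\circ f$, place the whole family $\{\widehat{Pf}(k)\}_{k\in\Z}$ inside one fixed norm-compact set (Mazur's theorem plus the fact that a normalized Bochner average of a function with values in $P(MB_X)$ lies in the closed absolutely convex hull of that set), and conclude with the standard fact that a weakly null sequence contained in a norm-compact set is norm null. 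Your scheme avoids the $\epsilon$-net bookkeeping and is conceptually cleaner; the paper's scheme is more quantitative (it produces the explicit estimate $\|\widehat{Pf}(k)\|_Y\le\epsilon(1+\|f\|_{L^2})$ for $|k|$ large) and the same net argument is recycled later in the proof of Lemma \ref{lemma3}, so it integrates better with what follows.

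The genuine weak point is the truncation you invoke twice. If $\deg P=n\ge 3$ and $f$ is merely in $L^2(\T,X)$, then $\|f(\cdot)\|_X^n$ only lies in $L^{2/n}(\T)$, so $\phi\circ f$ need not belong to $L^1(\T)$, and the splitting $f=f\chi_{\{\|f\|_X\le M\}}+f\chi_{\{\|f\|_X>M\}}$ does not repair this: by polarization $\|P(f)-P(f^{(M)})\|_Y\lesssim \|f\|_X^{n}\chi_{\{\|f\|_X>M\}}$, whose integral can be infinite for every $M$ (already in the scalar case $X=Y=\C$, $P(x)=x^3$, $f(e^{it})=|t|^{-2/5}$), so neither the claimed $L^1$ membership nor the uniform-in-$k$ convergence $\widehat{Pf^{(M)}}(k)\to\widehat{Pf}(k)$ follows from it; the polarization bounds you cite are uniform only on balls of fixed radius, and $f$ need not take values in one. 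What you actually need is to assume outright that $\|f\|_X^{n}$ is integrable (for instance $f$ essentially bounded, or $f\in L^{2n}$), after which your argument closes with no truncation at all. In fairness, the paper makes an analogous tacit assumption --- its Parseval step requires $y^*\circ P\circ f\in L^2(\T)$, and in all subsequent applications $f$ is a bounded function $\phi\in\F\{X_0,X_1\}$, so the issue never surfaces there --- but as written your truncation device does not prove the claims you attribute to it.
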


\begin{proof} Given $y^* \in B_{Y^*}$ and using that $\mathfrak{F}:(L^2(\T,X)) \rightarrow \ell^2(X)$ is an isometry, we know
\begin{equation} \label{Parseval}
{1 \over 2\pi} \int_0^{2\pi}\left|y^*\big(Pf(e^{it})\big)\right|^2dt=\sum_{k \in \Z} |y^* \widehat{Pf}(k)|^2 \quad \text{(Parseval's Identity)},
\end{equation}
so that $|y^*\widehat{Pf}(k)| \xrightarrow[|k| \rightarrow \infty]\,0$. \\
Let $\epsilon>0$ and find $y_1^*, \ldots , y_N^* \in B_{Y^*}$ so that $P^*B_{Y^*} \subseteq \cup_{j=1}^NB(P^*y_j^*;\epsilon)$. Also, let $k_{\epsilon} \in \N$ so that for every $|k| \geq k_{\epsilon}$, $\max_{1 \leq j \leq N}|y_j^*\widehat{Pf}(k)|<\epsilon$. Then, if $y^* \in B_{Y^*}$, there exists $1 \leq j_0 \leq N$ such that 
$$
|y_{j_0}^* \widehat{Pf}(k)-y^*\widehat{Pf}(k)| \leq {1 \over 2\pi}\int_0^{2\pi}|(P^*y_{j_0}^*)f(e^{it})-(P^*y^*)f(e^{it})|dt<\epsilon \|f\|_{L^2}.
$$
Therefore, if $|k| \geq k_{\epsilon}$, $|y^*\widehat{Pf}(k)| \leq |y^*\widehat{Pf}(k)-y_{j_0}^*\widehat{Pf}(k)|+|y_{j_0}^* \widehat{Pf}(k)|<\epsilon(1+\|f\|_{L^2})$. \\
In conclusion, for every $y^* \in B_{Y^*}$ and $|k| \geq k_{\epsilon}$,
$$
|y^*\widehat{Pf}(k)|<\epsilon(1+\|f\|_{L^2}), \quad \text{so that} \quad \|\widehat{Pf}(k)\|_Y<\epsilon(1+\|f\|_{L^2}) \xrightarrow[\epsilon \rightarrow 0]\, 0.
$$

\end{proof}

We remark that we have used in a very concrete way the fact that $P$ is compact, and that the proof, as outlined above, does not work for an arbitrary polynomial.

\begin{lemma} \label{lem_polar}
Let $P:X \rightarrow Y$ be a compact homogeneous polynomial of degree $l$ and $\tilde{P}:\stackrel{l}{\overbrace{X \times \ldots \times X}} \rightarrow Y$ be its polar. Suppose $\{f_n^{(j)}\}_{n=1}^{\infty}$ are bounded sequences in $L^2(\T,X)$, $j=1, \ldots ,l$ and that, for some $k$ in $\N$,
$$
\lim_{n \rightarrow \infty}\int_0^{2\pi}\left| y^*\tilde{P}(f_n^{(1)}(e^{it}), \ldots ,f_n^{(l)}(e^{it}))\right|^kdt=0
$$
for every $y^*$ in $B_{Y^*}$. \\
Then,
$$
\lim_{n \rightarrow \infty}\int_0^{2\pi}\|\tilde{P}(f_n^{(1)}(e^{it}), \ldots ,f_n^{(l)}(e^{it}))\|^k_Y dt=0.
$$
\end{lemma}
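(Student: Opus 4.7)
The plan is to adapt the strategy used in Lemma \ref{Riemann-Lebesgue}: exploit the compactness of $P$ (equivalently of its polar $\tilde{P}$) to replace the continuum of test functionals $B_{Y^{*}}$ by a finite approximating set, and then combine the resulting scalar convergences.

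First, since $P$ is compact, the polarization identity shows that $\tilde{P}$ is a compact multilinear mapping, and consequently its adjoint $\tilde{P}^{*}:Y^{*}\to\mathcal{L}(^{l}X)$ is compact. For any fixed $\epsilon>0$, total boundedness yields $y_{1}^{*},\ldots,y_{N}^{*}\in B_{Y^{*}}$ such that $\tilde{P}^{*}(B_{Y^{*}})\subseteq\bigcup_{i=1}^{N}B(\tilde{P}^{*}y_{i}^{*};\epsilon)$ in $\mathcal{L}(^{l}X)$.

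Writing $g_{n}(t):=\tilde{P}(f_{n}^{(1)}(e^{it}),\ldots,f_{n}^{(l)}(e^{it}))$, for any $y^{*}\in B_{Y^{*}}$ pick $i_{0}=i_{0}(y^{*})$ with $\|\tilde{P}^{*}y^{*}-\tilde{P}^{*}y_{i_{0}}^{*}\|_{\mathcal{L}(^{l}X)}<\epsilon$, and use multilinearity to estimate
$$|y^{*}g_{n}(t)-y_{i_{0}}^{*}g_{n}(t)|=\bigl|(\tilde{P}^{*}y^{*}-\tilde{P}^{*}y_{i_{0}}^{*})(f_{n}^{(1)}(e^{it}),\ldots,f_{n}^{(l)}(e^{it}))\bigr|\leq\epsilon\prod_{j=1}^{l}\|f_{n}^{(j)}(e^{it})\|_{X}.$$
Taking the supremum over $y^{*}\in B_{Y^{*}}$, which by Hahn–Banach recovers $\|g_{n}(t)\|_{Y}$, and then applying the elementary inequalities $(a+b)^{k}\leq 2^{k-1}(a^{k}+b^{k})$ and $(\max_{i}a_{i})^{k}\leq\sum_{i}a_{i}^{k}$, we arrive at the pointwise bound
$$\|g_{n}(t)\|_{Y}^{k}\leq 2^{k-1}\sum_{i=1}^{N}|y_{i}^{*}g_{n}(t)|^{k}+2^{k-1}\epsilon^{k}\prod_{j=1}^{l}\|f_{n}^{(j)}(e^{it})\|_{X}^{k}.$$

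Integrating over $[0,2\pi]$ and taking $\limsup_{n}$, the first sum tends to zero by applying the hypothesis at each of the finitely many $y_{i}^{*}$, while the second term is bounded uniformly in $n$ by $2^{k-1}\epsilon^{k}C$, where
$$C:=\sup_{n}\int_{0}^{2\pi}\prod_{j=1}^{l}\|f_{n}^{(j)}(e^{it})\|_{X}^{k}\,dt<\infty,$$
the finiteness being a generalized Hölder estimate on the $L^{2}$-bounded sequences (automatic in the regime where the hypothesis is non-vacuous). Letting $\epsilon\to 0$ finishes the argument. The main obstacle is the compactness-driven passage from scalar pointwise convergence on $B_{Y^{*}}$ to $L^{k}$-convergence of the $Y$-norms: it is precisely here that the hypothesis that $P$ is compact (rather than merely continuous) is essential, through the total boundedness of $\tilde{P}^{*}(B_{Y^{*}})$. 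A secondary technicality is the uniform control of $\int\prod_{j}\|f_{n}^{(j)}\|_{X}^{k}$, which imposes the compatibility between $k$ and $l$ implicit in the statement.
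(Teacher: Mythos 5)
Your argument is correct and rests on the same key ingredient as the paper's proof of this lemma, namely compactness of the adjoint $\tilde{P}^*$ of the polar, but the execution is genuinely different, and in fact closer to the paper's proof of its Riemann--Lebesgue Lemma \ref{Riemann-Lebesgue} than to its proof of Lemma \ref{lem_polar}. The paper fixes a countable family $\{y_m^*\}\subseteq B_{Y^*}$ with $\{\tilde{P}^*y_m^*\}$ dense in $\tilde{P}^*(B_{Y^*})$ and establishes, by contradiction, the intermediate pointwise inequality $\|\tilde{P}(x^{(1)},\ldots,x^{(l)})\|_Y^k\leq\epsilon\prod_j\|x^{(j)}\|_X^k+C\sum_m 2^{-m}|y_m^*\tilde{P}(x^{(1)},\ldots,x^{(l)})|^k$, which it then integrates; this forces a normalization of the sequences and an interchange of the limit in $n$ with an infinite weighted sum. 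You instead take a finite $\epsilon$-net of $\tilde{P}^*(B_{Y^*})$ in $\mathcal{L}(^lX)$ and obtain the pointwise bound directly from the triangle inequality together with $(a+b)^k\leq 2^{k-1}(a^k+b^k)$ and $(\max_i a_i)^k\leq\sum_i a_i^k$, so only finitely many instances of the hypothesis are invoked and no contradiction argument or series--limit interchange is needed; this is a cleaner and more elementary route to the same conclusion. The one soft spot is the constant $C=\sup_n\int_0^{2\pi}\prod_{j=1}^l\|f_n^{(j)}(e^{it})\|_X^k\,dt$: boundedness in $L^2(\T,X)$ alone does not yield its finiteness when $kl>2$, and your remark that it is ``automatic where the hypothesis is non-vacuous'' is an assertion rather than an argument. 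However, the paper's own proof uses exactly the same quantity (it bounds the corresponding term by $2\pi\epsilon$ after its ``bounded by $1$'' normalization), so your proof is no less rigorous than the source on this point, and in the lemma's intended application within Theorem \ref{theo} the relevant functions are uniformly bounded pointwise, so the constant is indeed finite there.
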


\begin{proof}
Without loss of generality, assume that the sequences $\{f_n^{(j)}\}_{n=1}^{\infty}$ are bounded by $1$ and let $\{y_m^*\}_{m=1}^{\infty} \subseteq B_{Y^*}$ be such that $\{(\tilde{P}^*)(y_m^*) \}_{m=1}^{\infty}$ is dense in $(\tilde{P}^*)(B_{Y^*})$. Then, any $l-$tuples of bounded sequences $\{\zeta_n^{(1)}\}, \ldots, \{\zeta_n^{(l)}\}$ for which $\lim_{n \rightarrow \infty}y_m^*\tilde{P}(\zeta_n^{(1)}, \ldots,\zeta_n^{(l)})=0$ for every $m$ must satisfy $\lim_{n \rightarrow \infty}y^*\tilde{P}(\zeta_n^{(1)}, \ldots,\zeta_n^{(l)})=0$ for all $y^*$ in $B_{Y^*}$. Applying compactness, 
$$
\lim_{n \rightarrow \infty}\|\tilde{P}(\zeta_n^{(1)}, \ldots,\zeta_n^{(l)})\|_Y=0.
$$ 
Let us show that then we can then conclude that given $\epsilon>0$ there is a constant $C>0$ so that
$$
\|\tilde{P}(x^{(1)}, \ldots ,x^{(l)})\|^k_Y \leq \epsilon \left(\|x^{(1)}\|_X^k \cdot \ldots \cdot \|x^{(l)}\|_X^{k} \right)+C \sum_{m=1}^{\infty}{|y_m^*(\tilde{P}(x^{(1)}, \ldots,x^{(l)}))|^k \over 2^m},
$$
for every $x^{(1)}, \ldots,x^{(l)} \in X$. \\

Indeed, otherwise there exists $\epsilon>0$ such that, for every $n$ in $\N$, we can find $x_n^{(1)}, \ldots , x_n^{(l)}$ in $X$ with
$$
\|\tilde{P}(x_n^{(1)}, \ldots ,x_n^{(l)})\|^k_Y > \epsilon \left(\|x_n^{(1)}\|_X^k \cdot \ldots \cdot \|x_n^{(l)}\|_X^{k} \right)+n \sum_{m=1}^{\infty}{|y_m^*(\tilde{P}(x_n^{(1)}, \ldots,x_n^{(l)}))|^k \over 2^m}.
$$
Defining $\zeta_n^{(j)}={x_n^{(j)} \over \|x_n^{(j)}\|_X} \in B_X$ we obtain, for $m$ in $\N$, 
$$
\lim_{n \rightarrow \infty}n{|y_m^*(\tilde{P}(\zeta_n^{(1)}, \ldots,\zeta_n^{(l)}))|^k \over 2^m}+\epsilon \leq \|\tilde{P}\|^k_{X \times \ldots \times X \rightarrow Y},
$$
so that $\lim_{n \rightarrow \infty}|y_m^*(\tilde{P}(\zeta_n^{(1)}, \ldots,\zeta_n^{(l)}))|=0$ and, therefore, $\lim_{n \rightarrow \infty}\|\tilde{P}(x_n^{(1)}, \ldots ,x_n^{(l)})\|^k_Y=0$, contradicting $\|\tilde{P}(\zeta_n^{(1)}, \ldots,\zeta_n^{(l)}))\|_Y^k>\epsilon$ for every $n$. \\
Finally, given $\epsilon>0$, just write
$$
\int_0^{2\pi}\|\tilde{P}(f_n^{(1)}(e^{it}),\ldots,f_n^{(l)}(e^{it}))\|_Y^kdt \leq 2\pi \epsilon+C \sum_{m=1}^{\infty}{1\over 2^m}\int_0^{2\pi}|y_m^*\tilde{P}(f_n^{(1)}(e^{it}),\ldots,f_n^{(l)}(e^{it}))|^k,
$$
and the result follows.
\end{proof}

In the results to come, we shall use the following lemma, whose proof can be found in \cite{CwiKal}, Lemma 2-(i): 

\begin{lemma} \label{lem2} Let $X_0, \, X_1$ be a compatible couple of Banach spaces. For each $0<\theta<1$ there is a constant $C=C(\theta)$ such that, for every $\phi \in \F\{X_0,X_1\}$,
$$
\|\phi(e^{\theta})\|_{\X_{\theta}} \leq C \left[\int_0^{2\pi}\|\phi(e^{it})\|_{X_0}{dt \over 2\pi} \right]^{1-\theta}\left[\int_0^{2\pi}\|\phi(e^{1+it})\|_{X_1}{dt \over 2\pi} \right]^{\theta}.
$$
In particular, for all $x \in X_0 \cap X_1$,
$$
\|x\|_{\X_{\theta}} \leq C\|x\|_{X_0}^{1-\theta}\|x\|_{X_1}^{\theta}.
$$

\end{lemma}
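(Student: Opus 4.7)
The plan is to construct an auxiliary scalar holomorphic function $h:\D\to\C$, continuous on $\overline{\D}$, so that $\psi:=h\phi$ has $\F\{X_0,X_1\}$-norm at most $1$ and $\phi(e^\theta)$ is recovered from $\psi(e^\theta)$ by dividing by the constant $h(e^\theta)$. Writing $A$ and $B$ for the two $L^1$-averages appearing on the right-hand side, the estimate $|h(e^\theta)|^{-1}\leq C(\theta)A^{1-\theta}B^\theta$ will then follow from a Poisson-integral representation for $\log|h(e^\theta)|$ combined with Jensen's inequality.

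Assume first $A,B>0$ (otherwise $\phi\equiv 0$ by continuity of its boundary traces and the uniqueness principle for holomorphic Banach-valued maps), and regularize by replacing $\|\phi(\cdot)\|_{X_j}$ with $\max(\|\phi(\cdot)\|_{X_j},\epsilon)$, letting $\epsilon\to 0^+$ at the end, so that $u_0(t):=\log\|\phi(e^{it})\|_{X_0}$ and $u_1(t):=\log\|\phi(e^{1+it})\|_{X_1}$ may be assumed bounded. Let $U$ be the solution of the Dirichlet problem on $\D$ with boundary values $u_0,u_1$. Its harmonic conjugate need not be single-valued on the doubly-connected domain $\D$, but adding a term $c\log|z|$ with an appropriate $c\in[0,1)$ makes the total conjugate single-valued modulo $2\pi$ (the conjugate of $\log|z|$ being $\arg z$, which has period $2\pi$ around $\D$). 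Thus $h(z):=e^{-H(z)}$ is a well-defined holomorphic function on $\D$, where $H$ is the holomorphic function with $\re H=U+c\log|z|$, and a direct computation gives
$$
|h(e^{it})|\,\|\phi(e^{it})\|_{X_0}\leq 1,\qquad |h(e^{1+it})|\,\|\phi(e^{1+it})\|_{X_1}\leq e^{-c}\leq 1.
$$
Hence $\psi=h\phi\in\F\{X_0,X_1\}$ with $\|\psi\|_{\F\{X_0,X_1\}}\leq 1$. Since $\hat\psi(z):=\psi(z)/h(e^\theta)$ also lies in $\F\{X_0,X_1\}$ and represents $\phi(e^\theta)$, one obtains $\|\phi(e^\theta)\|_{\X_\theta}\leq |h(e^\theta)|^{-1}=e^{U(e^\theta)+c\theta}$.

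It remains to bound $e^{U(e^\theta)}$. The Poisson representation for the annulus expresses $U(e^\theta)$ as an integral against harmonic measure concentrated on the two boundary circles, with total masses $1-\theta$ and $\theta$ respectively (the values being pinned down by testing on the harmonic functions $1$ and $\log|z|$). Renormalizing each restriction to a probability measure and applying Jensen's inequality to $\exp$ converts the $u_j$'s back to the norms of $\phi$; the uniform $L^\infty$-bounds on the Poisson kernels at the fixed interior point $e^\theta$ then reduce the resulting integrals to constant multiples of $A$ and $B$, producing the required inequality after absorbing $e^{c\theta}\leq e$ into the constant $C(\theta)$. The ``in particular'' statement follows by specializing to the constant representative $\phi(z)\equiv x$ of $x\in X_0\cap X_1$. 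The principal technical obstacle is ensuring single-valuedness of $h$ on the doubly-connected domain $\D$, handled by the period-matching adjustment via $\log|z|$; the $\epsilon$-regularization is routine.
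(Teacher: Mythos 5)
The paper never proves this lemma internally -- it simply cites \cite{CwiKal}, Lemma 2(i) -- so your argument has to stand on its own, and in outline it does: it is the standard outer-function/harmonic-measure proof, essentially the one behind the cited result. The quantitative core is correct: harmonic measure of $\D$ at $e^{\theta}$ assigns masses $1-\theta$ and $\theta$ to the inner and outer circles (pinned down, as you say, by testing against $1$ and $\log|z|$); Jensen's inequality for the normalized restrictions turns the logarithmic averages back into the $L^1$-averages $A$ and $B$; the annulus Poisson kernels at the fixed interior point $e^{\theta}$ are bounded by constants depending only on $\theta$; and the period obstruction to a single-valued conjugate on the doubly connected $\D$ is correctly removed by adding $c\log|z|$ with $c\in[0,1)$, the resulting losses $e^{-c}\le 1$ on the outer circle and $e^{c\theta}\le e$ at $e^{\theta}$ being absorbed into $C(\theta)$. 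The degenerate case $A=0$ or $B=0$ and the passage $\epsilon\to 0^+$ are handled acceptably, and the ``in particular'' statement does follow from the constant representative.

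The one genuine gap is the assertion that $\psi=h\phi$ (and hence $\psi/h(e^{\theta})$) belongs to $\F\{X_0,X_1\}$. Membership requires $h$ to extend continuously to $\overline{\D}$: it is the boundary values of the competitor, not just their moduli, that must be continuous traces of the holomorphic function, and otherwise the infimum defining $\|\cdot\|_{\X_{\theta}}$ cannot be tested on $\psi/h(e^{\theta})$. Your $\epsilon$-truncation makes $U$ continuous on $\overline{\D}$, but the harmonic conjugate of a merely continuous boundary function need not be bounded, let alone continuous, up to the boundary (conjugation is unbounded on continuous functions), so $h=e^{-H}$ as constructed may have no continuous boundary restriction. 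The repair is routine but must be made explicit: after truncating at $\epsilon$, approximate each $u_j$ uniformly from above by smooth (e.g.\ Lipschitz or trigonometric-polynomial) data $u_j^{\delta}$ with $u_j\le u_j^{\delta}\le u_j+\delta$; for such boundary data the conjugate, hence $h$, is continuous on $\overline{\D}$, the pointwise bounds $|h|\,\|\phi\|_{X_j}\le 1$ on the respective circles persist, and the final estimate only picks up a factor $e^{\delta}$, which tends to $1$ as $\delta\to 0^+$. With that smoothing step inserted, the proof is complete.
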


\begin{definition} Let $X$ be a Banach space. For $N \in \N$ and $f \in \mathcal{H}(X,\C)$, define the functions $S_Nf$ as follows:
$$
S_Nf(z)=\sum_{|k| \leq N}\hat{f}(k)z^k+\sum_{N<|k| \leq 2N}\left(2-{|k| \over N} \right)\hat{f}(k)z^k.
$$ 
\end{definition}

By the uniform $L_1$-boundedness of the de la Vall\'ee Poussin kernels, if $(X_0,X_1)$ is a compatible couple of Banach spaces, there exists a constant $C$ such that $\|S_N\phi\|_{\F\{X_0,X_1\}} \leq C\|\phi\|_{\F\{X_0,X_1\}}$ for every $\phi \in \F\{X_0,X_1\}$, $N>0$ (for further details, check the comments in \cite{CwiKal} and the references therein). \\

The next lemma will provide some crucial tools for the proof of the main theorem in the next section:

\begin{lemma} \label{lemma3} Let $\mathbf{X}=(X_0,X_1)$ and $\mathbf{Y}=(Y_0,Y_1)$ be two compatible couples of Banach spaces. Let also $l \in \N$ and $P \in \mathcal{P}(^l\mathbf{X};\mathbf{Y})$ so that $P:X_0\rightarrow Y_0$ is compact. Then,

\begin{enumerate}

\item \label{3-a} The set $\left\{ \widehat{P\phi}(k) \, : \, \phi \in B_{\F\{X_0,X_1\}}, \, k \in \Z \right\}$ is relatively compact in $Y_0$.

\item \label{3-b} $\displaystyle \lim_{|k| \rightarrow \infty}\sup \left\{\|\widehat{P\phi}(k)\|_{Y_0}\, : \, \phi \in B_{\F\{X_0,X_1\}}\right\}=0$.

\item \label{3-c} For each $\delta>0$, there exists $L=L(\delta)$ so that, for every $\phi \in B_{\F\{X_0,X_1\}}$, 
$$
\text{card}\{k \in \Z \, : \, \| \widehat{P\phi}(k)\|_{Y_0} \geq \delta \} \leq L.
$$

\item \label{3-d} For each $0<\theta<1$, we have
$$
\lim_{|k| \rightarrow \infty} \sup \left\{\| \widehat{P\phi}(k)e^{k\theta}\|_{\mathbf{Y}_{\theta}} \, : \, \phi \in B_{\F\{X_0,X_1\}}\right\}=0.
$$

\end{enumerate}

\begin{proof} \textcolor{white}{hola} \\
\begin{enumerate}
\item First of all, notice that, again since $\mathfrak{F}$ (the Fourier transform) is a linear isometry from $L^2(\T,X_0+X_1)$ to $\ell^2(X_0+X_1)$ and $P$ is a compact polynomial, it follows that $\mathfrak{F}P$ is a compact operator and hence $\left\{ (\widehat{P\phi}(k))_{k=-\infty}^{\infty} \, : \, \phi \in B_{\F\{X_0,X_1\}}\right\} \subseteq Y_0^{\Z}$ is a relatively compact subset (considering the $\| \cdot \|_2$-norm and, as a consequence, in the sup norm). \\
Now, applying lemma \ref{Riemann-Lebesgue}, we find that in fact we have 
$$
\left\{ (\widehat{P\phi}(k))_{k=-\infty}^{\infty} \, : \, \phi \in B_{\F\{X_0,X_1\}}\right\} \subseteq c_0(Y_0).
$$
{\bf Claim:} If $W$ is a Banach space and $K \subseteq \big(c_0(W), \| \cdot \|_{\infty}\big)$ is compact, then $\{k(n) \, : \, k \in K, \, n \in \Z \}$ is a relatively compact subset of $W$. \\
Indeed, assume $\{k_j\}_{j=1}^{\infty} \subseteq K$ and $\{m_j\}_{j=1}^{\infty} \subseteq \Z$. Then, we know there exists $\{n_j\}_{j=1}^{\infty} \subseteq \N$ so that $\|k_{n_j}-k_{n_l}\|_{\infty} \xrightarrow[j,l \rightarrow \infty]\,0$. Let us show that $\{k_{n_j}(m_{n_j})\}$ converges. \\
Let $\epsilon >0$. Then, there exists $j_0$ such that, for every $j,l \geq j_0$ $\|k_{n_j}-k_{n_l}\|_{\infty}<{\epsilon \over 3}$. Let next $j_{\epsilon} \geq j_0$ so that $\|k_{n_{j_0}}(m_{n_j})\|_W<{\epsilon \over 6}$ for every $j \geq j_{\epsilon}$, and take $j,l \geq j_{\epsilon}$. Then
$$
\begin{aligned}
\|k_{n_j}(m_{n_j})-k_{n_l}(m_{n_l})\|_W &\leq \|k_{n_j}(m_{n_j})-k_{n_{j_0}}(m_{n_j})\|_W+\|k_{n_{j_0}}(m_{n_l})\|_W \\
& \quad+\|k_{n_{j_0}}(m_{n_l})\|_W+\|k_{n_l}(m_{n_l})-k_{n_{j_0}}(m_{n_l})\|_W \\
&<\epsilon.
\end{aligned}
$$

\item Let us prove a more general result, namely that if $\{x_j \}_{j \in J} \subseteq c_0(Y_0)$ is a compact subset, then
$$
\lim_{|n| \rightarrow \infty} \sup_{j \in J}x_j(n)=0.
$$
Indeed, assume otherwise that we can find $\epsilon>0$ such that, for every natural number $n$ there exists $N_n \geq n$ and $j_{N_n} \in J$ so that $\|x_{j_{N_n}}(N_n)\|_{Y_0} \geq \epsilon$. Without loss of generality, we can assume that $\{N_n \}_{n=1}^{\infty}$ is an increasing sequence. \\
Now, by compactness, we can find $j_0 \in J$ and a subsequence (which, to simplify the notation, we will still denote by $\{x_{j_{N_n}}\}$) so that
$$
x_{j_{N_n}} \xrightarrow[n \rightarrow \infty]{\| \cdot \|_{\infty}}x_{j_0}.
$$
Hence, given ${\epsilon \over 2}>0$, there exists a natural number $n_{\epsilon}$ so that, for every $n \geq n_{\epsilon}$ we can guarantee $\|x_{j_0}-x_{j_{N_n}}\|_{\infty}<{\epsilon \over 2}$. In other words, for every $n \geq n_{\epsilon}$ and every integer $k$ we have
$$
\|x_{j_0}(k)-x_{j_{N_n}}(k)\|_{Y_0}<{\epsilon \over 2}.
$$
In particular, for every $n \geq n_{\epsilon}$ we have
$$
\|x_{j_0}(N_n)\|_{Y_0} \geq {\epsilon \over 2},
$$
which contradicts $x_{j_0} \in c_0(Y_0)$. The argument for the case when $n \rightarrow -\infty$ follows in the same way.

\item Using compactness of $P^*$ we can find $y_1^*, \ldots, y_N^*$ in $B_{Y_0^*}$ so that
$$
P^*\overline{B}_{Y^*_0} \subseteq \bigcup_{j=1}^N B\left(P^*y_j^*;{\delta \over 2}\right).
$$
Therefore, if $y^* \in B_{Y^*_0}$, there exists $1 \leq j_0 \leq N$ so that, for every $x \in B_{X_0}$, 
$$
|(P^*y_{j_0}^*-P^*y^*)(x)|<{\delta \over 2}.
$$
Hence, for every $k \in \Z$,
$$
\begin{aligned}
|y^*\widehat{P\phi}(k)|&=\bigg|{1 \over 2\pi i}\int_0^{2\pi}y^*P(\phi(e^{it}))e^{-ikt}dt \bigg| \\
& \leq \bigg|{1 \over 2\pi i}\int_0^{2\pi}\left[y^*(P\phi(e^{it}))-y_{j_0}^*(P\phi(e^{it})) \right]e^{-ikt}dt \bigg| \\
& \quad +\bigg|{1 \over 2\pi i}\int_0^{2\pi} y_{j_0}^*(P\phi(e^{it})) e^{-ikt}dt \bigg| \\
& \leq {\delta \over 2}+\max_{1 \leq j \leq N}|y_{j}^*\widehat{P\phi}(k)|.
\end{aligned}
$$
From this
$$
\|\widehat{P\phi}(k)\|_{Y_0}=\sup_{y^* \in B_{Y_0^*}}|y^*\widehat{P\phi}(k)| \leq {\delta \over 2}+\max_{1 \leq j \leq N}|y_{j}^*\widehat{P\phi}(k)|.
$$
Let next $\phi \in B_{\F\{X_0,X_1\}}$ and define $A^{(\phi)}_{\delta}=\{k \in \Z \, : \, \| \widehat{P\phi}(k)\|_{Y_0} \geq \delta \}$. Then, for each $k \in A^{(\phi)}_{\delta}$,
$$
\sum_{j=1}^N |y_j^* \widehat{P\phi}(k)|^2 \geq {1 \over 4}\delta^2
$$
Apply Parseval's Identity \eqref{Parseval} to obtain
$$
\begin{aligned}
{1 \over 4} \delta^2 \text{card}(A^{(\phi)}_{\delta}) &\leq \sum_{j=1}^N \sum_{k \in A^{(\phi)}_{\delta}}|y_j^* \widehat{P\phi}(k)|^2 \leq \sum_{j=1}^N \sum_{k \in \Z}|y_j^* \widehat{P\phi}(k)|^2 \\
& = \sum_{j=1}^N {1 \over 2\pi}\int_0^{2\pi} |y_j^*(P\phi(e^{it}))|^2dt \leq N\|P\|^2,
\end{aligned}
$$
for every $\phi \in B_{\F\{X_0,X_1\}}$.

\item First, compute
$$
\|\widehat{P\phi}(k)\|_{\mathbf{Y}_{\theta}} \leq C\|\widehat{P\phi}(k)\|_{Y_0}^{1-\theta}\|\widehat{P\phi}(k)\|^{\theta}_{Y_1} \leq C\|\widehat{P\phi}(k)\|_{Y_0}^{1-\theta}e^{-k\theta},
$$
using Lemma \ref{lem2}  and the change $u=i+t$, for every $k \in \Z$. \\
Therefore, using part \ref{3-b}, we can conclude $\|\widehat{P\phi}(k)e^{k\theta}\|_{\mathbf{Y}_{\theta}} \xrightarrow[|k| \rightarrow \infty]\,0$. \\
Assume next (in order to simplify the notation) that $\|P\|_{X_0 \rightarrow Y_0}\leq 1$, so that $\|\widehat{P\phi}(k_n)\|_{Y_0} \leq 1$ for every $\phi \in B_{\F\{X_0,X_1\}}$ and $n \in \N$. Assume also that there exists $\delta>0$, $\{k_n\} \searrow -\infty$ and $\{\phi_n\} \subseteq B_{\F\{X_0,X_1\}}$ so that $k_n < 2k_{n-1}$ and $\|\widehat{P\phi_n}(k_n)e^{k_n\theta}\|_{\mathbf{Y}_{\theta}} \geq \delta$. \\
Now, given $n$ and $\epsilon>0$, we can use part \ref{3-a} to find $m, \, p \in \N$ so that $m>p \geq n$ and $\|\widehat{P\phi}_m(k_m)-\widehat{P\phi}_p(k_p)\|_{Y_0}<\epsilon$. \\
Also, $\|\widehat{P\phi}_m(k_m)e^{k_m}\|_{Y_1} \leq 1$ and $\|\widehat{P\phi}_p(k_p)e^{k_m}\|_{Y_1} \leq e^{k_m-k_p} \leq 1$. Then,
$$
\begin{aligned}
\|\widehat{P\phi}_m(k_m)-\widehat{P\phi}_p(k_p)\|_{\mathbf{Y}_{\theta}} &\leq C\| \widehat{P\phi}_m(k_m)-\widehat{P\phi}_p(k_p)\|_{Y_0}^{1-\theta}\left[\|\widehat{P\phi}_m(k_m)\|_{Y_1}+\|\widehat{P\phi}_p(k_p)\|_{Y_1} \right]^{\theta} \\
& \leq 2^{\theta}C\epsilon^{1-\theta}e^{-\theta k_m}.
\end{aligned}
$$
Therefore,
$$
\begin{aligned}
\|\widehat{P\phi}_m(k_m)e^{k_m\theta}\|_{\mathbf{Y}_{\theta}} & \leq \| [\widehat{P\phi}_m(k_m)-\widehat{P\phi}_p(k_p)]e^{k_m\theta}\|_{\mathbf{Y}_{\theta}}+\|\widehat{P\phi}_p(k_p)e^{k_m\theta}\|_{\mathbf{Y}_{\theta}} \\
& \leq 2^{\theta}C\epsilon^{1-\theta}+e^{k_m\theta}C\|\widehat{P\phi}_p(k_p)\|_{Y_0}^{1-\theta}\|\widehat{P\phi}_p(k_p)\|_{Y_1}^{\theta} \\
& \leq 2^{\theta}C\epsilon^{1-\theta}+Ce^{\theta(k_m-k_p)} \\
& \leq 2^{\theta}C\epsilon^{1-\theta}+Ce^{\theta k_n} \xrightarrow[n \rightarrow \infty]\,2^{\theta}C\epsilon^{1-\theta} \xrightarrow[\epsilon \rightarrow 0]\,0,
\end{aligned}
$$
which is a contradiction to the assumption $\|\widehat{P\phi}_n(k_n)e^{k_n\theta}\|_{\mathbf{Y}_{\theta}} \geq \delta>0$.

\end{enumerate}

\end{proof}

\end{lemma}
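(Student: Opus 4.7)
The plan is to establish (a) and (c) directly from the Bochner-integral representation and Parseval's identity respectively, then to pin down (b) as the technical heart of the lemma, and finally to deduce (d) from (b) together with the log-convexity of Lemma~\ref{lem2} and a Cauchy-type contour shift.

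For (a), I recognize $\widehat{P\phi}(k)=(2\pi i)^{-1}\int_0^{2\pi}e^{-ikt}P(\phi(e^{it}))\,dt$ as a Bochner integral. Since $\|\phi(e^{it})\|_{X_0}\le\|\phi\|_{\F\{X_0,X_1\}}\le 1$ and $P:X_0\to Y_0$ is compact, the integrand takes values in the circled hull of the compact set $\overline{P(B_{X_0})}$, which is itself compact as the continuous image of the compact product $\{|\alpha|\le 1\}\times\overline{P(B_{X_0})}$; its closed convex hull $K\subseteq Y_0$ is compact by Mazur's theorem, and all the values $\widehat{P\phi}(k)$ lie in $K$ regardless of $\phi$ and $k$. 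For (c), the key observation is that the adjoint $P^*:Y_0^*\to\mathcal{P}(^l X_0)$ is compact, so I pick a $\delta/2$-net $y_1^*,\ldots,y_N^*\in B_{Y_0^*}$ of $P^*(B_{Y_0^*})$ and observe that $|(y^*-y_j^*)\widehat{P\phi}(k)|\le\delta/2$ uniformly in $\phi$ and $k$ whenever $\|P^*y^*-P^*y_j^*\|<\delta/2$, since $\|\phi(e^{it})\|_{X_0}^l\le 1$. Hence $\|\widehat{P\phi}(k)\|_{Y_0}\ge\delta$ forces $\max_j|y_j^*\widehat{P\phi}(k)|\ge\delta/2$, so $\sum_j|y_j^*\widehat{P\phi}(k)|^2\ge\delta^2/4$. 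Summing over $k\in A_\delta^\phi:=\{k:\|\widehat{P\phi}(k)\|_{Y_0}\ge\delta\}$ and applying Parseval's identity to each scalar function $t\mapsto y_j^*(P\phi(e^{it}))$ (whose $L^2$-norm is at most $\|P\|_{X_0\to Y_0}$) yields $|A_\delta^\phi|\cdot\delta^2/4\le N\|P\|_{X_0\to Y_0}^2$, a bound independent of $\phi$.

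For (b) --- the main obstacle --- the same $\varepsilon$-net reduces the question to showing $\sup_\phi|y_j^*\widehat{P\phi}(k)|\to 0$ as $|k|\to\infty$ for each $j$. Lemma~\ref{Riemann-Lebesgue} supplies this pointwise in $\phi$, but lifting the decay to uniformity in $\phi$ is the delicate point. The strategy I would try is to establish that the family $\{(\widehat{P\phi}(k))_k:\phi\in B_{\F\{X_0,X_1\}}\}$ is relatively compact in $c_0(Y_0)$ with the sup norm, and then invoke the elementary principle that any $\|\cdot\|_\infty$-compact subset $K\subseteq c_0(W)$ satisfies $\sup_{x\in K}\|x(n)\|_W\to 0$ as $|n|\to\infty$ (provable by extracting a convergent subsequence and noting that its $c_0$-limit cannot witness a non-vanishing value at arbitrarily large indices, contradicting membership in $c_0$). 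The compactness claim at the level of $c_0(Y_0)$ should follow from compactness of $P$ lifted through the Fourier transform $\mathfrak{F}$ on $L^2(\T,\cdot)$, and it is here, in controlling the $L^2$-tails of $\{P\phi:\phi\in B_{\F\{X_0,X_1\}}\}$, that the bulk of the technical work sits.

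Finally, for (d), I invoke the log-convexity inequality $\|\widehat{P\phi}(k)\|_{\Y_\theta}\le C\|\widehat{P\phi}(k)\|_{Y_0}^{1-\theta}\|\widehat{P\phi}(k)\|_{Y_1}^\theta$ of Lemma~\ref{lem2}, noting that $\widehat{P\phi}(k)\in Y_0\cap Y_1$ because the defining integral can be realized on either boundary circle. To estimate the $Y_1$-norm I shift the Fourier contour from $|z|=1$ to $|z|=e$, which is legitimate since $P\phi$ is holomorphic on $\D$; the change of variables $u=1+t$ produces $\|\widehat{P\phi}(k)\|_{Y_1}\le e^{-k}\|P\|_{X_1\to Y_1}$. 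Multiplying through by $e^{k\theta}$ cancels the $e^{-k\theta}$ on the right, leaving $\|\widehat{P\phi}(k)e^{k\theta}\|_{\Y_\theta}\le C\|P\|_{X_1\to Y_1}^\theta\|\widehat{P\phi}(k)\|_{Y_0}^{1-\theta}$, which tends to zero uniformly in $\phi$ by (b), regardless of the sign of $k$.
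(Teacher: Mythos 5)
Your parts (a) and (c) are fine: the circled-hull-plus-Mazur argument for (a) is correct (and arguably cleaner than the paper's route through compactness of $\mathfrak{F}P$), and your (c) is the same $\delta/2$-net and Parseval argument the paper uses. The genuine gap is in (b), and it propagates to half of (d). For (b) you reduce everything to the claim that $\left\{(\widehat{P\phi}(k))_{k\in\Z}\,:\,\phi\in B_{\F\{X_0,X_1\}}\right\}$ is relatively compact in $\big(c_0(Y_0),\|\cdot\|_{\infty}\big)$, and you explicitly leave that claim unproved (``should follow from compactness of $P$ lifted through the Fourier transform''). That claim is precisely the heart of the lemma, so the proof of (b) is missing as it stands; worse, the claim is not available in the stated generality. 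Take $X_0=X_1=Y_0=Y_1$, $P(w)=x^*(w)^l\,y$ with $\|x^*\|=1$, $y\neq 0$, and $\phi_n(z)=z^{-n}x$ with $x^*(x)=1$, $\|x\|_{X_0}\le 1$; then $\phi_n\in B_{\F\{X_0,X_1\}}$ for $n\ge 0$, while the coefficient sequence of $P\phi_n$ is supported at the single index $-ln$ with a value of norm $\|y\|$. Hence the family of coefficient sequences is not totally bounded in the sup norm, and $\sup_{\phi}\|\widehat{P\phi}(k)\|_{Y_0}$ does not tend to $0$ as $k\to-\infty$. The uniform decay is genuinely one-sided: it holds as $k\to+\infty$, and a correct proof combines (a) with your own contour-shift bound $\|\widehat{P\phi}(k)\|_{Y_1}\le e^{-k}\|P\|_{X_1\to Y_1}$: if $k_n\to+\infty$ and $\|\widehat{P\phi_n}(k_n)\|_{Y_0}\ge\epsilon$, extract by (a) a subsequence of the values converging in $Y_0$ to some $y\neq 0$; since the $Y_1$-norms tend to $0$, the limit must be $0$ in $Y_0+Y_1$, a contradiction.

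Because the uniform decay in (b) is only usable for $k\to+\infty$, your proof of (d) --- which after the contour shift reduces to $e^{k\theta}\|\widehat{P\phi}(k)\|_{\Y_{\theta}}\le C\|P\|_{X_1\to Y_1}^{\theta}\|\widehat{P\phi}(k)\|_{Y_0}^{1-\theta}$ and then cites (b) ``regardless of the sign of $k$'' --- only covers the case $k\to+\infty$. The case $k\to-\infty$ requires a separate argument, and this is exactly why the paper, following Cwikel and Kalton, adds the contradiction argument along a lacunary sequence $k_n\searrow-\infty$ with $k_n<2k_{n-1}$: it plays the relative compactness of the values from part (a) against the estimate $\|\widehat{P\phi}(k)\|_{Y_1}\le e^{-k}$, comparing two far-apart indices through Lemma \ref{lem2} to force $e^{k_n\theta}\|\widehat{P\phi_n}(k_n)\|_{\Y_{\theta}}$ below any $\delta>0$. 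To complete your proposal you would need to supply both the one-sided proof of (b) sketched above and this negative-$k$ argument (or a substitute for it) in (d).
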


\begin{lemma} \label{4} Let $\mathbf{X}=(X_0,X_1)$ and $\mathbf{Y}=(Y_0,Y_1)$ be two compatible couples of Banach spaces and let $l \in \N$ and $P \in \mathcal{P}(^l\mathbf{X};\mathbf{Y})$ so that $P:X_0 \rightarrow Y_0$ is compact. For $0<\theta<1$ and $N \in \N$, the set $\{S_N \big(P\phi\big)(e^{\theta}) \, : \, \phi \in B_{\F\{X_0,X_1\}} \}$ is relatively compact in $\mathbf{Y}_{\theta}$.

\end{lemma}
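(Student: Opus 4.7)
The de la Vallée--Poussin partial sum $S_N(P\phi)(e^{\theta})$ is a finite linear combination
\[
S_N(P\phi)(e^{\theta}) = \sum_{|k|\leq 2N}\alpha_{k,N,\theta}\,\widehat{P\phi}(k),
\]
whose scalar coefficients $\alpha_{k,N,\theta}$ depend only on $k$, $N$, $\theta$. Since finite sums preserve relative compactness (the summation map $\mathbf{Y}_{\theta}^{4N+1}\to\mathbf{Y}_{\theta}$ is continuous), it is enough to show that for each fixed $k\in\Z$ the set
\[
A_k := \{\widehat{P\phi}(k) : \phi\in B_{\F\{X_0,X_1\}}\}
\]
is relatively compact in $\mathbf{Y}_{\theta}$.

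Lemma \ref{lemma3}, part \ref{3-a}, already provides relative compactness of $A_k$ in $Y_0$, so the plan is to upgrade this to compactness in $\mathbf{Y}_{\theta}$ via the interpolation inequality of Lemma \ref{lem2}. For that I also need a uniform $Y_1$-bound on $A_k$. Since $P\circ\phi$ is holomorphic on $\D$ into $Y_0+Y_1$ and extends continuously to $\{|z|=e\}$ with values in $Y_1$, the Banach-valued Cauchy theorem applied to $z\mapsto z^{-k-1}P(\phi(z))$ gives the alternative representation
\[
\widehat{P\phi}(k)=\frac{e^{-k}}{2\pi}\int_0^{2\pi}e^{-ikt}P(\phi(e^{1+it}))\,dt,
\]
from which $\|\widehat{P\phi}(k)\|_{Y_1}\leq e^{-k}\|P\|_{X_1\to Y_1}$ uniformly for $\phi\in B_{\F\{X_0,X_1\}}$. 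In particular, $\widehat{P\phi}(k)\in Y_0\cap Y_1$.

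Given any sequence $(\widehat{P\phi_n}(k))_n\subseteq A_k$, I extract a subsequence that is Cauchy in $Y_0$; its successive differences are $2e^{-k}\|P\|_{X_1\to Y_1}$-bounded in $Y_1$, so Lemma \ref{lem2} yields
\[
\|\widehat{P\phi_n}(k)-\widehat{P\phi_m}(k)\|_{\mathbf{Y}_{\theta}}\leq C\|\widehat{P\phi_n}(k)-\widehat{P\phi_m}(k)\|_{Y_0}^{1-\theta}\bigl(2e^{-k}\|P\|_{X_1\to Y_1}\bigr)^{\theta}\longrightarrow 0
\]
as $n,m\to\infty$, so the subsequence is Cauchy, hence convergent, in the Banach space $\mathbf{Y}_{\theta}$. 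I expect the only delicate point to be justifying the contour shift that produces the $Y_1$-valued integral representation; this rests on Cauchy's theorem for Banach-valued holomorphic functions on the annulus $\D$ that are continuous up to both boundary circles, applied separately in $Y_0+Y_1$. Once that is in hand, the rest is an interpolation estimate plus the finite-sum structure of $S_N$.
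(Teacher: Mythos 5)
Your proof is correct and rests on the same two pillars as the paper's own argument: the $Y_0$-precompactness of the Fourier coefficients supplied by Lemma \ref{lemma3}, part \ref{3-a}, together with a uniform $Y_1$-bound and the interpolation inequality of Lemma \ref{lem2} (the contour-shift bound $\|\widehat{P\phi}(k)\|_{Y_1}\leq e^{-k}\|P\|_{X_1\rightarrow Y_1}$ you justify via the vector-valued Cauchy theorem is exactly the ``change $u=i+t$'' the paper invokes in Lemma \ref{lemma3}, part \ref{3-d}). The only difference is organizational: you establish relative compactness of each coefficient set $A_k$ in $\mathbf{Y}_{\theta}$ and then use the finite-sum structure of $S_N$, whereas the paper extracts a single subsequence making all $4N+1$ coefficients simultaneously Cauchy in $Y_0$ with geometric rate and applies the integral form of Lemma \ref{lem2} to the whole de la Vall\'ee Poussin sum viewed as an element of $\F\{Y_0,Y_1\}$.
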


\begin{proof} Suppose $\{\phi_n\}_{n=1}^{\infty} \subseteq B_{\F\{X_0,X_1\}}$. By lemma \ref{lemma3}, part \ref{3-a}, we can pass to a subsequence $\{ \psi_n\}$ such that, for every $|k| \leq 2N$,
$$
\| \widehat{P\psi}_n(k)-\widehat{P\psi}_{n+1}(k)\|_{Y_0}<{1 \over 2^n},
$$
for all $n \in \N$. Then,
$$
\|S_N\big(P\psi_n\big)(z)-S_N\big(P\psi_{n+1}\big)(z)\|_{Y_0} \leq {4N+1 \over 2^n},
$$
for $|z|=1$ and all natural numbers $n$. Also, for $|z|=e$, we have $\|S_N\big(P\psi_n\big)(z)-S_N\big(P\psi_{n+1}\big)(z)\|_{Y_1} \leq C_1$, for some suitable constant $C_1>0$ and natural number $n$. \\
Thus, by lemma \ref{lem2},
$$
\|S_N\big(P\psi_n\big)(e^{\theta})-S_N\big(P\psi_{n+1}\big)(e^{\theta})\|_{\mathbf{Y}_{\theta}} \leq {CC_1(4N+1)^{1-\theta} \over 2^{n(1-\theta)}},
$$
so $\{ S_N\big(P\psi_n\big)(e^{\theta})\}_{n=1}^{\infty}$ is convergent.

\end{proof}

\begin{lemma} \label{5} Let $\mathbf{X}=(X_0,X_1)$ and $\mathbf{Y}=(Y_0,Y_1)$ be two compatible couples of Banach spaces and let $\mho$ be a subset of $B_{\F\{X_0,X_1\}}$. Choose $0<\theta<1$ and define $\mho_{\theta}=\{\phi(e^{\theta}) \, : \, \phi \in \mho \}$. Assume that every sequence $\{\phi_n\} \subseteq \mho$ satisfies
$$
\lim_{n \rightarrow \infty} \|P\phi_n(e^{\theta})-S_n\big(P\phi_n\big)(e^{\theta})\|_{\mathbf{Y}_{\theta}}=0.
$$
Then, $P(\mho_{\theta})$ is relatively compact in $\mathbf{Y}_{\theta}$.

\end{lemma}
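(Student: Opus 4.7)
The plan is to reduce the statement to Lemma~\ref{4} via a uniformity argument. First, I would extract from the sequence-wise hypothesis a uniform rate of convergence over $\mho$: given $\epsilon>0$, there exists $N_0\in\N$ such that
$$
\|P\phi(e^{\theta})-S_n(P\phi)(e^{\theta})\|_{\mathbf{Y}_{\theta}}<\epsilon
$$
for every $n\geq N_0$ and every $\phi\in\mho$. To justify this, I would argue by contradiction: if no such $N_0$ existed for some $\epsilon>0$, I could pick an increasing sequence of indices $n_k\to\infty$ and elements $\psi_{n_k}\in\mho$ with $\|P\psi_{n_k}(e^{\theta})-S_{n_k}(P\psi_{n_k})(e^{\theta})\|_{\mathbf{Y}_{\theta}}\geq\epsilon$, then complete the sequence by choosing $\psi_n$ arbitrarily in $\mho$ at the missing indices. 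The resulting sequence $\{\psi_n\}\subseteq\mho$ would then violate the hypothesis of the lemma.

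With this uniform estimate in hand, specializing to $n=N_0$ gives
$$
\sup_{\phi\in\mho}\|P\phi(e^{\theta})-S_{N_0}(P\phi)(e^{\theta})\|_{\mathbf{Y}_{\theta}}\leq\epsilon.
$$
By Lemma~\ref{4}, the set $\{S_{N_0}(P\phi)(e^{\theta})\,:\,\phi\in B_{\F\{X_0,X_1\}}\}$ is relatively compact in $\mathbf{Y}_{\theta}$, and since $\mho\subseteq B_{\F\{X_0,X_1\}}$, the subset $\{S_{N_0}(P\phi)(e^{\theta})\,:\,\phi\in\mho\}$ admits a finite $\epsilon$-net. Combining this net with the uniform bound via the triangle inequality, the same centers, now viewed as a $2\epsilon$-net, cover $P(\mho_{\theta})=\{P\phi(e^{\theta})\,:\,\phi\in\mho\}$. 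Since $\epsilon>0$ was arbitrary, $P(\mho_{\theta})$ is totally bounded in the Banach space $\mathbf{Y}_{\theta}$, hence relatively compact.

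The only point that requires care is the passage from the pointwise (sequence-wise) hypothesis to the uniform rate; once that is established, the remainder is a clean two-step approximation, first replacing $P\phi(e^{\theta})$ by $S_{N_0}(P\phi)(e^{\theta})$ using the uniform bound and then approximating the resulting set by a finite net via Lemma~\ref{4}. I expect no substantial obstacle beyond the bookkeeping in the contradiction argument for the uniform rate.
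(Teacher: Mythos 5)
Your proof is correct. The first step --- upgrading the sequence-wise hypothesis to uniform convergence over $\mho$ by a contradiction argument (completing the chosen subsequence to a genuine sequence indexed by all of $\N$) --- is exactly the opening move of the paper's proof, and your bookkeeping there is if anything slightly more careful than the paper's. Where you diverge is in how compactness is then extracted from Lemma \ref{4}: the paper fixes a sequence $\{\phi_n\}\subseteq\mho$ and runs a diagonal argument, extracting nested subsequences so that $S_N\big(P\phi_{n_{k,k}}\big)(e^{\theta})$ converges for every $N$, and then shows $\{P\phi_{n_{k,k}}(e^{\theta})\}$ is Cauchy by a three-term triangle inequality; you instead fix $\epsilon>0$, use the uniform bound at a single index $N_0$, cover $\{S_{N_0}\big(P\phi\big)(e^{\theta})\,:\,\phi\in\mho\}$ by a finite $\epsilon$-net via Lemma \ref{4}, and observe that the same centers form a $2\epsilon$-net for $P(\mho_{\theta})$, so that $P(\mho_{\theta})$ is totally bounded in the Banach space $\mathbf{Y}_{\theta}$ and hence relatively compact. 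Your route is a bit more economical: it invokes Lemma \ref{4} for only one value of $N$ per $\epsilon$ and dispenses with the diagonalization entirely, at the negligible cost of appealing to the equivalence of total boundedness and relative compactness in a complete space, whereas the paper's argument produces the convergent subsequence explicitly. Both arguments are sound and rest on the same two pillars, the uniformization of the hypothesis and Lemma \ref{4}.
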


\begin{proof} 
First of all, we remark that the hypotheses imply that $\lim_{n \rightarrow \infty}\|P\phi(e^{\theta})-S_n\big(P\phi\big)(e^{\theta})\|_{\mathbf{Y}_{\theta}}=0$ uniformly for $\phi \in \mho$. Indeed, otherwise we can find $\epsilon>0$ and two subsequences, $\{N_n\}_{n=1}^{\infty} \subseteq \N$ and $\{\phi_{N_n}\}_{n=1}^{\infty} \subseteq \mho$, such that $\|P\phi_{N_n}(e^{\theta})-S_{N_n}\big(P\phi_{N_n}\big)(e^{\theta})\|_{\mathbf{Y}_{\theta}} \geq \epsilon$, which contradicts the assumptions. \\
Let now $\{\phi_n\} \subseteq \mho$. From Lemma \ref{4} we know that $\{S_1\big(P\phi_{n}\big)(e^{\theta})\}$ contains a subsequence $\{S_1\big(P\phi_{n_{k,1}}\big)(e^{\theta})\}_{k=1}^{\infty}$ which is convergent in $\mathbf{Y}_{\theta}$. Inductively, assume we have obtained a subsequence $\{\phi_{n_{k,j}}\}_{k=1}^{\infty}$ so that $\{S_l\big(P\phi_{n_{k,j}}\big)(e^{\theta}) \}_{j=1}^{\infty}$ converges, for $1 \leq l \leq j$. Then, using again Lemma \ref{4}, $\{S_{j+1}\big(P\phi_{n_{k,j}}\big)(e^{\theta}) \}_{k=1}^{\infty}$ contains a subsequence $\{S_{j+1}\big(P\phi_{n_{k,j+1}}\big)(e^{\theta})\}_{k=1}^{\infty}$ which is convergent in $\mathbf{Y}_{\theta}$. \\
Choose then $\{\phi_{n_{k,k}}\}_{k=1}^{\infty}$ and let us show that $\{P\phi_{n_{k,k}}(e^{\theta})\}$ is a Cauchy sequence. Indeed, let $\epsilon>0$. Then, we know there exists $N \in \N$ so that, for every $n \geq N$, 
$$
\|P\phi_{n_{k,k}}(e^{\theta})-S_n\big(P\phi_{n_{k,k}}\big)(e^{\theta})\|_{\mathbf{Y}_{\theta}}<\epsilon/3,
$$
for every $k \geq 1$. \\
Then, since $\{S_N\big(P\phi_{n_{k,N}}\big)(e^{\theta}) \}_{k=1}^{\infty}$ is convergent, there exists $\tilde{N}\geq N$ so that, if $l,m \geq \tilde{N}$, 
$$
\|S_N\big(P\phi_{n_{l,N}}\big)(e^{\theta})-S_N\big(P\phi_{n_{m,N}}\big)(e^{\theta})\|_{\mathbf{Y}_{\theta}}<\epsilon/3.
$$
Let $N_{\epsilon}=\tilde{N}$ and $i,j \geq N_{\epsilon}$. Then, $n_{i,i}=n_{l,N}$ and $n_{j,j}=n_{m,N}$ for some $l,m \geq \tilde{N}$. Hence,
$$
\begin{aligned}
\|P\phi_{n_{i,i}}(e^{\theta})-P\phi_{n_{j,j}}(e^{\theta})\|_{\mathbf{Y}_{\theta}} & \leq \|P\phi_{n_{i,i}}(e^{\theta})-S_N\big(P\phi_{n_{i,i}}\big)(e^{\theta})\|_{\mathbf{Y}_{\theta}}+\|S_N\big(P\phi_{n_{i,i}}\big)(e^{\theta})-S_N\big(P\phi_{n_{j,j}}\big)(e^{\theta})\|_{\mathbf{Y}_{\theta}} \\
& \quad+\|P\phi_{n_{j,j}}(e^{\theta})-S_N\big(P\phi_{n_{j,j}}\big)(e^{\theta})\|_{\mathbf{Y}_{\theta}} \\
&<\epsilon.
\end{aligned}
$$
\end{proof}

\section{An interpolation result for compact holomorphic functions, by the methods of Cwikel and Kalton.} \label{CwikelKalton}

The main result appears as a corollary to Theorem \ref{theo}, which is itself supported by the lemmas presented in the second half of Section \ref{firstresults}.

\begin{theorem} \label{theo} Let $\X=(X_0,X_1)$ and $\Y=(Y_0,Y_1)$ be compatible couples of Banach spaces. Let $l \in \N$ and $P\in\mathcal{P}(^l\mathbf{X};\mathbf{Y})$ so that $P:X_0 \rightarrow Y_0$ is compact and let $\mho$ be the subset of $B_{\F\{X_0,X_1\}}$ consisting of those elements $\phi$ for which the series $\sum_{k \in \Z}e^{jk}\hat{\phi}(k)$ converges unconditionally in $X_j$ and $\left\|\sum_{k \in \Z} \lambda_k e^{jk} \hat{\phi}(k)\right\|_{X_j}<1$ for $j=0, \, 1$ and for every sequence of complex scalars $\{\lambda_k\}$ so that $|\lambda_k|<1$ for all $k$. \\
Then, $P(\mho_{\theta})$ is relatively compact in $\mathbf{Y}_{\theta}$ for every $0<\theta<1$.

\end{theorem}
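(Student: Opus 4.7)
The strategy is to invoke Lemma \ref{5}: once we verify that every sequence $\{\phi_n\}\subseteq\mho$ satisfies
$$
\lim_{n\to\infty}\bigl\|P\phi_n(e^\theta) - S_n(P\phi_n)(e^\theta)\bigr\|_{\mathbf{Y}_\theta}=0,
$$
the relative compactness of $P(\mho_\theta)$ in $\mathbf{Y}_\theta$ follows automatically. Set $g_n := P\phi_n - S_n(P\phi_n) \in \F\{Y_0, Y_1\}$, which is the difference of two elements of that space; its Fourier coefficients are $\widehat{g_n}(k) = \beta_k^{(n)}\widehat{P\phi_n}(k)$, supported on $|k|>n$ with $|\beta_k^{(n)}|\leq 1$.

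Apply Lemma \ref{lem2} to $g_n$ to obtain
$$
\|g_n(e^\theta)\|_{\mathbf{Y}_\theta} \leq C \Big[\int_0^{2\pi}\|g_n(e^{it})\|_{Y_0}\tfrac{dt}{2\pi}\Big]^{1-\theta}\Big[\int_0^{2\pi}\|g_n(e^{1+it})\|_{Y_1}\tfrac{dt}{2\pi}\Big]^{\theta}.
$$
The $Y_1$-factor is uniformly bounded in $n$: since $\phi_n\in B_{\F\{X_0,X_1\}}$ gives $\|P\phi_n(e^{1+it})\|_{Y_1}\leq\|P\|_{X_1\to Y_1}$, and the de la Vallée Poussin operators $S_n$ are uniformly $L^1$-bounded, the $Y_1$-integral is at most a constant multiple of $\|P\|_{X_1\to Y_1}$. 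The task therefore reduces to proving that $\int_0^{2\pi}\|g_n(e^{it})\|_{Y_0}\,dt \to 0$ as $n\to\infty$.

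For this $Y_0$-side tail estimate I combine the compactness of $P:X_0\to Y_0$ with the Peetre-type structure of $\mho$. Lemma \ref{lemma3} supplies uniform control on the Fourier coefficients $\widehat{P\phi_n}(k)$: they lie in a fixed compact subset of $Y_0$ (part \ref{3-a}), their $Y_0$-norms tend to $0$ as $|k|\to\infty$ uniformly in $n$ (part \ref{3-b}), and for every $\delta>0$ only a uniformly bounded number of them can exceed $\delta$ in norm (part \ref{3-c}). The membership $\phi_n \in \mho$ provides the unconditional convergence of $\sum_k\hat\phi_n(k)$ in $X_0$, together with the Peetre-type uniform bound on the partial sums that makes the "tail" $R_n\phi_n$ behave like a uniformly bounded element of $\F\{X_0, X_1\}$ whose $L^\infty(\T, X_0)$-mass is concentrated at large frequencies. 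Lemma \ref{4}, which gives relative compactness of $\{S_N(P\phi_n)(e^\theta):n\in\N\}$ for each fixed $N$, complements these estimates so that the Fourier tail of $P\phi_n$ on $|z|=1$ becomes small in $L^1(\T,Y_0)$ as $n\to\infty$.

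The main obstacle is that $\phi_n$ depends on $n$ and that $P$ is genuinely nonlinear when $l\geq 2$. In the linear case $l=1$, the identity $(I-S_n)P\phi_n=P((I-S_n)\phi_n)$ transfers the tail bound from $\phi_n$ (which sits in the Peetre unit ball by the $\mho$-condition) to $P\phi_n$ through the compact operator $P$, and $L^1$-smallness follows from compactness plus unconditional convergence. For polynomial degree $l\geq 2$ this exchange is unavailable: one must work with $\widehat{P\phi_n}(k)=\sum_{k_1+\cdots+k_l=k}\tilde P(\hat\phi_n(k_1),\ldots,\hat\phi_n(k_l))$ and control the multilinear Fourier convolution uniformly in $n$. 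The natural route is to couple the unconditional summability of each factor provided by $\mho$ with the compactness of the polar form $\tilde P$ in the spirit of Lemma \ref{lem_polar}, using the test-functional reduction there to pass from scalar $y^*$-pairings to norm-level decay. Once this uniform $L^1$-smallness of $g_n(e^{it})$ in $Y_0$ is secured, the bound coming from Lemma \ref{lem2} delivers the desired $\mathbf{Y}_\theta$-limit and Lemma \ref{5} concludes that $P(\mho_\theta)$ is relatively compact.
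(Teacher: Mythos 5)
Your outer skeleton is the same as the paper's: reduce to Lemma \ref{5} by showing $\lim_{n\to\infty}\|P\phi_n(e^\theta)-S_n(P\phi_n)(e^\theta)\|_{\mathbf{Y}_\theta}=0$, note that the difference has Fourier coefficients supported on $|k|>n$, and control the $\mathbf{Y}_\theta$-norm through Lemma \ref{lem2} with the $Y_1$-factor bounded by the uniform $L^1$-boundedness of the de la Vall\'ee Poussin kernels. But at exactly the point where the theorem becomes nontrivial you stop proving and start describing: you state that ``one must work with $\widehat{P\phi_n}(k)=\sum_{k_1+\cdots+k_l=k}\tilde P(\hat\phi_n(k_1),\ldots,\hat\phi_n(k_l))$ and control the multilinear Fourier convolution uniformly in $n$'' and that ``the natural route is to couple the unconditional summability \ldots with the compactness of the polar form,'' but you never carry this out. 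That is the entire content of the theorem: it is precisely here that the hypothesis $\phi_n\in\mho$ (rather than merely $\phi_n\in B_{\F\{X_0,X_1\}}$) is used, and your proposal never actually invokes the defining property of $\mho$ in a computation.

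Concretely, the missing steps are: (i) the splitting of the high-frequency part into the $N_n$ coefficients of largest $\mathbf{Y}_\theta$-norm, indexed by a set $A_n(N_n)$, whose contribution is killed by Lemma \ref{lemma3}, part \ref{3-d}; (ii) for the complement, the estimate $\sum_{k\notin A_n(N_n)}|y^*\widehat{P\phi_n}(k)|^2\le b_n\sum_{k\in\Z}|y^*\widehat{P\phi_n}(k)|$ with $b_n=\sup_k\|\widehat{P\phi_n}(k)\|_{\mathbf{Y}_\theta}\to 0$; (iii) the key uniform bound $\sum_{j_1,\ldots,j_l\in\Z}|y^*\tilde P(\hat\phi_n(j_1),\ldots,\hat\phi_n(j_l))|\le l^l/l!$, obtained by iteratively replacing absolute values by suprema over scalars $|\lambda_{j_i}|\le1$ and pulling the sums inside $\tilde P$ --- this is exactly where the unconditional convergence and the uniform bound over $(\lambda_k)$ in the definition of $\mho$ enter, and without it the right-hand side of (ii) is not even known to be finite; and (iv) the identity $\sum_{k\notin A_n(N_n)}\widehat{P\phi_n}(k)e^{ikt}=\tilde P(\phi_n(e^{it}),\ldots,\phi_n(e^{it}),\psi_n(e^{it}))$ with $\psi_n$ the corresponding tail of $\phi_n$, which is what puts the expression into the exact form required by Lemma \ref{lem_polar} so that the scalar decay from (ii)--(iii) can be upgraded, via compactness of $\tilde P$, to decay of the $Y_0$-norms (and then Lemma \ref{lem2} finishes). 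Your reduction to ``prove $\int_0^{2\pi}\|g_n(e^{it})\|_{Y_0}\,dt\to0$'' without steps (i)--(iv) is a restatement of the problem, not a solution; indeed you acknowledge the nonlinear obstacle for $l\ge2$ but leave it unresolved, so the proposal is an outline with a genuine gap at its core rather than a proof.
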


\begin{proof} Let $0<\theta<1$ and assume (without loss of generality and to simplify the calculations) that $\|P\|_{\mathbf{X}_{\theta} \rightarrow \mathbf{Y}_{\theta}} \leq 1$. Let $\{\phi_n \} \subseteq \mho$. It will suffice to show $\lim_{n \rightarrow \infty}\|P\phi_n(e^{\theta})\|_{\mathbf{Y}_{\theta}}=0$. Indeed, using Lemma \ref{4} we can conclude that
$$
\lim_{n \rightarrow \infty}\|P\phi_n(e^{\theta})-S_n\big(P\phi_n\big)(e^{\theta})\|_{\mathbf{Y}_{\theta}}=0,
$$
so we would be in the situation of Lemma \ref{5} and the result would follow.
To simplify the notation, assume without loss of generality that $\widehat{P\phi}_n(k)=0$ for $n \in \N$ and $|k| \leq n$. \\
For any $N \in \N$ let us pick a subset $A_n(N) \subseteq \Z$ so that $\text{card}(A_n(N))=N$ and $\|\widehat{P\phi}_n(k) \|_{\mathbf{Y}_{\theta}} \leq \| \widehat{P\phi}_n(l)\|_{\mathbf{Y}_{\theta}}$ whenever $k \notin A_n(N)$ and $l \in A_n(N)$. We may use Lemma \ref{lemma3}, part \ref{3-d} to see that for any fixed $N$ we must have
$$
\lim_{n \rightarrow \infty}\bigg\| \sum_{k \in A_n(N)} \widehat{P\phi}_n(k)e^{k\theta} \bigg\|_{\mathbf{Y}_{\theta}}=0.
$$
It is therefore possible to pick a non-decreasing sequence of integers $N_n$ with $\lim_{n \rightarrow \infty}N_n=\infty$ so that
$$
\lim_{n \rightarrow \infty}\bigg\| \sum_{k \in A_n(N_n)} \widehat{P\phi}_n(k)e^{k\theta} \bigg\|_{\mathbf{Y}_{\theta}}=0.
$$
We need to deal now with $\sum_{k \notin A_n(N_n)} \widehat{P\phi}_n(k)z^k$. Let $b_n=\sup_{k \in \Z}\| \widehat{P\phi}_n(k)\|_{\mathbf{Y}_{\theta}}$. Then, by Lemma \ref{lemma3}, part \ref{3-c}, we have $\lim_{n \rightarrow \infty}b_n=0$. \\
\noindent
Let $y^* \in B_{Y_0^*} \cap B_{Y_1^*}$. Then,
\begin{equation} \label{convergence}
\begin{aligned}
\int_0^{2\pi}\bigg|y^*\sum_{k \notin A_n(N_n)}\widehat{P\phi}_n(k)e^{ikt}\bigg|^2{dt \over 2\pi} &= \sum_{k \notin A_n(N_n)}|y^* \widehat{P\phi}_n(k)|^2 \\
& \leq b_n \sum_{k \in \Z}|y^* \widehat{P\phi}_n(k)|=b_n\sup_{|\lambda_k|\leq 1}\bigg|\sum_{k \in \Z}\lambda_ky^* \widehat{P\phi}_n(k)\bigg|.
\end{aligned}
\end{equation}
\noindent
Now, notice that, if $T$ is a bilinear form (for the sake of simplification of the notation, we will assume $\|T\| \leq 1$),
\begin{equation} \label{key}
\begin{aligned}
\sum_{j_1 \in \Z} \sum_{j_2 \in \Z}|y^*T(\hat{\phi}_n(j_1),\hat{\phi}_n(j_2))|&=\sum_{j_1 \in \Z} \bigg| \sum_{j_2 \in \Z} \sup_{|\lambda_{j_2}| \leq 1}\lambda_{j_2}y^*T(\hat{\phi}_n(j_1),\hat{\phi}_n(j_2)) \bigg| \\
&=\sum_{j_1 \in \Z}\bigg|\sup_{|\lambda_{j_2}|\leq 1}y^*T\left(\hat{\phi}_n(j_1),\sum_{j_2 \in \Z}\lambda_{j_2}\hat{\phi}_n(j_2)\right)\bigg| \\
&=\sup_{\stackrel{|\lambda_{j_1}| \leq 1}{|\lambda_{j_2}| \leq 1}} \bigg|\sum_{j_1 \in \Z}\lambda_{j_1} y^* T\left(\hat{\phi}_n(j_1), \sum_{j_2 \in \Z}\lambda_{j_2}\hat{\phi}_n(j_2)\right)\bigg| \\
&=\sup_{\stackrel{|\lambda_{j_1}| \leq 1}{|\lambda_{j_2}| \leq 1}} \bigg| y^* T\left(\sum_{j_1 \in \Z}\lambda_{j_1}\hat{\phi}_n(j_1), \sum_{j_2 \in \Z}\lambda_{j_2}\hat{\phi}_n(j_2)\right)\bigg|<\infty,
\end{aligned}
\end{equation}
taking into consideration the fact that $\phi_n \in \mho$. We remark that the steps for proving equation \eqref{key} can be followed in order to obtain the analogous result for an $s$-multilinear form $T$,
$$
\sum_{j_1 \in \Z} \ldots \sum_{j_s \in \Z}|y^*T(\hat{\phi}_n(j_1),\ldots,\hat{\phi}_n(j_s))|<\infty,
$$
making the corresponding changes. \\
Therefore, we can deduce that
\begin{equation}
\begin{aligned}
y^*\widehat{P\phi}_n(k)&=y^*{1 \over 2\pi i}\int_0^{2\pi}\tilde{P}\big(\phi_n(e^{it}), \ldots,\phi_n(e^{it})) \big)e^{-ikt}\,dt \\
&=y^*{1 \over 2\pi i}\int_0^{2\pi}\tilde{P}\bigg(\sum_{j_1 \in \Z}\hat{\phi}_n(j_1)e^{ij_1t}, \ldots,\sum_{j_l \in \Z}\hat{\phi}_n(j_l)e^{ij_lt}e^{-ikt} \bigg) \, dt \\
&\stackrel{\eqref{key}}{=}\sum_{j_1, \ldots j_l \in \Z}y^{*}\tilde{P}\big(\hat{\phi}(j_1), \ldots,\hat{\phi}(j_l)\big){1 \over 2\pi i}\int_0^{2\pi}e^{i(j_1+ \ldots+j_l-k)t}\,dt \\
&= \sum_{j_1+ \ldots +j_l=k}y^* \tilde{P}(\hat{\phi_n}(j_1), \ldots , \hat{\phi_n}(j_l)).
\end{aligned}
\end{equation}
\noindent
On the other hand,
$$
\begin{aligned}
\sup_{|\lambda_k|\leq 1}\bigg|\sum_{k \in \Z}\lambda_ky^* \widehat{P\phi}_n(k)\bigg|&=\sup_{|\lambda_k\leq 1}\bigg|\sum_{k \in \Z}\lambda_k\sum_{j_1+ \ldots +j_l=k}y^* \tilde{P}(\hat{\phi_n}(j_1), \ldots , \hat{\phi_n}(j_l))\bigg| \\ 
& \leq \sup_{|\lambda_{j_1}|, \ldots , |\lambda_{j_l}| \leq 1}\bigg|y^* \sum_{j_1, \ldots , j_l \in \Z} \tilde{P}(\lambda_{j_1} \hat{\phi_n}(j_1), \ldots , \lambda_{j_l} \hat{\phi_n}(j_l)) \bigg| \\
&=\sup \bigg|y^* \tilde{P} \bigg(\sum_{j_1 \in \Z}\lambda_{j_1}\hat{\phi_n}(j_1), \ldots , \sum_{j_l \in \Z}\lambda_{j_l}\hat{\phi_n}(j_l)\bigg)\bigg| \\
&\leq {l^l \over l!}\|P\|_{\mathbf{X}_{\theta} \rightarrow \mathbf{Y}_{\theta}} \leq {l^l \over l!},
\end{aligned}
$$
since $\|P\|_{\mathbf{X}_{\theta} \rightarrow \mathbf{Y}_{\theta}} \leq 1$. Hence, keeping in mind that $b_n=\sup_{k \in \Z}\| \widehat{P\phi}_n(k)\|_{\mathbf{Y}_{\theta}} \xrightarrow[n \rightarrow \infty]\,0$, we can conclude from \eqref{convergence} that
$$
\int_0^{2\pi}\bigg|y^*\sum_{k \notin A_n(N_n)}\widehat{P\phi}_n(k)e^{ikt}\bigg|^2{dt \over 2\pi} \xrightarrow[n \rightarrow \infty]\,0
$$
for every $y^* \in B_{Y_0^*}$. \\
Next, if we call $\psi_n(t)=\sum_{k \notin A_n(N_n)}\hat{\phi}_n(k)$, we can deduce that
$$
\sum_{k \notin \A_n(N_n)} \widehat{P\phi}_n(k)e^{ikt}=\tilde{P}\big(\stackrel{l-1}{\overbrace{\phi_n(t), \ldots , \phi_n(t)}},\psi_n(t)\big).
$$
To justify this last equality, we will give the details for the case where the degree of the polynomial is $2$. The reader shall keep in mind that the general case follows the same steps, with the appropriate adaptation.
$$
\begin{aligned}
\sum_{k \notin A_n(N_n)}y^* \hat{P\phi_n}(k)e^{ikt}&=\sum_{k \notin A_n(N_n)}y^* {1 \over 2\pi}\int_0^{2\pi}P\phi_n(e^{is})e^{-iks}dse^{ikt} \\
&=\sum_{k \notin A_n(N_n)}y^* {1 \over 2\pi}\int_0^{2\pi}\sum_{j_1 \in \Z} \sum_{j_2 \in \Z}\tilde{P}\left(\hat{\phi}_n(j_1),\hat{\phi}_n(j_2)\right)e^{-i(k-(j_1+j_2))s}dse^{ikt} \\
&\stackrel{\eqref{key}}{=}\sum_{k \notin A_n(N_n)}y^* \sum_{j_1 \in \Z} \sum_{j_2 \in \Z}\tilde{P}\left(\hat{\phi}_n(j_1),\hat{\phi}_n(j_2)\right){1 \over 2\pi}\int_0^{2\pi}e^{-i(k-(j_1+j_2))s}dse^{ikt} \\
&=\sum_{k \notin A_n(N_n)} \sum_{j_1 \in \Z}y^*\tilde{P}\left(\hat{\phi}_n(j_1),\hat{\phi}_n(k-j_1) \right)e^{ikt} \\
&\stackrel{\eqref{key}}{=}\sum_{j_1 \in \Z} y^*\tilde{P}\left(\hat{\phi}_n(j_1),\sum_{k \notin A_n(N_n)}e^{ikt}\hat{\phi}_n(k-j_1) \right) \\
&=\sum_{j_1 \in \Z}y^*\tilde{P}\left(\hat{\phi}_n(j_1),\sum_{k \notin A_n(N_n)}\mathfrak{F}(e^{ij_1 \cdot}\phi_n)(k)e^{ikt} \right) \\
&=\sum_{j_1 \in \Z}y^*\tilde{P}\left(\hat{\phi}_n(j_1),e^{ij_1t}\psi_n(e^{it}) \right) \\
&=y^*\tilde{P}\left(\sum_{j_1 \in \Z}\hat{\phi}_n(j_1)e^{ij_1t},\psi_n(e^{it}) \right) \\
&=y^*\tilde{P}\big(\phi_n(e^{it}),\psi_n(e^{it})\big).
\end{aligned}
$$
Apply finally Lemmas \ref{lem_polar} and \ref{lem2} to conclude the proof.

\end{proof}

\begin{corollary} For $\X, \, \Y$ and $P$ as in Theorem \ref{theo} and for $\left< X_0,X_1\right>_{\theta}$, the Banach space that appears when applying Peetre's Interpolation method, we have that $P:\left< X_0,X_1\right>_{\theta} \rightarrow \mathbf{Y}_{\theta}$ is compact for every $0<\theta<1$.

\end{corollary}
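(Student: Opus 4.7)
The plan is to derive the corollary from Theorem \ref{theo} by showing that the open unit ball of $\langle X_0,X_1\rangle_\theta$ is contained in the closure (in $\mathbf{X}_\theta$) of $\mho_\theta$. Once that is done, continuity of the polynomial $P:\mathbf{X}_\theta\to\mathbf{Y}_\theta$ places the image of the unit ball inside $\overline{P(\mho_\theta)}$, which is compact by Theorem \ref{theo}; $l$-homogeneity of $P$ then extends the conclusion to every bounded set of $\langle X_0,X_1\rangle_\theta$.

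Starting from any $x$ with $\|x\|_{\langle X_0,X_1\rangle_\theta}<1$, the definition of Peetre's method provides a representation $x=\sum_{k\in\Z}x_k$ with $x_k\in X_0\cap X_1$, each $\sum_k e^{(j-\theta)k}x_k$ unconditionally convergent in $X_j$, and
$$
\max_{j=0,1}\sup_{|\lambda_k|\leq 1}\bigg\|\sum_{k\in\Z}\lambda_k e^{(j-\theta)k}x_k\bigg\|_{X_j}<1.
$$
For each $N\in\N$ I would consider the Laurent polynomial $\phi_N(z)=\sum_{|k|\leq N}z^ke^{-k\theta}x_k$, which, being a finite sum with values in $X_0\cap X_1$, belongs to $\F\{X_0,X_1\}$. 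Its Fourier coefficients are $\widehat{\phi_N}(k)=e^{-k\theta}x_k$ for $|k|\leq N$ and zero otherwise, so $e^{jk}\widehat{\phi_N}(k)=e^{(j-\theta)k}x_k$ on the relevant range. The displayed inequality, restricted to $|k|\leq N$, therefore yields $\big\|\sum_k\lambda_k e^{jk}\widehat{\phi_N}(k)\big\|_{X_j}<1$ for every multiplier with $|\lambda_k|<1$, giving $\phi_N\in\mho$; evaluating at $e^\theta$ shows $x^{(N)}:=\phi_N(e^\theta)=\sum_{|k|\leq N}x_k\in\mho_\theta$.

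Letting $N\to\infty$, the unconditional convergence of $\sum_k e^{(j-\theta)k}x_k$ in each $X_j$ forces the bounded-multiplier norms of the tails to vanish, so $x^{(N)}\to x$ in the Peetre norm and hence in $\mathbf{X}_\theta$ via the standard continuous inclusion $\langle X_0,X_1\rangle_\theta\hookrightarrow\mathbf{X}_\theta$. Continuity of $P$ then yields $Px^{(N)}\to Px$ in $\mathbf{Y}_\theta$, so $Px\in\overline{P(\mho_\theta)}$; this set is compact by Theorem \ref{theo}, and homogeneity finishes the argument. The step I expect to require the most care is the passage $x^{(N)}\to x$ in $\mathbf{X}_\theta$: if the continuous embedding $\langle X_0,X_1\rangle_\theta\hookrightarrow\mathbf{X}_\theta$ is not taken for granted, one can build a function $\psi_N\in\F\{X_0,X_1\}$ realising the tail $x-x^{(N)}$ at $e^\theta$ (using exactly the same Laurent construction applied to the indices $|k|>N$) and directly estimate $\|x-x^{(N)}\|_{\mathbf{X}_\theta}$ via Lemma \ref{lem2}.
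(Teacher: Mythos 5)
Your argument is correct and follows essentially the same route as the paper: both rest on turning a Peetre representation $x=\sum_{k}x_k$ of norm less than $1$ into elements of $\mho$ whose Fourier coefficients are $e^{-\theta k}x_k$, and then invoking Theorem \ref{theo}. The only difference is that the paper places the full series $\phi(z)=\sum_{k}e^{-\theta k}x_kz^k$ in $\mho$ at once, so that $B_{\left<X_0,X_1\right>_{\theta}}\subseteq\mho_{\theta}$ directly, whereas you use Laurent-polynomial truncations together with continuity of $P:\mathbf{X}_{\theta}\rightarrow\mathbf{Y}_{\theta}$ and the compactness of $\overline{P(\mho_{\theta})}$ --- a harmless variation that trades the verification that the infinite series belongs to $\F\{X_0,X_1\}$ for a limiting argument.
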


\begin{proof} First of all, notice that $\left< X_0,X_1\right>_{\theta}$ is contained in $\X_{\theta}$ (as pointed out in \cite{Jan,Pe}) and hence we may use all the previous results. More specifically, observe that if $x \in \left< X_0,X_1\right>_{\theta}$, then we can write $x=\sum_{k \in \Z}x_k$, with the series converging unconditionally and therefore
$$
\lim_{n \rightarrow \infty} \sup_{|\lambda_k|\leq 1} \bigg\| \sum_{|k|\geq n}\lambda_k e^{(j-\theta)k}x_k\bigg\|_{X_j}=0
$$
for $j=0, \, 1$. Hence, $\phi(z):=\sum_{k=1}^{\infty}e^{-\theta k}x_kz^k$ is holomorphic on $1 < |z|<e$ and continuous on the boundary (because the series converges uniformly), therefore is an element of $\F(\X)$. \\
Therefore, $B_{\left< X_0,X_1\right>_{\theta}} \subseteq \mho_{\theta}$ and result follows.

\end{proof}

Keeping in mind that compactness of a holomorphic function $f$ and compactness of each of the polynomials that appear in the Taylor series representation of $f$ are equivalent (\cite{ArScho}), we also have the following corollary:

\begin{corollary} \label{cor1}
Let $\mathbf{X}=(X_0,X_1)$ and $\mathbf{Y}=(Y_0,Y_1)$ be compatible couples of Banach spaces and let $f:(X_0,X_1) \rightarrow (Y_0,Y_1)$ so that $f:X_0+X_1 \rightarrow Y_0+Y_1$ and $f:X_j \rightarrow Y_j$ ($j=0,1$) are holomorphic. Assume furthermore that $f|_{X_0}:X_0 \rightarrow Y_0$ is compact. \\
Then, $f:\left<X_0,X_1\right>_{\theta} \rightarrow [Y_0,Y_1]_{\theta}$ is compact, for every $0<\theta<1$.
\end{corollary}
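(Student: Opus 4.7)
The plan is to deduce compactness of $f:\langle X_0,X_1\rangle_\theta\to[Y_0,Y_1]_\theta$ from compactness of its Taylor polynomials---which is exactly what the previous corollary provides---by using uniform convergence of the Taylor series to transfer total boundedness from finite partial sums to the image $f(B)$ itself, and then extending from $X_0\cap X_1$ to all of $\langle X_0,X_1\rangle_\theta$ by density.

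First I would note that $f:\langle X_0,X_1\rangle_\theta\to[Y_0,Y_1]_\theta$ is itself holomorphic: Theorem \ref{int_anal} gives holomorphy of $f:[X_0,X_1]_\theta\to[Y_0,Y_1]_\theta$, and this is preserved under precomposition with the continuous inclusion $\langle X_0,X_1\rangle_\theta\hookrightarrow[X_0,X_1]_\theta$ used in the previous corollary. I would then fix a point $x\in X_0\cap X_1$ and verify compactness on a $\langle\cdot\rangle_\theta$-neighborhood of $x$. Because $f:X_j\to Y_j$ and differentiation commutes with restriction, every Taylor polynomial $P_mf(x)=d^mf(x)/m!$ lies in $\mathcal{P}(^m\mathbf{X};\mathbf{Y})$; Proposition 3.4 of \cite{ArScho} applied to $f|_{X_0}$ makes the restriction $P_mf(x):X_0\to Y_0$ compact, so the previous corollary yields compactness of $P_mf(x):\langle X_0,X_1\rangle_\theta\to\mathbf{Y}_\theta$ for every $m$. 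The radius-of-convergence estimate proved inside Theorem \ref{int_anal} guarantees uniform convergence of the Taylor series $\sum_m P_mf(x)(y-x)$ on some ball $B$ in $\mathbf{X}_\theta$ centered at $x$, hence on some (smaller) ball $B'$ in $\langle X_0,X_1\rangle_\theta$. The partial sums are finite sums of compact polynomials and so map $B'$ into totally bounded subsets of $\mathbf{Y}_\theta$; uniform convergence transfers total boundedness to $f(B')$, and completeness of $\mathbf{Y}_\theta$ upgrades this to relative compactness.

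To reach an arbitrary $x\in\langle X_0,X_1\rangle_\theta$ I would use that $X_0\cap X_1$ is dense in $\langle X_0,X_1\rangle_\theta$: writing $x=\sum_{k\in\Z}x_k$ with $x_k\in X_0\cap X_1$ and exploiting the uniform tail decay $\sup_{|\lambda_k|\le 1}\|\sum_{|k|>N}\lambda_k e^{(j-\theta)k}x_k\|_{X_j}\to 0$ (already observed inside the proof of the previous corollary), the finite truncations of the defining series lie in $X_0\cap X_1$ and converge to $x$ in $\|\cdot\|_{\langle\cdot\rangle_\theta}$. Choosing $x_0\in X_0\cap X_1$ so close to $x$ that a $\langle\cdot\rangle_\theta$-ball around $x$ fits inside the compact-image ball around $x_0$ produced in the previous paragraph, I conclude compactness of $f$ at $x$, which completes the proof.

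The main obstacle I anticipate is the middle paragraph---specifically, the passage from polynomial-by-polynomial compactness to compactness of the image of an entire ball. This rests on having uniform convergence of the Taylor series in the $\langle\cdot\rangle_\theta$-norm, for which the precise radius estimate of Theorem \ref{int_anal} (and not merely the separate holomorphies on $X_0$ and $X_1$) is essential, combined with the standard fact that completeness upgrades totally bounded to relatively compact.
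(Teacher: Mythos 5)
Your reduction is essentially the paper's: holomorphy of $f$ on the interpolated spaces comes from Theorem \ref{int_anal} together with the continuous inclusion $\langle X_0,X_1\rangle_{\theta}\hookrightarrow[X_0,X_1]_{\theta}$, and compactness of each Taylor polynomial ${d^mf(x) \over m!}$ at points $x\in X_0\cap X_1$ comes from the corollary to Theorem \ref{theo}. The only real difference is the last step: the paper simply invokes Proposition 3.4 of \cite{ArScho} (a holomorphic map is compact if and only if all the polynomials of its Taylor expansions are compact), whereas you re-prove the needed implication by hand through uniform convergence of the partial sums and the transfer of total boundedness; that is a legitimate unpacking of the citation, and your middle paragraph is sound.

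The one soft spot is your final density step, which as written has a gap: you choose $x_0\in X_0\cap X_1$ ``so close to $x$ that a ball around $x$ fits inside the compact-image ball around $x_0$,'' but the radius of that compact-image ball was obtained from the radius-of-convergence estimate at $x_0$, which is controlled by $R_0(x_0)^{1-\theta}R_1(x_0)^{\theta}/e$ and is not a priori bounded below as $x_0\to x$: closeness of $x_0$ to $x$ in the $\langle\cdot,\cdot\rangle_{\theta}$-norm gives no control on the $X_0$- and $X_1$-radii of boundedness at $x_0$, so conceivably every admissible $x_0$ has a compact-image ball too small to capture a neighbourhood of $x$. The repair is exactly the device used at the end of the proof of Theorem \ref{int_anal}: since $f$ is holomorphic, hence locally bounded, on the interpolated space, it is bounded on some ball $B(x,\delta)$; picking $x_0$ with $\|x-x_0\|<\delta/3$ makes $f$ bounded on $B(x_0,\delta/2)$, and the equality of the radius of convergence with the radius of boundedness (\cite{Mu}, Theorem 12) then gives uniform convergence of the Taylor series at $x_0$ on a ball of radius bounded below independently of the particular $x_0$, which contains a neighbourhood of $x$; your totally-bounded-image argument then applies verbatim. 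With that replacement the proof is complete and coincides, modulo the explicit re-derivation of the Aron--Schottenloher implication, with the paper's.
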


\begin{proof}
We remark first that, by means of Lemma \ref{int_anal}, we obtain that $f:[X_0,X_1]_{\theta} \rightarrow [Y_0,Y_1]_{\theta}$ is holomorphic, so, applying once more the fact that $\left<X_0,X_1\right>_{\theta}$ is contained in $[X_0,X_1]_{\theta}$, we deduce that $f:\left<X_0,X_1\right> \rightarrow [Y_0,Y_1]_{\theta}$ is holomorphic as well. \\
Furthermore, a look at the details in the proof of Lemma \ref{int_anal} shows that the sequence of polynomials that gives holomorphy at one point $x$ is $\left\{{d^mf(x) \over m!}\right\}_{m=0}^{\infty}$. Applying corollary \ref{cor1}, we obtain that
$$
{d^mf(x) \over m!}:\left<X_0,X_1\right>_{\theta} \rightarrow [Y_0,Y_1]_{\theta}
$$
is compact for every $m \geq 0$, and therefore $f:\left<X_0,X_1\right>_{\theta} \rightarrow [Y_0,Y_1]_{\theta}$ is compact as well.
\end{proof}

\section{Generalization of classic interpolation results.} \label{classic}

\begin{theorem} \label{main}
Let $\mathbf{X}=(X_0,X_1)$ and $\mathbf{Y}=(Y_0,Y_1)$ be compatible couples of Banach spaces and assume that we can find $\{y_n\}_{n=1}^{\infty} \subseteq Y_0 \cap Y_1$ so that $\{y_n\}_{n=1}^{\infty}$ is a Schauder basis of $(Y_0 \cap Y_1, \| \cdot \|_{Y_j})$ for both $j=0$ and $j=1$. Assume furthermore that $P\in \mathcal{P}(^k\mathbf{X};\mathbf{Y})$ is a continuous homogeneous polynomial for some $k \in \N$ and that $P:X_0 \rightarrow Y_0$ is compact. \\
Then, for every $0\leq \theta \leq 1$, $P:[X_0,X_1]_{\theta} \rightarrow [Y_0,Y_1]_{\theta}$ is compact.
\end{theorem}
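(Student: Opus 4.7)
The plan is to approximate $P$ by finite-rank polynomials built from the common Schauder basis, and then to control the error via a multilinear Riesz--Thorin estimate applied to the polar of $P$; I focus on the case $0<\theta<1$. First, using Theorem \ref{Ber}~b), I would reduce to the situation where $Y_0\cap Y_1$ is dense in each $Y_j$: setting $\widetilde{X}_j := \overline{X_0\cap X_1}^{\|\cdot\|_{X_j}}$ and $\widetilde{Y}_j := \overline{Y_0 \cap Y_1}^{\|\cdot\|_{Y_j}}$, one has $[X_0,X_1]_\theta=[\widetilde{X}_0,\widetilde{X}_1]_\theta$ and analogously for $Y$. Since $P$ sends $X_0\cap X_1$ into $Y_0\cap Y_1$, continuity shows that $P$ restricts to a continuous polynomial $\widetilde{X}_j\to\widetilde{Y}_j$ which remains compact as a map $\widetilde{X}_0\to\widetilde{Y}_0$. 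After relabeling, $\{y_n\}$ is a genuine Schauder basis of each $Y_j$, and the associated partial sum projections $Q_N$ define uniformly bounded operators $Q_N : Y_j \to Y_j$ with $\sup_N \|Q_N\|_{Y_j\to Y_j}\le C_j<\infty$.

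The core of the argument is to prove that $\|(I-Q_N)P\|_{\mathbf{X}_\theta \to \mathbf{Y}_\theta}\to 0$. Using the polarization identity from Definition \ref{polar}, the compactness of $P:X_0\to Y_0$ transfers to the compactness of the polar $\widetilde{P}:X_0\times\cdots\times X_0\to Y_0$ as a multilinear map, because $\widetilde{P}(B_{X_0}^k)$ is contained in a finite sum of scalar multiples of the relatively compact set $P(B_{X_0})$. Since $Q_N\to I$ strongly on $Y_0$ and the $Q_N$ are uniformly bounded, this convergence is in fact uniform on the compact set $\overline{\widetilde{P}(B_{X_0}^k)}$; setting $\widetilde{T}_N := (I-Q_N)\widetilde{P}$, one therefore obtains $\|\widetilde{T}_N\|_{X_0^k\to Y_0}\to 0$. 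On the other extremal space, $\|\widetilde{T}_N\|_{X_1^k\to Y_1}\le(1+C_1)\|\widetilde{P}\|_{X_1^k\to Y_1}$ is uniformly bounded in $N$. Since $\widetilde{T}_N$ carries $(X_0\cap X_1)^k$ into $Y_0\cap Y_1$, Theorem \ref{multilinear} yields
$$
\|\widetilde{T}_N\|_{\mathbf{X}_\theta^k\to\mathbf{Y}_\theta}\le \|\widetilde{T}_N\|_{X_0^k\to Y_0}^{1-\theta}\,\|\widetilde{T}_N\|_{X_1^k\to Y_1}^{\theta}\xrightarrow[N\to\infty]{}0,
$$
and the elementary inequality $\|T\|\le\|\widetilde{T}\|$ from Definition \ref{polar} transfers this to $\|(I-Q_N)P\|_{\mathbf{X}_\theta\to\mathbf{Y}_\theta}\to 0$.

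To finish, I would decompose $P=Q_N P+(I-Q_N)P$ on $\mathbf{X}_\theta$. The first summand has range in the finite-dimensional space $\operatorname{span}\{y_1,\ldots,y_N\}\subseteq\mathbf{Y}_\theta$, so it carries bounded subsets of $\mathbf{X}_\theta$ into relatively compact sets; the second summand satisfies $\sup_{\|x\|_{\mathbf{X}_\theta}\le 1}\|(I-Q_N)Px\|_{\mathbf{Y}_\theta}\to 0$ by the previous paragraph. A standard diagonal extraction then yields, from any bounded sequence $\{x_n\}\subseteq\mathbf{X}_\theta$, a subsequence along which $\{Px_n\}$ is Cauchy in $\mathbf{Y}_\theta$, which is precisely compactness of $P$. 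The main obstacle I anticipate is the transfer of compactness from $P$ to its polar $\widetilde{P}$, together with the resulting uniform convergence $(I-Q_N)\widetilde{P}\to 0$ over the polydisc $B_{X_0}^k$; once those are in place, the multilinear interpolation of Theorem \ref{multilinear} closes the argument.
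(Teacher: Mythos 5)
Your proposal is correct and follows essentially the same route as the paper: reduce via Theorem \ref{Ber} to the case where $Y_0\cap Y_1$ is dense in each $Y_j$, approximate $P$ by the finite-rank polynomials built from the common-basis partial-sum projections, and show the error tends to $0$ in the $\mathbf{X}_\theta\to\mathbf{Y}_\theta$ norm by interpolating the polar with Theorem \ref{multilinear}, using compactness of $P:X_0\to Y_0$ for the $(1-\theta)$-factor and uniform boundedness of the projections for the $\theta$-factor. The only cosmetic difference is that you transfer compactness to the polar and prove $\|(I-Q_N)\tilde P\|_{X_0^k\to Y_0}\to 0$ directly, whereas the paper proves $\|P-\pi_nP\|_{X_0\to Y_0}\to 0$ for the polynomial and passes to the polar through Martin's inequality $\|\tilde T\|\le \frac{k^k}{k!}\|T\|$; both variants rest on the same ingredients and yield the same estimate.
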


\begin{proof}
First of all, notice that we may assume that $Y_0 \cap Y_1$ is dense in $Y_0$ and $Y_1$. Indeed, otherwise we may center our attention on $P:([X_0,X_1]_{0},[X_0,X_1]_{1}) \rightarrow ([Y_0,Y_1]_{0},[Y_0,Y_1]_{1})$, which (as before) is well defined since $P$ is a homogeneous continuous polynomial and, therefore, $P \circ \phi \in \F\{Y_0,Y_1\}$ for every $\phi \in \F\{X_0,X_1\}$. \\
Using Theorem \ref{Ber}, we would have that indeed $Y_0 \cap Y_1$ is dense in both $[Y_0,Y_1]_{0}$ and $[Y_0,Y_1]_{1}$ and that $[Y_0,Y_1]_{\theta}=\Big[[Y_0,Y_1]_{0},[Y_0,Y_1]_{1}\Big]_{\theta}$ (so we would be dealing with the same interpolated spaces). Notice that $[Y_0,Y_1]_{0}$ is a closed subspace of $Y_0$ and that the norm in $[Y_0,Y_1]_{0}$ is the same as $\| \cdot \|_{Y_0}$ and that we would still have that $P:[X_0,X_1]_{0} \rightarrow [Y_0,Y_1]_{0}$ is compact. Define, for every $n \in \N$, $\pi_n:Y_0 \cap Y_1 \rightarrow Y_0 \cap Y_1$ as $\pi_n(\sum_{i=1}^{\infty}a_iy_i)=\sum_{i=1}^na_iy_i$. Then, we know the following:
\begin{itemize}
\item $\pi_n$ is a finite rank operator and therefore it is compact. Due to the fact that $\{y_i\}_{i=1}^{\infty}$ is a Schauder basis, we deduce that $\pi_n$ is continuous. 
\item For every $y \in Y_0 \cap Y_1$ and $j=0,1$, we have $\|y-\pi_ny\|_{Y_j} \xrightarrow[n \rightarrow \infty]\,0$. In particular, $\{y_n\}_{n=1}^{\infty}$ is also a Schauder basis for $(Y_0\cap Y_1, \|\cdot \|_{Y_0 \cap Y_1})$.
\item There exists, for $j=0,1$, $K_j \geq 1$ so that $\|\pi_n\|_{j} \leq K_j$ for every $n \in \N$.
\end{itemize}
We remark that $\pi_n$ admits a continuous extension to $Y_0$ and $Y_1$, maintaining the norm, and therefore it is possible to extend such operators to the whole $Y_0+Y_1$ via $\pi_n(x_0+x_1)=\pi_n(x_0)+\pi_n(x_1)$. In particular, via the Riesz-Thorin theorem, $\pi_n$ is a bounded operator when defined over $[Y_0,Y_1]_{\theta}$. Let us stress that they are still compact operators since they are of finite range. \\
Let us show that $\pi_nP\xrightarrow[n \rightarrow \infty]{\|\cdot\|_{\theta}} P$, so then $P$ would be the limit of compact operators, and hence compact. Indeed, notice first
$$
\begin{aligned}
\|P-\pi_nP\|_{\theta}&\leq \|\tilde{P}-\widetilde{\pi_nP}\|_{\theta} \leq \|\tilde{P}-\widetilde{\pi_nP}\|^{1-\theta}_{0}\|\tilde{P}-\widetilde{\pi_nP}\|^{\theta}_{1} \leq {k^k \over k!} \|P-\pi_nP\|^{1-\theta}_{0}\|P-\pi_nP\|^{\theta}_{1} \\
&\leq {k^k \over k!}\|P-\pi_nP\|^{1-\theta}_{0}\Big(\|P\|_{1}\|I-\pi_n\|_{1}\Big)^{\theta} \leq \Big[(1+K_1)\|P\|_{1}\Big]^{\theta}{k^k \over k!}\|P-\pi_nP\|^{1-\theta}_{0}.
\end{aligned}
$$
Let now $\epsilon>0$. By compactness, we know that we can find $x_1, \ldots,x_m \in \overline{B}_{X_0}$ so that 
$$
P(\overline{B}_{X_0}(0;1)) \subseteq \bigcup_{i=1}^m\overline{B}_{Y_0}(P(x_i);\epsilon).
$$
We can also find $n_{\epsilon} \in \N$ so that, for every $n \geq n_{\epsilon}$ and $j=1, \ldots, m$,
$$
\|Px_j-\pi_nPx_j\|_{Y_0} \leq \epsilon.
$$
Therefore, if $n \geq n_{\epsilon}$ and $x \in \overline{B}_{X_0}$, we can choose $1 \leq j_0 \leq m$ so that 
$$
Px \in \overline{B}_{Y_0}(P(x_{j_0});\epsilon)
$$ 
and set
$$
\begin{aligned}
\|Px-\pi_nPx\|_{Y_0} & \leq \|Px-Px_{j_0}\|_{Y_0}+\|Px_{j_0}-\pi_nPx_{j_0}\|_{Y_0}+\|\pi_nPx_{j_0}-\pi_nPx\|_{Y_0} \\
& \leq (1+K_0)\|Px-Px_{j_0}\|_{Y_0}+\epsilon<(2+K_0)\epsilon.
\end{aligned}
$$
Hence,
$$
\sup_{x\in \overline{B}_{X_0}}\|Px-\pi_nPx\|_{Y_0}=\|P-\pi_nP\|_{0} \leq (2+K)\epsilon.
$$
and we can then conclude, for every $n \geq n_{\epsilon}$, that
$$
\|P-\pi_nP\|_{\theta}\leq \Big[(1+K_1)\|P\|_{1}\Big]^{\theta}{k^k \over k!}[(2+K_0)\epsilon]^{1-\theta} \xrightarrow[\epsilon \rightarrow 0]\,0.
$$

\end{proof}

\begin{corollary}
Let $(X_0,X_1)$ be a compatible couple of Banach spaces and $Y_0, \, Y_1$ be either $L^{q_0}(K)$ or $L^{q_1}(K)$ (with $K \subseteq \R^n$ a compact set and $1 \leq q_0, \, q_1 \leq \infty$) or $\ell^{q_0}, \, \ell^{q_1}$ ($1 \leq q_0, \, q_1<\infty$). Assume $k \in \N$ and $P\in\mathcal{P}(^k(X_0,X_1),(Y_0,Y_1))$ so that $P:X_0 \rightarrow Y_0$ is compact. \\
Then, $P:[X_0,X_1]_{\theta} \rightarrow [Y_0,Y_1]_{\theta}$ is compact for every $0 \leq \theta \leq 1$.
\end{corollary}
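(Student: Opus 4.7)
The plan is to reduce this corollary to a direct application of Theorem \ref{main}: it suffices to produce, for each of the listed couples $(Y_0,Y_1)$, a sequence $\{y_n\}_{n=1}^\infty \subseteq Y_0 \cap Y_1$ that is simultaneously a Schauder basis of $(Y_0 \cap Y_1,\|\cdot\|_{Y_0})$ and of $(Y_0 \cap Y_1,\|\cdot\|_{Y_1})$. Once such a basis is exhibited, Theorem \ref{main} delivers the compactness of $P : [X_0,X_1]_\theta \to [Y_0,Y_1]_\theta$ for every $0 \leq \theta \leq 1$ with no further work.

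In the sequence case $Y_j = \ell^{q_j}$, $1 \leq q_j < \infty$, the natural choice is the canonical coordinate basis $\{e_n\}_{n=1}^\infty$: it is contained in $Y_0 \cap Y_1 = \ell^{\min(q_0,q_1)}$, it is a monotone Schauder basis of every $\ell^p$ with $1 \leq p < \infty$, and the partial-sum projections $\pi_n(x_1,x_2,\ldots) = (x_1,\ldots,x_n,0,\ldots)$ have norm exactly $1$ in every $\ell^p$; so the hypothesis of Theorem \ref{main} is trivially met with $K_0 = K_1 = 1$.

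In the function case $Y_j = L^{q_j}(K)$ with $K \subseteq \mathbb{R}^n$ compact, I would first treat the subcase $1 \leq q_0,q_1 < \infty$ by taking $\{y_n\}$ to be a suitable enumeration of a (tensor-product) Haar system on $K$: this is the classical Schauder basis of $L^p(K)$ for every $1 \leq p < \infty$ whose partial-sum projections are uniformly bounded in each $L^p$-norm, and Theorem \ref{main} applies directly. The genuine obstacle is the endpoint case $q_j = \infty$, in which $(L^\infty(K),\|\cdot\|_\infty)$ is non-separable and therefore admits no Schauder basis. To bypass it I would use Theorem \ref{Ber}(b) to replace each $Y_j$ by the separable closed subspace $\widetilde{Y}_j := \overline{Y_0 \cap Y_1}^{\|\cdot\|_{Y_j}}$ without changing $[Y_0,Y_1]_\theta$; since at least one of $q_0,q_1$ is finite, $Y_0 \cap Y_1$ embeds continuously into the separable space $L^{\min(q_0,q_1)}(K)$, so $\widetilde{Y}_0,\widetilde{Y}_1$ are separable, the Haar system remains a Schauder basis of each in its own norm, and Theorem \ref{main} can then be quoted on the reduced couple to conclude.
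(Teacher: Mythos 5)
For the cases actually covered by a common Schauder basis --- $\ell^{q_0},\ell^{q_1}$ with the coordinate basis, and $L^{q_0}(K),L^{q_1}(K)$ with both exponents finite and the Haar system --- your argument is exactly the paper's: exhibit the common basis and quote Theorem \ref{main}. (The paper's proof is literally this one sentence.) Note also that the reduction to $\widetilde{Y}_j=\overline{Y_0\cap Y_1}^{\|\cdot\|_{Y_j}}$ via Theorem \ref{Ber} that you invoke is already performed inside the proof of Theorem \ref{main}, so that part of your argument adds nothing new; the hypothesis you must verify is still the existence of a basis of $Y_0\cap Y_1$ in both norms.

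The genuine problem is your treatment of the endpoint $q_j=\infty$, and there your fix does not work as stated. If, say, $Y_1=L^{\infty}(K)$ and $q_0<\infty$, then (since $K$ has finite measure) $Y_0\cap Y_1=L^{\infty}(K)$ as a set, so $\widetilde{Y}_1=\overline{Y_0\cap Y_1}^{\|\cdot\|_{\infty}}=L^{\infty}(K)$ itself: passing to the closure changes nothing. Your separability claim is a non sequitur --- a continuous embedding of $Y_0\cap Y_1$ into the separable space $L^{\min(q_0,q_1)}(K)$ says nothing about separability of $(Y_0\cap Y_1,\|\cdot\|_{\infty})$; indeed $L^{\infty}(K)$ embeds continuously in $L^{1}(K)$ and is non-separable. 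Consequently the Haar system is not a Schauder basis of $(Y_0\cap Y_1,\|\cdot\|_{Y_1})$ in this case (its closed linear span in the sup norm is a proper separable subspace), and Theorem \ref{main} cannot be quoted on the reduced couple by this route; one would need a further reduction (e.g.\ to the couple $([Y_0,Y_1]_0,[Y_0,Y_1]_1)$ together with an identification of $[L^{q_0},L^\infty]_1$ with a space on which the Haar partial sums do converge) or a separate argument for $q_j=\infty$. To be fair, the paper's own one-line proof ignores this endpoint difficulty entirely, so your proposal is, if anything, more alert to it --- but the patch you propose does not close the gap.
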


\begin{proof}
Just notice that $[Y_0,Y_1]_{\theta} \in \{L^q(K), \, \ell^q \}$ with ${1 \over q}={1-\theta \over q_0}+{\theta \over q_1}$ and those spaces admit a Schauder basis which is common for all of the interpolated spaces (the dual to the coordinate operators, $\{e_n\}_{n=1}^{\infty}$, for the $\ell^p$ spaces and the Haar system for the $L^p(K)$ spaces).
\end{proof}

In 1957, Pe\l czynski showed that if $P:\ell_p \rightarrow \ell_q$ is a bounded homogeneous polynomial of degree $n$, then $P$ is compact, provided $nq<p$ (\cite{Pel}). Keeping that in mind, we have the following result:
\begin{corollary}
Let $1 \leq p,q <\infty$ and $P:\ell_p \rightarrow \ell_q$ be a homogeneous bounded polynomial of degree $n$ which is not compact. Then, if there exists $1 \leq r \leq p$ and $\epsilon>0$ so that $P(\ell_r) \subseteq \ell_{{r \over n}-\epsilon}$, then $P:(\ell_r,\| \cdot \|_r) \rightarrow (\ell_{{r \over n}-\epsilon}, \| \cdot \|_{{r \over n}-\epsilon})$ is not bounded.
\end{corollary}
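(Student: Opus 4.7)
The plan is to argue by contradiction: I would assume that $P:\ell_r\to\ell_{r/n-\epsilon}$ is bounded and derive that $P:\ell_p\to\ell_q$ must be compact, contradicting the hypothesis. The first step is to upgrade this boundedness to compactness by invoking the Pe\l czynski result quoted just before the statement (\cite{Pel}): an $n$-homogeneous bounded polynomial between $\ell_s$ spaces is automatically compact whenever $n$ times the target exponent is strictly less than the domain exponent. In our situation
$$
n\Bigl(\frac{r}{n}-\epsilon\Bigr)=r-n\epsilon<r,
$$
so $P:\ell_r\to\ell_{r/n-\epsilon}$ is actually compact.

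Next, I would consider the compatible couples $(X_0,X_1)=(\ell_r,\ell_p)$ (compatibility is immediate because $r\le p$ gives $\ell_r\subseteq\ell_p$, both sitting inside $\mathbb{C}^{\mathbb{N}}$) and $(Y_0,Y_1)=(\ell_{r/n-\epsilon},\ell_q)$. The standard unit-vector basis $\{e_n\}$ lies in $Y_0\cap Y_1$ and is a Schauder basis of $Y_0\cap Y_1$ under each of the norms $\|\cdot\|_{Y_j}$, so the hypotheses of Theorem \ref{main} are met: the polynomial $P$ belongs to $\mathcal{P}(^n(X_0,X_1),(Y_0,Y_1))$ and $P:X_0\to Y_0$ is compact. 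Theorem \ref{main} then delivers compactness of
$$
P:[\ell_r,\ell_p]_\theta\to[\ell_{r/n-\epsilon},\ell_q]_\theta
$$
for every $0\le\theta\le 1$.

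The contradiction is extracted from $\theta=1$. By Theorem \ref{Ber}(c), $[\ell_r,\ell_p]_1$ coincides with the closure of $\ell_r\cap\ell_p=\ell_r$ inside $(\ell_p,\|\cdot\|_p)$; since $p<\infty$ and finite sequences already lie in $\ell_r$ and are dense in $\ell_p$, this closure is all of $\ell_p$. The same density argument identifies $[\ell_{r/n-\epsilon},\ell_q]_1$ with $\ell_q$. Consequently $P:\ell_p\to\ell_q$ would be compact, contrary to the assumption. The only mildly delicate point I anticipate is this endpoint identification at $\theta=1$; the rest is a clean pipeline through Pe\l czynski's theorem and Theorem \ref{main}.
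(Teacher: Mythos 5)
Your proposal is correct and follows essentially the same route as the paper: assume boundedness, upgrade to compactness of $P:\ell_r\rightarrow\ell_{r/n-\epsilon}$ via Pe\l czynski, interpolate the couples $(\ell_r,\ell_p)$ and $(\ell_{r/n-\epsilon},\ell_q)$ using the Schauder-basis compactness theorem, and read off the contradiction at an endpoint. The only cosmetic differences are that you take $\theta=1$ (the paper, with its ordering of the couple, takes $\theta=0$) and that you spell out the endpoint identification $[\ell_r,\ell_p]_1=\ell_p$ via Theorem \ref{Ber} and density of finite sequences, which the paper simply asserts.
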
 

\begin{proof}
Indeed, otherwise we would have that $P:(\ell_r,\| \cdot \|_r) \rightarrow (\ell_{{r \over n}-\epsilon}, \| \cdot \|_{{r \over n}-\epsilon})$ is compact, applying the result by Pe\l czynski. If we consider now
$$
P:(\ell_p,\ell_r) \rightarrow (\ell_q,\ell_{{r \over n}-\epsilon}) 
$$
we would have then that, for every $0 \leq \theta \leq 1$, $P:[\ell_r, \ell_p]_{\theta} \rightarrow [\ell_q,\ell_{{r \over n}-\epsilon}]_{\theta}$ is compact. In particular, taking $\theta=0$, we have that $[\ell_p,\ell_r]_{0}=\ell_p$ and $[\ell_q,\ell_{{r \over n}-\epsilon}]_{0}=\ell_q$, so $P:\ell_p \rightarrow \ell_q$ would be compact, reaching hence a contradiction.
\end{proof}

The following theorem generalizes a result presented by Lions and Peetre in \cite{LiPe} (Theorems 2.1 and 2.2 from ch. V, pp. 36--37).

\begin{theorem} \label{later}
Let $(X_0,X_1)$ and $(Y_0,Y_1)$ be two compatible couples of Banach spaces, $X, \, Y$ be Banach spaces and let $0<\theta<1$. 
\begin{enumerate}
\item \label{p1} If $P \in \P(^mX,(Y_0,Y_1))$ and $P:X \rightarrow Y_0$ is compact, then $P:X \rightarrow \mathbf{Y}_{\theta}$ is compact.
\item \label{p2} If $P \in \P(^m(X_0,X_1),Y)$ and $P:X_0 \rightarrow Y$ is compact, then $P:\mathbf{X}_{\theta} \rightarrow Y$ is compact.
\end{enumerate}
\end{theorem}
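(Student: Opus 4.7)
I would prove the two parts separately: part \ref{p1} follows immediately from the interpolation inequality of Lemma \ref{lem2}, while part \ref{p2} requires the Laurent-series machinery developed in Section \ref{firstresults}.

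For part \ref{p1}, take a bounded sequence $\{x_n\}\subset X$. Compactness of $P:X\to Y_0$ provides a subsequence $\{x_{n_k}\}$ such that $\{Px_{n_k}\}$ is Cauchy in $Y_0$, and continuity of $P:X\to Y_1$ keeps $\{Px_n\}$ bounded in $Y_1$. Since $Px_{n_k}-Px_{n_l}\in Y_0\cap Y_1$, Lemma \ref{lem2} gives
$$
\|Px_{n_k}-Px_{n_l}\|_{\mathbf{Y}_\theta}\le C\|Px_{n_k}-Px_{n_l}\|_{Y_0}^{1-\theta}\|Px_{n_k}-Px_{n_l}\|_{Y_1}^{\theta},
$$
which tends to zero as $k,l\to\infty$; hence $\{Px_{n_k}\}$ is Cauchy, and so converges, in the Banach space $\mathbf{Y}_\theta$.

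For part \ref{p2}, I would view the target $Y$ as the trivial compatible couple $(Y,Y)$, so that $P\in\mathcal{P}(^m\mathbf{X},(Y,Y))$ satisfies the hypotheses of Lemma \ref{lemma3} with $Y_0=Y_1=Y$. Take a bounded $\{x_n\}\subset\mathbf{X}_\theta$; after rescaling, assume $\|x_n\|_{\mathbf{X}_\theta}\le 1/2$ and pick $\phi_n\in B_{\mathcal{F}\{X_0,X_1\}}$ with $\phi_n(e^\theta)=x_n$. Part \ref{3-a} of Lemma \ref{lemma3} shows that $\{\widehat{P\phi_n}(k):n\in\mathbb{N},\,k\in\mathbb{Z}\}$ is relatively compact in $Y$, so a diagonal extraction yields a subsequence $\{\phi_{n_j}\}$ with $\widehat{P\phi_{n_j}}(k)$ convergent in $Y$ for every $k\in\mathbb{Z}$. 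The next step is to write
$$
Px_{n_j}=P\phi_{n_j}(e^\theta)=\sum_{k\in\mathbb{Z}}\widehat{P\phi_{n_j}}(k)\,e^{k\theta}
$$
as the Laurent expansion of the holomorphic function $P\phi_{n_j}$ at the interior point $e^\theta\in(1,e)$. Bounding Laurent coefficients from each boundary circle of $\mathbb{D}$ yields constants $C_0,C_1$ with $\|\widehat{P\phi_n}(k)\|_Y\le C_1e^{-k}$ for $k\ge 0$ (from $|z|=e$) and $\|\widehat{P\phi_n}(k)\|_Y\le C_0$ for $k\le 0$ (from $|z|=1$), so $\|\widehat{P\phi_n}(k)\,e^{k\theta}\|_Y$ decays geometrically at rates $e^{-(1-\theta)}$ and $e^{-\theta}$ respectively, uniformly in $n$. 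Making the tails $|k|>N$ uniformly small and then using the componentwise convergence of the finitely many terms with $|k|\le N$ shows that $\{Px_{n_j}\}$ is Cauchy in $Y$.

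The main obstacle in part \ref{p2} is the failure of the direct Lions--Peetre-style argument: decomposing $x_n=x_n^0+x_n^1$ via Theorem \ref{Lions} and expanding $P(x_n^0+x_n^1)=\sum_{j=0}^{m}\binom{m}{j}\tilde{P}(x_n^0,\ldots,x_n^0,x_n^1,\ldots,x_n^1)$ produces mixed terms whose available norm bounds are of order $t^{m\theta-j}$ and blow up whenever $j<m\theta$, precisely because $P$ is nonlinear. The Fourier-coefficient route sidesteps this by moving the problem to the Laurent coefficients of $P\phi_n$, where the compactness of $P:X_0\to Y$ has already been packaged cleanly by Lemma \ref{lemma3}.
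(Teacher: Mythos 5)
Your argument for part \ref{p1} is the paper's argument verbatim: pass to a subsequence with $\{Px_{n_k}\}$ Cauchy in $Y_0$, keep boundedness in $Y_1$, and interpolate the differences through Lemma \ref{lem2}. For part \ref{p2} you take a genuinely different, and in my reading correct, route. The paper argues directly with the Lions--Peetre decomposition: for $x\in B_{\mathbf{X}_\theta}$ it writes $x=x_0+x_1$ via Theorem \ref{Lions}, part \ref{part2}, and then --- precisely to get around the blow-up you identify in the naive binomial expansion --- it invokes Lemma \ref{technical}, which rewrites $P(x_0+x_1)-P(x_0)$ as a sum of terms each containing at least one slot filled by $x_1$ (of size $C't^{\theta-1}$) while the remaining slots carry $x_0+x_1=x$, controlled by $\|x\|_{\mathbf{X}_\theta}\le 1$ rather than by $\|x_0\|_{X_0}\le C't^{\theta}$; a finite $\epsilon/2$-net for the image under $P$ of the relevant ball of $X_0$ then closes the covering argument. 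So the obstacle you dismiss as fatal is exactly what Lemma \ref{technical} is designed to repair, and that is its role in Section \ref{classic}. Your alternative --- treating $Y$ as the couple $(Y,Y)$, using Lemma \ref{lemma3}, part \ref{3-a}, plus a diagonal extraction to make every coefficient $\widehat{P\phi_{n_j}}(k)$ converge, and then summing the expansion $P\phi_n(e^{\theta})=\sum_{k\in\Z}\widehat{P\phi_n}(k)e^{k\theta}$ with the uniform bounds $\|\widehat{P\phi_n}(k)\|_Y\le\min\{\|P\|_{X_0\rightarrow Y},\,e^{-k}\|P\|_{X_1\rightarrow Y}\}$ from the two boundary circles, so the tails are small uniformly in $n$ --- is sound, granted the standard Calder\'on-space conventions (continuity of $\phi$ on $\overline{\D}$, so that the boundary Fourier coefficients coincide with the Laurent coefficients of the $Y$-valued holomorphic function $P\phi$ and can be estimated from either circle); the paper itself uses this identification in Lemma \ref{lemma3}, part \ref{3-d} (the ``change $u=i+t$''). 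What each approach buys: yours stays entirely within the Fourier machinery of Section \ref{firstresults}, needs neither Theorem \ref{Lions} nor Lemma \ref{technical}, and gives uniform control of the tail over all of $B_{\F\{X_0,X_1\}}$; the paper's is more elementary (no vector-valued Laurent expansion), produces an explicit finite net, and is what motivates the combinatorial identity of Lemma \ref{technical}, which the author highlights as being of independent interest.
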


Before giving the details of the proof, we will need to state the following technical lemma:
\begin{lemma} \label{technical}
Let $X, \, Y$ be two Banach spaces and let $T:\stackrel{m}{\overbrace{X \times \ldots \times X}} \rightarrow Y$ be a symmetric multilinear operator. Then, for every $x_0, x_1 \in X$, we have
$$
T(x_0+x_1, \ldots,x_0+x_1)=T(x_0, \ldots,x_0)-\sum_{k=1}^m(-1)^k {m \choose k}T(\stackrel{m-k}{\overbrace{x_0+x_1, \ldots,x_0+x_1}},\stackrel{k}{\overbrace{x_1, \ldots,x_1}})
$$
\end{lemma}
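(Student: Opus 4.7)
The cleanest strategy is to recognize the right-hand side as what one obtains by writing $x_0 = (x_0+x_1) - x_1$ inside each of the $m$ slots of $T$ and expanding by multilinearity. That is, my plan is to compute
\begin{equation*}
T(x_0,\ldots,x_0) = T\bigl((x_0+x_1)-x_1,\ldots,(x_0+x_1)-x_1\bigr)
\end{equation*}
and then use $m$-linearity to distribute the difference across all $m$ slots, obtaining a sum indexed by subsets $S\subseteq\{1,\ldots,m\}$ (the positions at which $-x_1$ is picked), each contributing a sign $(-1)^{|S|}$.

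Next I would use the symmetry hypothesis on $T$: the summand depends on $S$ only through $|S|=k$, so after grouping all $\binom{m}{k}$ terms with $|S|=k$ into one, the sum collapses to
\begin{equation*}
T(x_0,\ldots,x_0) = \sum_{k=0}^m(-1)^k\binom{m}{k}\,T\bigl(\underbrace{x_0+x_1,\ldots,x_0+x_1}_{m-k},\underbrace{x_1,\ldots,x_1}_k\bigr).
\end{equation*}
Finally, separating the $k=0$ term (which is $T(x_0+x_1,\ldots,x_0+x_1)$) and transposing the remaining terms gives exactly the claimed identity.

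I do not foresee a real obstacle here: multilinearity lets us expand a sum in one slot at a time, and a straightforward induction on $m$ (or a direct application of the multinomial expansion for multilinear maps, already implicit in the polarization identity recalled in Definition \ref{polar}) yields the subset-indexed form. The only bookkeeping point is that after expanding, one must legitimately collect terms using \emph{symmetry}, which is precisely the hypothesis on $T$. As a sanity check, if one prefers a purely combinatorial verification, one can expand $T((x_0+x_1)^{m-k},x_1^k)$ a second time to write the right-hand side as $\sum_{i=0}^m c_i\,T(x_0^{m-i},x_1^i)$ with $c_i=\sum_{k=0}^i(-1)^k\binom{m}{k}\binom{m-k}{i-k}=\binom{m}{i}(1-1)^i$, which vanishes unless $i=0$, in which case it is $1$; this recovers $T(x_0,\ldots,x_0)$, matching the claim.
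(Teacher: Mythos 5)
Your proof is correct, but it follows a genuinely different route from the paper's. You obtain the identity directly: substituting $x_0=(x_0+x_1)-x_1$ into every slot of $T(x_0,\ldots,x_0)$, expanding by multilinearity over the subsets $S\subseteq\{1,\ldots,m\}$ of slots receiving $-x_1$, and using symmetry to collapse the $\binom{m}{k}$ terms with $|S|=k$, you arrive at
\begin{equation*}
T(x_0,\ldots,x_0)=\sum_{k=0}^m(-1)^k\binom{m}{k}\,T\bigl(\underbrace{x_0+x_1,\ldots,x_0+x_1}_{m-k},\underbrace{x_1,\ldots,x_1}_{k}\bigr),
\end{equation*}
and isolating the $k=0$ term and transposing gives exactly the statement. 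The paper instead argues by induction on $m$: it splits the first slot of $T(x_0+x_1,\ldots,x_0+x_1)$ into an $x_0$-term and an $x_1$-term, applies the inductive hypothesis to each (with the first argument frozen), re-indexes the two resulting sums, and merges them via Pascal's rule $\binom{m}{k}+\binom{m}{k-1}=\binom{m+1}{k}$. Your argument is shorter and more transparent, exhibiting the lemma as the binomial expansion for symmetric multilinear maps read backwards (your closing consistency check with $\sum_{k=0}^{i}(-1)^k\binom{m}{k}\binom{m-k}{i-k}=\binom{m}{i}(1-1)^i$ makes that explicit), whereas the paper's induction only ever manipulates one slot at a time, at the cost of heavier combinatorial bookkeeping; the minor point you flag (justifying the subset-indexed expansion by a one-slot-at-a-time induction) is standard and unproblematic. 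Either proof serves the application in Theorem \ref{later} equally well.
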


\begin{proof}
We will proceed via induction on $m$. For $m=1$, the result is trivial, since then the claim is just the linearity of $T$. \\
Assume the result is true for $m$. Then,
$$
\begin{aligned}
T(\stackrel{m+1}{\overbrace{x_0+x_1, \ldots,x_0+x_1}})&=T(x_0,\stackrel{m}{\overbrace{x_0+x_1, \ldots,x_0+x_1}})+T(x_1,\stackrel{m}{\overbrace{x_0+x_1, \ldots,x_0+x_1}}) \\
&=T(x_0, \ldots,x_0)-\sum_{k=1}^m(-1)^k{m \choose k}T(x_0,\stackrel{m-k}{\overbrace{x_0+x_1, \ldots,x_0+x_1}},\stackrel{k}{\overbrace{x_1, \ldots,x_1}}) \\
&\quad+T(x_1,x_0, \ldots,x_0)-\sum_{k=1}^m(-1)^k{m \choose k}T(x_1,\stackrel{m-k}{\overbrace{x_0+x_1, \ldots,x_0+x_1}},\stackrel{k}{\overbrace{x_1, \ldots,x_1}}) \\
&=T(x_0, \ldots,x_0)-\sum_{k=1}^m(-1)^k{m \choose k}T(\stackrel{m-k+1}{\overbrace{x_0+x_1, \ldots,x_0+x_1}},\stackrel{k}{\overbrace{x_1, \ldots,x_1}}) \\
& \quad+T(x_1,x_0,\ldots,x_0) \\
&=T(x_0, \ldots,x_0)-\sum_{k=1}^m(-1)^k{m \choose k}T(\stackrel{m-k+1}{\overbrace{x_0+x_1, \ldots,x_0+x_1}},\stackrel{k}{\overbrace{x_1, \ldots,x_1}}) \\
& \quad+T(x_1,x_0+x_1,\ldots,x_0+x_1) \\
& \quad+\sum_{k=1}^m(-1)^k{m \choose k}T(\stackrel{m-k}{\overbrace{x_0+x_1, \ldots,x_0+x_1}},\stackrel{k+1}{\overbrace{x_1, \ldots,x_1}}) \\
&=T(x_0, \ldots,x_0)+(m+1)T(x_1,x_0+x_1,\ldots,x_0+x_1) \\
& \quad-\sum_{k=2}^{m}(-1)^k{m \choose k}T(\stackrel{m-k+1}{\overbrace{x_0+x_1, \ldots,x_0+x_1}},\stackrel{k}{\overbrace{x_1, \ldots,x_1}}) \\
& \quad - \sum_{k=2}^{m+1}(-1)^k{m \choose k-1}T(\stackrel{m-k+1}{\overbrace{x_0+x_1, \ldots,x_0+x_1}},\stackrel{k}{\overbrace{x_1, \ldots,x_1}}) \\
&=T(x_0, \ldots,x_0)+(m+1)T(x_1,x_0+x_1,\ldots,x_0+x_1) \\
& \quad-\sum_{k=2}^m(-1)^k\left[{m \choose k}+{m \choose k-1}\right]T(\stackrel{m-k+1}{\overbrace{x_0+x_1, \ldots,x_0+x_1}},\stackrel{k}{\overbrace{x_1, \ldots,x_1}}) \\
& \quad-(-1)^{m+1}T(\stackrel{m+1}{\overbrace{x_1,\ldots,x_1}}) \\
&=T(x_0, \ldots,x_0)-\sum_{k=1}^{m+1}(-1)^k {m+1 \choose k}T(\stackrel{m+1-k}{\overbrace{x_0+x_1, \ldots,x_0+x_1}},\stackrel{k}{\overbrace{x_1, \ldots,x_1}}).
\end{aligned}
$$

\end{proof}

\newpage

\begin{proof}[Proof of Theorem \ref{later}] \textcolor{white}{hola} \vspace{30pt}
\begin{enumerate}
\item Let $\{x_n\}_{n=1}^{\infty} \subseteq X$ be a bounded sequence. Then, we can find a subsequence $\{x_{n_k}\}_{k=1}^{\infty}$ so that $\{Px_{n_k}\}_{k=1}^{\infty} \subseteq Y_0 \cap Y_1$ is a Cauchy sequence with respect to $\|\cdot \|_{Y_0}$. Now, notice that 
$$
\begin{aligned}
\|Px_{n_k}-Px_{n_l}\|_{\theta} & \leq C \|Px_{n_k}-Px_{n_l}\|_0^{1-\theta}\|Px_{n_k}-Px_{n_l}\|_1^{\theta} \\
& \leq C\|Px_{n_k}-Px_{n_l}\|_0^{1-\theta}\left(2\|P\|_{X \rightarrow Y_1}\sup_n \|x_n\|_X^m \right)^{\theta} \xrightarrow[k,l \rightarrow \infty]\,0.
\end{aligned}
$$
\item Let us show that $P(B_{\mathbf{X}_{\theta}})$ is a relatively compact subset of $Y$. Indeed, let $\epsilon>0$. Let $t\geq 1$ so that 
$$
\|\tilde{P}\|_{(X_0+X_1) \rightarrow Y}C't^{\theta-1}\sum_{k=1}^m{m \choose k}<{\epsilon \over 2},
$$
where $m$ is the degree of $P$ and $C'>0$ is the constant given by Theorem \ref{Lions}, part \ref{part2}. Apply next relative compactness of $P(B_{X_{0}})$ to find elements $x^{(1)}, \ldots, x^{(n)} \in B_{X_0}$ so that 
$$
P(B_{X_0}) \subseteq \cup_{j=1}^nB(P(x^{(j)});{\epsilon \over 2}).
$$
Then, if $x \in B_{\mathbf{X}_{\theta}}$, we can apply the Theorem \ref{Lions}, part \ref{part2}, to obtain a decomposition $x=x_0+x_1$ with $x_0 \in X_0$, $x_1 \in X_1$, $\|x_0 \|_{X_0} \leq C't^{\theta}$ and $\|x_1\|_{X_1} \leq C't^{\theta-1}$. Choose also $1 \leq j_0 \leq n$ so that $\|P(x_0)-P(x^{(j_0)})\|_{Y} <{\epsilon \over 2}$. Then, using Lemma \ref{technical}
$$
\begin{aligned}
\|P(x)-P(x^{(j_0)})\|_{Y} &=\|P(x_0+x_1)-P(x^{(j_0)})\|_Y \\
&=\Big\|P(x_0)-\sum_{k=1}^m(-1)^k {m \choose k}\tilde{P}(\stackrel{m-k}{\overbrace{x_0+x_1, \ldots,x_0+x_1}},\stackrel{k}{\overbrace{x_1, \ldots,x_1}})-P(x^{(j_0)})\Big\|_Y \\
& \leq {\epsilon \over 2}+\|\tilde{P}\|_{X_0+X_1 \rightarrow Y}\sum_{k=1}^m{m \choose k}\|x_0+x_1\|^{m-k}_{X_0+X_1}\|x_1\|_{X_0+X_1}^{k} \\
& \leq {\epsilon \over 2}+\|\tilde{P}\|_{X_0+X_1 \rightarrow Y}\sum_{k=1}^m{m \choose k}\|x_0+x_1\|^{m-k}_{[X_0,X_1]_{\theta}}\|x_1\|_{X_1}^{k} \\
& \leq {\epsilon \over 2}+\|\tilde{P}\|_{X_0+X_1 \rightarrow Y}\sum_{k=1}^m{m \choose k}C't^{k(\theta-1)} \leq {\epsilon \over 2}+\|\tilde{P}\|_{X_0+X_1 \rightarrow Y}C't^{\theta-1}\sum_{k=1}^m{m \choose k} \\
& <\epsilon.
\end{aligned}
$$
Therefore, we can conclude that $P(B_{[X_0,X_1]_{\theta}}) \subseteq \cup_{j=1}^n B(P(x^{(j_0)});\epsilon)$.
\end{enumerate}
\end{proof}

\begin{theorem}
Let $\mathbf{X}=(X_0,X_1), \, \mathbf{Y}=(Y_0,Y_1)$ be two compatible couples of Banach spaces and let $P \in \P(^n\mathbf{X};\mathbf{Y})$ be a bounded homogeneous polynomial, so that $P_{|X_0}:X_0 \rightarrow Y_0$ is compact. Assume that we can find a family of polynomials $\{P_{\lambda}:Y_0+Y_1 \rightarrow Y_0 \cap Y_1 \}_{\lambda \in \Lambda}$ and a constant $C>0$ so that $\|P_{\lambda}\|_{Y_j,Y_j} \leq C$ (for $j=0,1$ and $\lambda \in \Lambda$) and, for every $\epsilon>0$ we can find $\lambda_{\epsilon} \in \Lambda$ so that $\|Px-P_{\lambda_{\epsilon}}x\|_{Y_0}<\epsilon$ for every $x \in B_{[X_0,X_1]_{\theta}}$. \\
Then, $P:[X_0,X_1]_{\theta} \rightarrow [Y_0,Y_1]_{\theta}$ is compact.
\end{theorem}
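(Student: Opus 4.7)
The strategy copies the template of Theorem \ref{main}: approximate $P$ in the operator norm between $[X_0,X_1]_\theta$ and $[Y_0,Y_1]_\theta$ by polynomials $Q_\lambda$ that are themselves compact between those interpolated spaces, and appeal to the closedness of the compact polynomials under operator-norm limits.

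\textbf{Construction of the approximants and their compactness.} I would take $Q_\lambda := P_\lambda \circ P$, a homogeneous polynomial of degree $n\cdot \deg(P_\lambda)$ from $X_0+X_1$ into $Y_0\cap Y_1$. Since $\|P_\lambda\|_{Y_j,Y_j}\leq C$ and $P:X_j\to Y_j$ is continuous, $Q_\lambda:X_j\to Y_0\cap Y_1$ is continuous for $j=0,1$, so the continuous inclusion $Y_0\cap Y_1 \hookrightarrow [Y_0,Y_1]_\theta$ places $Q_\lambda$ in $\mathcal{P}({}^{k}(X_0,X_1);[Y_0,Y_1]_\theta)$ with $k=n\cdot\deg P_\lambda$. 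Because $P:X_0\to Y_0$ is compact and $P_\lambda:Y_0\to Y_0\cap Y_1$ is continuous, the image $Q_\lambda(B_{X_0})$ is relatively compact in $Y_0\cap Y_1$, hence in $[Y_0,Y_1]_\theta$. Applying Theorem \ref{later}, part \ref{p2}, with $Y=[Y_0,Y_1]_\theta$ then yields that $Q_\lambda:[X_0,X_1]_\theta\to[Y_0,Y_1]_\theta$ is compact for every $\lambda$.

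\textbf{Operator-norm convergence $Q_\lambda \to P$.} Polarizing each homogeneous component of $P-Q_\lambda$ and combining Martin's polarization bound with Theorem \ref{multilinear} yields a Calder\'on-type estimate
$$\|P-Q_\lambda\|_{[X_0,X_1]_\theta\to[Y_0,Y_1]_\theta}\leq \frac{k^k}{k!}\,\|P-Q_\lambda\|_{X_0\to Y_0}^{1-\theta}\,\|P-Q_\lambda\|_{X_1\to Y_1}^{\theta}.$$
The $X_1\to Y_1$ factor stays uniformly bounded, via $\|P\|_{X_1\to Y_1}$ and $\|Q_\lambda\|_{X_1\to Y_1}\leq C\,\|P\|_{X_1\to Y_1}^{\deg P_\lambda}$. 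For the $X_0\to Y_0$ factor, the relative compactness of $P(B_{X_0})$ in $Y_0$ combined with the equi-bound on $\{P_\lambda\}$ on $Y_0$ promotes the pointwise approximation supplied by the hypothesis into uniform approximation on the relatively compact set $\overline{P(B_{X_0})}$, giving $\|P-Q_\lambda\|_{X_0\to Y_0}\to 0$. Closedness of the compact polynomials in the operator norm then produces compactness of $P:[X_0,X_1]_\theta\to[Y_0,Y_1]_\theta$.

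\textbf{Main obstacle.} The delicate point is that the approximation hypothesis is stated with $x$ ranging over $B_{[X_0,X_1]_\theta}$ rather than $B_{X_0}$, while the interpolation of operator norms asks for control on $B_{X_0}$; the bridge I propose uses compactness of $P:X_0\to Y_0$ to reduce matters to a compact subset of $Y_0$ on which the equi-bounded family $\{P_\lambda\}$ approximates the identity uniformly. A secondary technical nuisance is that $P$ and $Q_\lambda$ typically carry different degrees, so that $P-Q_\lambda$ is not homogeneous of a single degree and the interpolated operator-norm estimate has to be carried out component by component.
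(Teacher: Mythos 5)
Your overall route is exactly the paper's: the same approximants $Q_\lambda = P_\lambda \circ P$ mapping into $Y_0 \cap Y_1 \hookrightarrow [Y_0,Y_1]_\theta$, compactness of each $Q_\lambda:[X_0,X_1]_\theta \to [Y_0,Y_1]_\theta$ via Theorem \ref{later}, part \ref{p2}, and then an operator-norm estimate for $P-Q_\lambda$ between the interpolated spaces obtained from polarization together with Theorem \ref{multilinear}, with the $X_1\to Y_1$ factor merely kept bounded. Up to that point the two arguments coincide.

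The genuine gap is your ``bridge''. The hypothesis (read, as it must be, with $P_{\lambda_\epsilon}(Px)$ in place of $P_{\lambda_\epsilon}x$) constrains $P_\lambda$ only on the set $P(B_{[X_0,X_1]_\theta})$ and says nothing about its behaviour on $\overline{P(B_{X_0})}$: the balls $B_{X_0}$ and $B_{[X_0,X_1]_\theta}$ are not comparable (neither contains the other; in general $X_0 \not\subseteq [X_0,X_1]_\theta$), so points of $\overline{P(B_{X_0})}$ need not lie near any point at which the hypothesis applies. Relative compactness of $P(B_{X_0})$ in $Y_0$ together with the equi-bound $\|P_\lambda\|_{Y_0,Y_0}\le C$ cannot repair this: there is no pointwise approximation on $\overline{P(B_{X_0})}$ to ``promote'' in the first place, and in any case a uniformly bounded family of polynomials of unrestricted degrees is not equicontinuous on balls of radius larger than one, so proximity of the two sets would not transfer the estimate either. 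The paper never attempts such a bridge: its proof applies the hypothesis directly to $x \in B_{X_0}$, i.e.\ it reads the assumption as uniform approximation of the identity on $P(B_{X_0})$ (the ``$B_{[X_0,X_1]_\theta}$'' in the statement is evidently a slip), and under that reading your argument collapses into the paper's and the extra step is unnecessary. One further remark: your point that $P-Q_\lambda$ is not $n$-homogeneous is well taken --- the paper applies Martin's constant $n^n/n!$ as if it were --- but your component-by-component fix yields a constant that grows with $\deg P_{\lambda_\epsilon}$ (roughly $\sum_{j\le n\deg P_{\lambda_\epsilon}} j^j/j!$), so the passage to the limit $\epsilon\to 0$ is only justified when the degrees of the $P_\lambda$ are bounded, as in the intended applications where the $P_\lambda$ are the linear projections of Theorem \ref{main}.
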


\begin{proof}
Given $\lambda \in \Lambda$, define the homogeneous polynomial $Q_{\lambda}=P_{\lambda} \circ P: X_0+X_1 \rightarrow Y_0 \cap Y_1 \hookrightarrow [Y_0,Y_1]_{\theta}$. Then, in particular, $Q_{\lambda}:X_0 \rightarrow [Y_0,Y_1]_{\theta}$ is compact and, applying Theorem \ref{later}, part \ref{p2}, we get that $Q_{\lambda}:[X_0,X_1]_{\theta} \rightarrow [Y_0,Y_1]_{\theta}$ is compact. \\
Let us show that we can approximate $P$ by $\{Q_{\lambda}\}_{\lambda \in \Lambda}$, in the uniform norm. Indeed, let $\epsilon>0$ and let $C>0$ be the constant given by the hypothesis. We can then find $\lambda_0 \in \Lambda$ so that
$$
\|Px-P_{\lambda_0}Px\|_{Y_0} \leq \epsilon,
$$
for every $x \in B_{X_0}$. Then, using Theorem \ref{multilinear}
$$
\begin{aligned}
\|P-Q_{\lambda_0}\|_{[X_0,X_1]_{\theta} \rightarrow [Y_0,Y_1]_{\theta}} & \leq {n^n \over n!}\|P-Q_{\lambda_0}\|_{X_0 \rightarrow Y_0}^{1-\theta}\|P-Q_{\lambda_0}\|_{X_1 \rightarrow Y_1}^{\theta}<{n^n \over n!} \epsilon^{1-\theta}(1+C)^{\theta}\|P\|_{X_1 \rightarrow Y_1}.
\end{aligned}
$$
\end{proof}

{\bf Acknowledgements} The present paper was completed while the author was completing his Ph.D. in Kent State University. The author would also like to thank Professor M. Cwikel for his selfless help in filling the details of his results in \cite{CwiKal}. The author is specially grateful to Professor Richard M. Aron for proposing this topic to him and his advice and guidance throughout its contents.


\end{document}